\DeclarePairedDelimiter\ceil{\lceil}{\rceil}
\author{Tuomas Orponen and Pablo Shmerkin}
\title[Radial projections]{On exceptional sets of radial projections}
\address{Department of Mathematics and Statistics\\ University of Jyv\"askyl\"a,
P.O. Box 35 (MaD)\\
FI-40014 University of Jyv\"askyl\"a\\
Finland} \email{tuomas.t.orponen@jyu.fi}
\address{Department of Mathematics\\The University of British Columbia,
1984 Mathematics Road\\
Vancouver, BC, V6T 1Z2\\
Canada } \email{pshmerkin@math.ubc.ca}
\date{\today}
\subjclass[2010]{28A80 (primary) 28A78 (secondary)}
\keywords{Radial projections, Hausdorff dimension, Exceptional sets}
\thanks{T.O. is
 supported by the Academy of Finland via the projects
\emph{Quantitative rectifiability in Euclidean and non-Euclidean
spaces} and \emph{Incidences on Fractals}, grant Nos. 309365,
314172, 321896. }
\thanks{P.S. is supported by an NSERC Discovery Grant}
\newcommand{\R}{\mathbb{R}}
\newcommand{\N}{\mathbb{N}}
\newcommand{\spt}{\operatorname{spt}}
\newcommand{\Hd}{\dim_{\mathrm{H}}}
\newcommand{\diam}{\operatorname{diam}}
\newcommand{\dist}{\operatorname{dist}}
\def\Barint_#1{\mathchoice
          {\mathop{\vrule width 6pt height 3 pt depth -2.5pt
                  \kern -8pt \intop}\nolimits_{#1}}%
          {\mathop{\vrule width 5pt height 3 pt depth -2.6pt
                  \kern -6pt \intop}\nolimits_{#1}}%
          {\mathop{\vrule width 5pt height 3 pt depth -2.6pt
                  \kern -6pt \intop}\nolimits_{#1}}%
          {\mathop{\vrule width 5pt height 3 pt depth -2.6pt
                  \kern -6pt \intop}\nolimits_{#1}}}
\numberwithin{equation}{section}
\theoremstyle{plain}
\newtheorem{thm}[equation]{Theorem}
\newtheorem*{"thm"}{"Theorem"}
\newtheorem{lemma}[equation]{Lemma}
\newtheorem{cor}[equation]{Corollary}
\newtheorem{proposition}[equation]{Proposition}
\newtheorem{question}{Question}
\newtheorem{claim}[equation]{Claim}
\theoremstyle{definition}
\newtheorem{definition}[equation]{Definition}
\theoremstyle{remark}
\newtheorem{remark}[equation]{Remark}
\newcommand{\nref}[1]{(\hyperref[#1]{#1})}
\DeclareMathSymbol{\intop}  {\mathop}{mathx}{"B3}
\begin{document}

\begin{abstract} We prove two new exceptional set estimates for radial projections in the plane. If $K \subset \mathbb{R}^{2}$ is a Borel set with $\dim_{\mathrm{H}} K > 1$, then
\begin{displaymath} \dim_{\mathrm{H}} \{x \in \mathbb{R}^{2} \, \setminus \, K : \dim_{\mathrm{H}} \pi_{x}(K) \leq \sigma\} \leq \max\{1 + \sigma - \dim_{\mathrm{H}} K,0\}, \qquad \sigma \in [0,1). \end{displaymath}
If $K \subset \R^{2}$ is a Borel set with $\Hd K \leq 1$, then
\begin{displaymath} \dim_{\mathrm{H}} \{x \in \mathbb{R}^{2} \, \setminus \, K : \dim_{\mathrm{H}} \pi_{x}(K) < \dim_{\mathrm{H}} K\} \leq 1. \end{displaymath}
The finite field counterparts of both results above were recently proven by Lund, Thang, and Huong Thu. Our results resolve the planar cases of conjectures of Lund-Thang-Huong Thu, and Liu.
\end{abstract}

\maketitle

\tableofcontents

\section{Introduction}

It is a popular topic in fractal geometry to study the distortion of dimension under various projection maps. Perhaps the most basic family of projections consists of the orthogonal projections $x \mapsto \pi_{e}(x) := x \cdot e$ for $e \in S^{1}$. Another vigorously investigated family consists of the \emph{radial projections}. For $x \in \R^{2}$, the \emph{radial projection to $x$} is the map $\pi_{x} \colon \R^{2} \, \setminus \, \{x\} \to S^{1}$ defined by
\begin{displaymath} \pi_{x}(y) := \frac{y - x}{|y - x|}, \qquad y \in \R^{2} \, \setminus \, \{x\}. \end{displaymath}
Orthogonal and radial projections have higher-dimensional counterparts, but the discussion in this paper is mostly restricted to the plane. Regardless of the specific projection family under consideration, one often asks the following question: given a Borel set $K \subset \R^{2}$, how many elements in the projection family can lower the (Hausdorff) dimension of $K$ significantly? For orthogonal projections, we have the following three classic estimates for a Borel set $K \subset \R^{2}$ with $\Hd K = t$:

\begin{align}
\label{kaufman} \Hd \{e \in S^{1} : \Hd \pi_{e}(K) \leq \sigma\} \leq \sigma, &\qquad 0 \leq \sigma < \min\{t,1\},\\
\label{falconer} \Hd \{e \in S^{1} : \mathcal{H}^{1}(\pi_{e}(K)) = 0\} \leq 2 - t, &\qquad t > 1,
\end{align}
and
\begin{equation}
\label{peresSchlag} \Hd \{e \in S^{1} : \Hd \pi_{e}(K) \leq \sigma\} \leq \max\{1 + \sigma - t,0\}, \quad 0 \leq \sigma < \min\{t,1\}.
\end{equation}

Estimate \eqref{kaufman} is due to Kaufman \cite{Ka} from 1968, and it can be established with combinatorial or potential theoretic methods. The esimate \eqref{falconer} is due to Falconer \cite{MR673510} from 1982, and is based on Fourier analysis. A purely combinatorial proof is missing even today, as far as we know. The estimate \eqref{peresSchlag} is due to Peres and Schlag \cite{MR1749437} from 2000. It is also based on Fourier analysis, and can be viewed as a refinement of \eqref{falconer}. All the proofs of \eqref{kaufman}-\eqref{peresSchlag} can be conveniently read from Mattila's book \cite[Chapter 5]{MR3617376}.

Results on orthogonal projections always have corollaries to radial projections relative to points on a fixed line. Namely, if $\ell \subset \R^{2}$ is a line, then there exists a projective transformation $F_{\ell}$ with the property $\pi_{x}(K) = \pi_{e(x)}(F_{\ell}(K))$ for all $x \in \ell$ and $K \subset \R^{2} \, \setminus \, \ell$, where the map $x \mapsto e(x) \in S^{1}$ is locally bilipschitz. For more details, see \cite[Section 4]{MR3503710}. In particular, all the estimates \eqref{kaufman}-\eqref{peresSchlag} hold as stated if "$e \in S^{1}$" is replaced by "$x \in \ell$". Concretely, for example, Falconer's estimate \eqref{falconer} yields
\begin{equation}\label{radialFalconer} \Hd \{x \in \ell \, \setminus \, K : \mathcal{H}^{1}(\pi_{x}(K)) = 0\} \leq 2 - \Hd K, \qquad \Hd K > 1. \end{equation}
It is well-known (see \cite[p.114]{MR673510}) that Falconer's estimate \eqref{falconer} is sharp, and so is \eqref{radialFalconer}. Indeed, any sharpness example for \eqref{falconer} can be transformed into a sharpness example for \eqref{radialFalconer} using the map $F_{\ell}$. Given these facts, one might expect the following "global" estimate for radial projections to be optimal:
\begin{displaymath}
\Hd \{x \in \R^{2} \, \setminus \, K : \mathcal{H}^{1}(\pi_{x}(K)) = 0\} \leq 3 - \Hd K, \qquad \Hd K > 1.
\end{displaymath}
This estimate heuristically follows from \eqref{radialFalconer} by foliating $\R^{2}$ with parallel lines. Curiously, the intuition is false: in fact, the sharp "global" estimate for radial projections rather matches the numerology of \eqref{falconer} or \eqref{radialFalconer}:
\begin{equation}\label{orponen} \Hd \{x \in \R^{2} \, \setminus \, K : \mathcal{H}^{1}(\pi_{x}(K)) = 0\} \leq 2 - \Hd K, \qquad \Hd K > 1. \end{equation}
The estimate \eqref{orponen} was established in \cite{MR3778538}, but is based on the ideas in \cite{MR3503710}.

One might then guess that also the bounds \eqref{kaufman} and \eqref{peresSchlag} hold as stated for radial projections, just replacing "$e \in S^{1}$" by "$x \in \R^{2} \, \setminus \, K$". This is not the case: if $K \subset \R^{2}$ is compact and lies on a line $\ell \subset \R^{2}$, then the set $\Hd \{x \in \R^{2} \, \setminus \, K : \Hd \pi_{x}(K) = 0\}$ contains $\ell \, \setminus \, K$, and in particular has dimension $1$. This problem shows that neither \eqref{kaufman} nor \eqref{peresSchlag} can hold "globally" for radial projections if $\Hd K \leq 1$.

If $\Hd K > 1$, the counterexample $K \subset \ell$ is no longer possible. In this regime, the Peres-Schlag bound \eqref{peresSchlag} is always stronger than Kaufman's estimate \eqref{kaufman}. The first result of this paper shows that the analogue of \eqref{peresSchlag} holds "globally" for radial projections:

\begin{thm}\label{main} Let $K \subset \R^{2}$ be a Borel set with $\Hd K > 1$. Then,
\begin{displaymath} \Hd \{x \in \R^{2} \, \setminus \, K : \Hd \pi_{x}(K) \leq \sigma\} \leq \max\{1 + \sigma - \Hd K,0\}, \qquad 0 \leq \sigma < 1. \end{displaymath}
\end{thm}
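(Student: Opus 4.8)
\medskip

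The plan is to reduce the global statement to a fixed-line statement of the type \eqref{peresSchlag}, but \emph{not} by foliating with parallel lines (which, as the discussion above explains, gives the wrong numerology). Instead I would argue by contradiction, following the strategy of \cite{MR3503710} and \cite{MR3778538}: suppose the exceptional set
\[
E := \{x \in \R^{2} \setminus K : \Hd \pi_{x}(K) \leq \sigma\}
\]
has $\Hd E = s > \max\{1 + \sigma - \Hd K, 0\}$. The key structural dichotomy is whether or not $E$ is essentially contained in a line. If $E$ is contained in a line $\ell$, then the projective transformation $F_{\ell}$ discussed in the introduction converts the radial projections $\{\pi_x\}_{x \in \ell}$ into orthogonal projections of $F_\ell(K)$, and the classical Peres--Schlag bound \eqref{peresSchlag} (transferred to the line $\ell$) applies directly to give $s \le \max\{1+\sigma - \Hd K, 0\}$, a contradiction. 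So the substance of the proof is the case where $E$ is \emph{not} concentrated on any line.

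\medskip

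In that case I would first pass to a subset: choose $\mu$ a Frostman measure on $E$ of exponent close to $s$, and $\nu$ a Frostman measure on $K$ of exponent close to $t := \Hd K$. Since $E$ is not contained in a line, one can find a set of positive $\mu$-measure of points $x$ such that $K$ is not contained in any line through $x$ — in fact one wants the stronger statement that, for $\mu$-a.e.\ $x$, the projection $\pi_x$ sends $\nu$ to a measure on $S^1$ that is not atomic and in fact has dimension $\le \sigma$ by hypothesis. The heart of the matter is an incidence/energy estimate: I would set up the bilinear expression $\iint |\pi_x(y) - \pi_x(z)|^{-u}\, d\nu(y)\, d\nu(z)$, integrate it against $d\mu(x)$, and exploit the Jacobian identity $|\pi_x'(y)| \sim |x-y|^{-1}$ together with the parametrization $\pi_x(y) - \pi_x(z)$ to relate this triple integral to a Riesz energy of $\mu$ plus $\nu$ in the plane. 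The dimension hypotheses $s > 1 + \sigma - t$ (equivalently $t + s > 1 + \sigma$) should make the relevant integral finite for a suitable exponent $u \in (\sigma, 1)$, forcing $\Hd \pi_x(K) \ge u > \sigma$ for $\mu$-a.e.\ $x$ — contradicting the definition of $E$.

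\medskip

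Concretely, the mechanism I expect to use is a radial-projection analogue of the Peres--Schlag transversality machinery, or alternatively the point-line incidence approach: discretize at scale $\delta$, let $\calT$ be the $\delta$-tubes through a $\delta$-separated subset of $E$ covering $K$ with few directions, and run a high–low or bush/hairbrush argument to bound the number of incidences between $\delta$-balls in $K$ and these tubes. The bound $s + t > 1 + \sigma$ is exactly the threshold at which the incidence count is too small to be consistent with every point of $E$ seeing $K$ in $\lesssim \delta^{-\sigma}$ directions. I would also need the elementary observation that if $\Hd K > 1$ then $\nu$ can be taken with exponent $> 1$, which is what rules out the degenerate line configurations and lets the incidence estimate close with the global (rather than fixed-line) exponent.

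\medskip

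The main obstacle, I expect, is precisely handling the transition between the ``$E$ in a line'' and ``$E$ not in a line'' cases in a \emph{quantitative} way: it is not enough to know $E$ is not contained in a single line; one needs that $E$ does not concentrate near a line at every scale, since a set of dimension $s$ slightly above $1$ could still be extremely close to a line and thereby mimic the fixed-line numerology locally. Resolving this likely requires either a pigeonholing over scales combined with a ``non-concentration on lines'' decomposition of $\mu$, or invoking a projection theorem for the family $x \mapsto$ (direction of $x$ from a fixed point) to upgrade the qualitative hypothesis $\Hd K > 1$ into usable quantitative transversality. A secondary technical point is the sharpness of the exponent $u$: to get $\max\{1+\sigma-t,0\}$ rather than a lossy bound one must be careful that the energy integral converges for \emph{every} $u < 1$ with $u > 1 + \sigma - t - s'$ as $s' \uparrow s$, i.e.\ no $\varepsilon$-losses may accumulate, which is where the Fourier-analytic Peres--Schlag input (as opposed to a purely combinatorial incidence bound) may ultimately be needed.
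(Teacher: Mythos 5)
Your proposal contains a genuine gap at its quantitative core. The energy mechanism you describe --- integrating $|\pi_{x}(y)-\pi_{x}(z)|^{-u}$ against $d\nu(y)\,d\nu(z)\,d\mu(x)$ and hoping that $s+t>1+\sigma$ makes it converge --- does not close. When you fix $y,z\in K$ and integrate in $x$, the sublevel set $\{x:|\pi_x(y)-\pi_x(z)|\le\rho\}$ is an angular bowtie of width $\rho$ around the line $\ell_{y,z}$, and the Frostman condition $\mu(B(x,r))\le r^{s}$ with $s\le 1$ gives no decay at all for $\mu$ of such a region (and for $s>1$ only $\rho^{s-1}$, which produces thresholds involving $s$ alone or the weaker $2+\sigma-t$ numerology of Peres--Schlag transversality over a two-dimensional parameter space --- exactly the ``foliation'' numerology the introduction warns against). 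Your ``main obstacle'' paragraph correctly identifies this line-concentration issue but offers no mechanism to resolve it; the qualitative dichotomy ``$E$ in a line / not in a line'' cannot be upgraded to the scale-by-scale non-concentration you would need, and no amount of care with the exponent $u$ fixes a divergent integral. Likewise, your fallback suggestion that a ``high--low or bush/hairbrush'' incidence count makes $s+t>1+\sigma$ the critical threshold is not substantiated: elementary incidence arguments do not reach that exponent, which is precisely why the problem was open.

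What the actual proof needs, and what is missing from your sketch, are two specific quantitative inputs. First, after discretising (Proposition \ref{prop2}), one must show that the union $\bigcup_{x}\mathcal{T}_{x}$ of the tube families through a $(\delta,s)$-set of vantage points is itself a large $(\delta,\gamma)$-set of tubes with $\gamma=\sigma+\min\{s,\sigma\}$; this is a dual Furstenberg-set estimate (Lemmas \ref{lemma4}--\ref{lemma5}, via point--line duality and the Lutz--Stull/H\'era--Shmerkin--Yavicoli bound), and nothing in your outline plays this role --- without it, the tube families for different $x$ could largely coincide and an incidence count forces nothing. Second, the incidence bound that is then compared against the trivial lower bound $|\mathcal{I}|\ge\delta^{\sigma+\epsilon}|P_{K}||\mathcal{T}|$ is the Fu--Ren theorem (Theorem \ref{FuRen}), whose exponent $\tfrac{1}{2}(s+t-1)$ in the relevant regime rests on the Fourier-analytic work of Guth--Solomon--Wang \cite{GSW19,2021arXiv211105093F}; it is exactly this nontrivial exponent, combined with $\gamma=\sigma+\min\{s,\sigma\}$, that yields $s\le 1+\sigma-t$. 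Your concern about accumulating $\epsilon$-losses is, incidentally, a non-issue: the paper tolerates $O(\epsilon)$ losses at every step and removes them at the end by pigeonholing over scales and letting $\epsilon\to 0$, so no loss-free machinery is required at that stage --- the Fourier input is needed for the incidence exponent itself, not for sharpness bookkeeping.
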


The second result deals with sets of dimension at most one:
\begin{thm}\label{main3} Let $K \subset \R^{2}$ be a Borel set with $\Hd K \leq 1$. Then,
\begin{displaymath} \Hd \{x \in \R^{2} \, \setminus \, K : \Hd \pi_{x}(K) < \Hd K\} \leq 1. \end{displaymath}
\end{thm}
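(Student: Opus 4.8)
Write $s := \Hd K$ and $E := \{x \in \R^{2} \setminus K : \Hd \pi_{x}(K) < s\}$; we may assume $s > 0$, since otherwise $E = \emptyset$. The plan is to argue by contradiction, so assume $\Hd E > 1$. Since $E = \bigcup_{\sigma \in \Q \cap [0,s)} E_{\sigma}$ with $E_{\sigma} := \{x \in \R^{2} \setminus K : \Hd \pi_{x}(K) \leq \sigma\}$, some rational $\sigma \in [0,s)$ satisfies $\Hd E_{\sigma} > 1$; fix it, and fix compact sets $K_{0} \subseteq K$ and $E_{0} \subseteq E_{\sigma}$ with $\Hd K_{0} > \sigma$, $\Hd E_{0} > 1$, and $K_{0} \cap E_{0} = \emptyset$ (the last because $E_{\sigma} \cap K = \emptyset$). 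I would first dispose of the configurations involving a line. If some Borel subset of $K$ of dimension exceeding $\sigma$ lies on a line $\ell$, then, since $\pi_{x}|_{\ell}$ is locally bi-Lipschitz whenever $x \notin \ell$, one gets $\Hd \pi_{x}(K) > \sigma$ for every $x \notin \ell$, forcing $E_{\sigma} \subseteq \ell$ and hence $\Hd E_{\sigma} \leq 1$ --- a contradiction. Thus $K$ has no such subset, and in particular $\Hd(K \setminus \ell) > \sigma$ for every line $\ell$. Likewise, if $E_{0} \subseteq \ell$ for some line $\ell$, then applying the radial form of Kaufman's estimate \eqref{kaufman} to $K \setminus \ell \subseteq \R^{2} \setminus \ell$ (legitimate, as $\sigma < \min\{\Hd(K \setminus \ell), 1\}$) gives $\Hd \pi_{x}(K) \geq \Hd \pi_{x}(K \setminus \ell) > \sigma$ for all $x \in \ell$ outside a set of dimension $\leq \sigma$, whence $\Hd E_{0} \leq \sigma < 1$ --- again a contradiction. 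So we may assume in addition that $E_{0}$ is not contained in a line.

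The core of the argument is a potential-theoretic incidence estimate. Let $\mu$ be a Frostman measure on $K_{0}$ of exponent $s' \in (\sigma, \Hd K_{0})$, let $\nu$ be a Frostman measure on $E_{0}$ of exponent $t' \in (1, \Hd E_{0})$, and fix $\tau \in (\sigma, s')$. Since $E_{0}$ and $K_{0}$ are compact and disjoint, $\pi_{x}$ is uniformly Lipschitz on $K_{0}$ for $x \in E_{0}$, so $\pi_{x\#}\mu$ is a finite measure on $\pi_{x}(K_{0})$; the aim is to bound the averaged energy $\int I_{\tau}(\pi_{x\#}\mu)\, d\nu(x)$, where $I_{\tau}(\lambda) = \iint |e - e'|^{-\tau}\, d\lambda(e)\, d\lambda(e')$ is the $\tau$-energy. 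From the elementary inequality $|\pi_{x}(y) - \pi_{x}(z)| \geq |y - z|\,\dist(x,\ell_{y,z}) / (|x - y|\,|x - z|)$, where $\ell_{y,z}$ is the line through $y$ and $z$, together with the boundedness of the supports, Tonelli's theorem yields
\begin{displaymath} \int I_{\tau}(\pi_{x\#}\mu)\, d\nu(x) \lesssim \iint \frac{1}{|y - z|^{\tau}} \left( \int \frac{d\nu(x)}{\dist(x,\ell_{y,z})^{\tau}} \right) d\mu(y)\, d\mu(z). \end{displaymath}
If the right-hand side is finite, then $I_{\tau}(\pi_{x\#}\mu) < \infty$ for $\nu$-almost every $x$, hence $\Hd \pi_{x}(K) \geq \Hd \pi_{x}(K_{0}) \geq \tau > \sigma$ for such $x$; since $\nu(E_{0}) > 0$, this contradicts $E_{0} \subseteq E_{\sigma}$ and completes the proof.

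Everything therefore reduces to the finiteness of the displayed triple integral. The factor $\iint |y - z|^{-\tau}\, d\mu(y)\, d\mu(z)$ is finite because $\tau < s'$ and $\mu$ is $s'$-Frostman on a bounded set, so the remaining issue is a uniform bound $\int \dist(x,\ell)^{-\tau}\, d\nu(x) \lesssim 1$ over all lines $\ell$, equivalently a uniform tube estimate $\nu(\{x : \dist(x,\ell) \leq r\}) \lesssim r^{\alpha}$ with some $\alpha > \tau$. This is where the main obstacle lies: a measure of Frostman exponent only slightly above $1$ need not obey any such uniform tube bound with $\alpha > \sigma$, even when it assigns zero mass to every line, because $E_{0}$ may still concentrate inside a thin neighbourhood of a line at some intermediate scale. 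I would resolve this by a multiscale dichotomy, in the spirit of the treatment of analogous concentration phenomena in \cite{MR3503710} and \cite{MR3778538}: either $\nu$ does satisfy a uniform tube estimate with $\alpha$ as close to $1$ as we wish --- in which case $\tau$ can be chosen with $\sigma < \tau < \min\{s',\alpha\}$ (possible since $\sigma < s'$), the triple integral converges, and we are done --- or, at some scale $\delta > 0$, a fraction $\gg \delta^{\alpha}$ of $\nu$ sits in the $\delta$-neighbourhood $T$ of a line $\ell$; then $E_{0} \cap T$ still carries, by the mass distribution principle, a measure of Frostman exponent $t' > 1$, hence has dimension $> 1$, and rescaling $T$ to unit size (a dimension-preserving affine map) returns us to a set of the same kind, now with $\mu$ inheriting a Frostman property of exponent still above $\sigma$ on the corresponding rescaled piece of $K_{0}$; iterating, the base case --- honest containment of the relevant piece of $E_{0}$ in a line --- is precisely the configuration excluded in the first step. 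Setting up the correct stopping time and keeping the exponents and scales under control through this induction is the technically demanding part, and the place where I expect the real difficulty to lie; it is conceivable that Theorem \ref{main}, applied to a sub-piece of $E_{0}$ of dimension exceeding $1$, can be used to shortcut part of it.
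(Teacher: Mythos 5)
Your reduction to the case where $K$ has no $\sigma$-dimensional subset on a line and $E_{0}$ is not contained in a line is clean and correct, and the set-up with the averaged energy $\int I_{\tau}(\pi_{x\#}\mu)\,d\nu(x)$ is a reasonable first attempt. However, the heart of your argument --- the ``multiscale dichotomy'' meant to produce a uniform tube bound $\nu(\{\dist(\cdot,\ell)\le r\})\lesssim r^{\alpha}$ with $\alpha>\sigma$ --- is exactly where the genuine difficulty lives, and as sketched it does not close.

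Two concrete problems. First, for a Frostman exponent $t'$ just above $1$, the trivial tube bound on $\nu$ is only $r^{t'-1}$, and $t'-1$ can be far below $\sigma$ (take $\sigma$ close to $1$ and $\Hd E_{0}$ close to $1$). Second, the proposed induction does not terminate at the excluded configuration ``$E_{0}$ lies on a line''. A set such as $E_{0}=A\times B$ with $\Hd A=\varepsilon$, $\Hd B=1$ has $\Hd E_{0}=1+\varepsilon>1$, lies on no line, assigns no mass to any line, and still has $\nu(\{\dist(\cdot,\ell)\le r\})\gtrsim r^{\varepsilon}$ for vertical $\ell$ at \emph{every} scale $r$; rescaling a vertical tube to unit width just reproduces a set of the same shape, so your stopping time never fires and the base case is never reached. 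Moreover the affine rescaling is strongly non-conformal and does not transform the radial projections in a controllable way, so ``inheriting a Frostman property of exponent still above $\sigma$'' on the rescaled piece of $K_{0}$ is not a free step either. Because the energy argument treats $\mu$ and $\nu$ through separate, unconditional bounds, it is essentially blind to the bilinear interaction between the two sets, and this is precisely the information needed: the bad lines $\ell_{y,z}$ on which $\nu$ concentrates have to be rare among pairs $(y,z)\in K_{0}\times K_{0}$, not absent.

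The paper takes a different route entirely. It proves a $\delta$-discretised form of Theorem \ref{main} (Proposition \ref{prop4}) and then runs Liu's ``swapping trick'': for each $x$ in a $\delta$-discretised $E$ with parameter $t>1$, one records the tubes through $x$ that see many points of $K$; if $\pi_{x}(K)$ were small for too many $x$, a double count of point--tube incidences (ultimately resting on the Fu--Ren incidence bound for Furstenberg-type configurations) contradicts Proposition \ref{prop4} applied with the roles of the higher- and lower-dimensional sets interchanged. Your closing guess --- that Theorem \ref{main} ``applied to a sub-piece of $E_{0}$ of dimension exceeding $1$'' might be the shortcut --- is indeed the right mechanism, but carrying it out requires the discretised statement and the incidence bookkeeping of Section \ref{s:secondMain}, not just the qualitative theorem, and replaces rather than repairs the energy-plus-rescaling argument.
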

The finite field counterparts of Theorems \ref{main}-\ref{main3} (in all dimensions) were recently established by Lund, Thang, and Huong Thu \cite{2022arXiv220507431L}. Theorem \ref{main} resolves the planar case of \cite[Conjecture 1.2]{2022arXiv220507431L}. Theorem \ref{main3} resolves the planar case of \cite[Conjecture 1.2]{MR4269398}, and is sharp in cases where $K \subset \ell$ for some fixed line $\ell$, or more generally if $\Hd (\ell \, \setminus \, K) < \Hd K$. In contrast, it seems unlikely that Theorem \ref{main} is sharp for any fixed value $\sigma < 1$. However, it recovers the sharp estimate $\Hd \{x \in \R^{2} \, \setminus \, K : \Hd \pi_{x}(K) < 1\} \leq 2 - \Hd K$ as $\sigma \nearrow 1$.

The proof of Theorem \ref{main} is based on a reduction to a recent incidence estimate of Fu and Ren \cite{2021arXiv211105093F}, combined with some elementary estimates on \emph{Furstenberg sets} (these will be discussed later). The proof of Fu and Ren, further, involves a Fourier-analytic component, due to Guth, Solomon, and Wang \cite{GSW19}. So, while this paper contains no Fourier transforms, they play a role in the proof of Theorem \ref{main}.

Perhaps surprisingly, Theorem \ref{main3} is a corollary of Theorem \ref{main}. Its proof is based on a "swapping trick" introduced by Bochen Liu \cite{MR4269398}. Informally speaking, Liu's method is the following (see \cite[p. 9]{MR4269398}). Let $s,\sigma,t \geq 0$, and assume that $\Hd \pi_{x}(E) \geq \sigma$ for all Borel set $E \subset \R^{2}$ with $\Hd E = t$, outside an $s$-dimensional exceptional set. Then, $\Hd \pi_{x}(K) \geq 1 + \sigma - t$ for all Borel sets $K \subset \R^{2}$ with $\Hd K = s$, outside a $t$-dimensional exceptional set. We emphasise that this is only a heuristic principle!

Liu \cite[Theorem 1.1]{MR4269398} originally used his idea to prove a weaker version of Theorem \ref{main3}, where the number "$1$" on the right hand side is replaced by "$2 - \Hd K$". This result follows from \eqref{orponen} combined with Liu's method applied with $\sigma = 1$, $s = \Hd K$, and $t = 2 - s$. With Theorem \ref{main} in hand, we may instead apply the method with $\sigma = s = \Hd K$ and $t > 1$ arbitrarily close to "$1$". This explanation is grossly oversimplified: Liu \cite{MR4269398} used the proof of \eqref{orponen}, and not just the statement. We will need a $\delta$-discretised version of Theorem \ref{main}, recorded as Proposition \ref{prop4}.

\subsection{Sharper results for sets avoiding lines?} As we discussed earlier, Theorem \ref{main3} is sharp without further assumptions: if $K \subset \R^{2}$ is a compact, and lies on a line, then $\Hd \{x \in \R^{2} \, \setminus \, K : \Hd \pi_{x}(K) = 0\} = 1$. One may, then, ask what happens if the possibility of $K$ lying on a line is excluded. We are not sure what is the optimal formulation of the problem, but here is one possibility for concreteness:
\begin{question} Let $K \subset \R^{2}$ be a Borel set with $\Hd K \leq 1$, and assume that $\Hd (K \, \setminus \, \ell) = \Hd K$ for all lines $\ell \subset \R^{2}$. Is $\Hd \{x \in \R^{2} \, \setminus \, K : \Hd \pi_{x}(K) < \Hd K\} \leq \Hd K$?
\end{question}

The strongest current partial results in this direction are due to the second author and Wang \cite[Section 1.2]{2021arXiv211209044S}. For example, \cite[Theorem 1.7]{2021arXiv211209044S} states that if $K \subset \R^{2}$ is a Borel set with $\Hd K = 1$, and $K$ does not lie on a line, then there exists $x \in K$ such that $\Hd \pi_{x}(K \, \setminus \, \{x\}) \geq 0.618...$. An earlier result of the second author \cite[Theorem 6.13]{Shmerkin20} has similar flavour: if $K,E \subset \R^{2}$ are Borel sets of positive dimension, and $E$ is not contained on a line, then there exists $x \in E$ such that $\Hd \pi_{x}(K \, \setminus \, \{x\}) \geq \tfrac{1}{2} \Hd K + \eta$. Here $\eta$ only depends on $\Hd K$ and $\Hd E$. The same result with $\eta = 0$ was proven earlier by the first author in \cite{MR3892404}.

\subsection{Paper outline} Theorem \ref{main} is proved in Section \ref{deltaReduction} (where it is reduced to a $\delta$-discretised problem), and \ref{s:discrete} (where the $\delta$-discretised problem is solved by applying the incidence theorem of Fu and Ren \cite{2021arXiv211105093F}). Theorem \ref{main3} is proved in Section \ref{s:secondMain}.

\subsection{Notation} The notation $B(x,r)$ stands for a closed ball of radius $r > 0$ and centre $x \in X$, in a metric space $(X,d)$. If $A \subset X$ is a bounded set, and $r > 0$, we write $|A|_{r}$ for
the $r$-covering number of $A$, that is, the smallest number of
closed balls of radius $r$ required to cover $A$. Cardinality is
denoted $|A|$, and Lebesgue measure $\mathrm{Leb}(A)$. The closed
$r$-neighbourhood of $A$ is denoted $A(r)$. The (standard) dyadic
sub-cubes of $[0,1)^{d}$ of side-length $\delta \in 2^{-\N}$ will
be denoted $\mathcal{D}_{\delta}$. If $X,Y$ are positive numbers, then $X\lesssim Y$ means that $X\le CY$ for some constant $C$, while $X\gtrsim Y$, $X\sim Y$ stand for $Y\lesssim X$, $X\lesssim Y\lesssim X$. If the implicit constant $C$ depends on a parameter this will be mentioned explicitly or denoted by a subscript.

\subsection*{Acknowledgements} Some ideas for this paper were conceived while the authors were visiting the Hausdorff Research Institute for Mathematics, Bonn, during the trimester \emph{Interactions between Geometric measure theory, Singular integrals, and PDE.} We would like to thank the institute and its staff for their generous hospitality. We also thank Katrin F\"assler, Jiayin Liu, and Josh Zahl for useful discussions.

\section{Reduction to a $\delta$-discretised problem}\label{deltaReduction}

In this section, we reduce the proof of Theorem \ref{main} to a $\delta$-discretised statement. The main work is contained in Proposition \ref{prop2} below.

\begin{definition}[$(\delta,s,C)$-sets]\label{deltaSSet} Let $(X,d)$ be a metric space, let $P \subset X$ be a set, and let $\delta,C > 0$, and $s \geq 0$. We say that $P$ is a \emph{$(\delta,s,C)$-set} if
\begin{displaymath} |P \cap B(x,r)|_{\delta} \leq C r^{s} \cdot |P|_{\delta}, \qquad x \in X, \, r \geq \delta. \end{displaymath}
\end{definition}

The definition of a $(\delta,s,C)$-set, above, is slightly different from a more commonly used notion in the area, introduced by Katz and Tao \cite{MR1856956}. The Katz-Tao notion will also be used later in the paper, in Section \ref{s:discrete}. However, Definition \ref{deltaSSet} is not new either, and these variants of $(\delta,s,C)$-sets were for example employed in \cite{2021arXiv210603338O}. It is worth noting that a $(\delta,s,C)$-set is not required to be $\delta$-separated to begin with; however, it is easy to check that every $(\delta,s,C)$-set contains a $\delta$-separated $(\delta,s,C')$-set with $C' \sim C$. Another remark, which will be employed numerous times without explicit mention, is that if $P \subset \R^{d}$ is a $(\delta,s,C)$-set, and $P' \subset P$ satisfies $|P'|_{\delta} \geq c|P|_{\delta}$, then $P'$ is a $(\delta,s,C/c)$-set.

\begin{definition}[$(\delta,s,C)$-sets of lines and tubes] Let $\mathcal{A}(2,1)$ be the metric space of all (affine) lines in $\R^{2}$ equipped the metric
\begin{displaymath} d_{\mathcal{A}(2,1)}(\ell_{1},\ell_{2}) := \|\pi_{L_{1}} - \pi_{L_{2}}\| + |a_{1} - a_{2}|. \end{displaymath}
Here $\pi_{L_{j}} \colon \R^{2} \to L_{j}$ is the orthogonal projection to the subspace $L_{j}$ parallel to $\ell_{j}$, and $\{a_{j}\} = L_{j}^{\perp} \cap \ell_{j}$. A set $\mathcal{L} \subset \mathcal{A}(2,1)$ is called a $(\delta,s,C)$-set if $\mathcal{L}$ is a $(\delta,s,C)$-set in the metric space $(\mathcal{A}(2,1),d_{\mathcal{A}(2,1)})$, in the sense of Definition \ref{deltaSSet}.

A \emph{$\delta$-tube} is a (Euclidean) $\delta$-neighbourhood of some line $\ell \in \mathcal{A}(2,1)$. A family of $\delta$-tubes $\mathcal{T} = \{\ell(\delta) : \ell \in \mathcal{L}\}$ is called a $(\delta,s,C)$-set if the line family $\mathcal{L}$ is a $(\delta,s,C)$-set.
\end{definition}

Proposition \ref{prop2} below will discuss $(\delta,s,C)$-sets of $\delta$-tubes $\mathcal{T}_{x}$, $x \in B(1)$, with the special property that $x \in T$ for all $T \in \mathcal{T}_{x}$. In this case, it is easy to check that the $(\delta,s,C)$-set property of $\mathcal{T}_{x}$ is equivalent to the statement that the directions of the tubes (as a subset of $S^{1}$) form a $(\delta,s,C')$-set for some $C' \sim C$.

We then state the key technical proposition of the section:

\begin{proposition}\label{prop2} Let $t \in [0,2]$ and $\sigma \in (0,1)$. Let $K \subset B(1) \subset \R^{2}$ be a compact set with $\Hd K > t$, write
\begin{displaymath} E := \{x \in \R^{2} \, \setminus \, K : \Hd \pi_{x}(K) < \sigma\}, \end{displaymath}
and assume that $\Hd E > s \geq 0$. Then, for every $\delta_{0} \in (0,1)$ and $\epsilon \in (0,1 - \sigma)$, the following objects exist for some $\Delta \in (0,\delta_{0}]$, and for some $\Sigma \leq \sigma + \epsilon$:
\begin{enumerate}[(i)]
\item \label{i} A non-empty $\Delta$-separated $(\Delta,t,\Delta^{-\epsilon})$-set $P_{K} \subset K$,
\item \label{ii} A non-empty $\Delta$-separated $(\Delta,s,\Delta^{-\epsilon})$-set $P_{E} \subset E$,
\item \label{iii} For every $x \in P_{E}$ a non-empty $(\Delta,\Sigma,\Delta^{-\epsilon})$-set of $\Delta$-tubes $\mathcal{T}_{x}$ with the properties that $x \in T$ for all $T \in \mathcal{T}_{x}$, and
\begin{displaymath} |T \cap P_{K}| \geq \Delta^{\Sigma + \epsilon}|P_{K}|, \qquad T \in \mathcal{T}_{x}. \end{displaymath}
\end{enumerate}
\end{proposition}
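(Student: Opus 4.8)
### Proof proposal

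The plan is to pass from the Hausdorff-dimensional hypotheses to a single scale $\Delta$ at which all three families of objects exist simultaneously, using Frostman measures and a pigeonholing argument over dyadic scales. First I would fix a small $\eta > 0$ (depending on $\epsilon$ and the gaps $\Hd K - t$, $\Hd E - s$) and invoke Frostman's lemma to produce measures $\mu_K$ on $K$ with $\mu_K(B(x,r)) \lesssim r^{t+\eta}$ and $\mu_E$ on a compact subset of $E$ with $\mu_E(B(x,r)) \lesssim r^{s+\eta}$; both measures have positive total mass. The set $E$ is a priori only Borel, so I would first restrict to a compact subset of $E$ of dimension still exceeding $s$, which is where the Borel hypothesis is used. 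The role of the small surplus $\eta$ is to absorb the $\Delta^{-\epsilon}$ losses incurred when one discretises a Frostman measure at a fixed scale: a standard argument shows that for a set carrying a $(t+\eta)$-Frostman measure, at a suitable scale $\Delta$ one can find a $\Delta$-separated $(\Delta, t, \Delta^{-\epsilon})$-set inside it, and likewise for $E$ and $s$.

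The core of the argument is the choice of $\Sigma$ and $\mathcal{T}_x$. For each $x \in E$ we know $\Hd \pi_x(K) < \sigma$, so there is a cover of $\pi_x(K)$ by arcs $\{I_j^x\}$ with $\sum_j (\diam I_j^x)^\sigma$ small; pulling these back under $\pi_x$ gives a countable cover of $K \setminus \{x\}$ by "sectors" (tubes through $x$). Using the Frostman measure $\mu_K$ and a stopping-time / dyadic pigeonholing on the widths of these sectors, one extracts, for each $x$, a single scale $\rho = \rho(x)$ and a family of $\rho$-tubes through $x$ capturing a definite fraction of $\mu_K$, with the tube-directions forming a $(\rho, \Sigma_x, \text{const})$-set for some $\Sigma_x \le \sigma + \epsilon$ (the $\epsilon$ slack coming from the gap between $\Hd \pi_x(K)$ and $\sigma$ and from the discretisation). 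A second pigeonholing — now over $x \in \spt \mu_E$, over the dyadic values of $\rho(x)$, and over the dyadic values of $\Sigma_x$ — lets me fix a positive-$\mu_E$-measure subset of $E$ on which $\rho(x) = \Delta$ and $\Sigma_x = \Sigma$ are constant (up to a dyadic block), and within that set I re-run the Frostman-to-$(\delta,s,C)$-set extraction at scale $\Delta$ to get $P_E$. Restricting $P_K$ to the scale-$\Delta$ discretisation, the mass bound $|T \cap P_K| \ge \Delta^{\Sigma+\epsilon}|P_K|$ is exactly the translation of "$T$ captures a $\gtrsim \Delta^{o(1)}$-fraction, diluted by the at-most-$\Delta^\Sigma$ many tubes" into counting form, after converting $\mu_K$-mass back to $\delta$-covering numbers via the Frostman bound.

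The main obstacle is coordinating the scales: the scale $\rho(x)$ produced by pigeonholing the sector-cover depends on $x$, and a priori ranges over all of $(0,\delta_0]$, so one must pigeonhole over a countable (dyadic) set of scales while keeping a positive fraction of $\mu_E$ — this is fine because there are only countably many dyadic scales and $\mu_E$ is a finite measure, but it forces the final $\Delta$ to be *some* scale $\le \delta_0$ rather than a prescribed one, which is why the statement only asserts existence of $\Delta \in (0,\delta_0]$. A secondary technical point is making the passage "Frostman measure $\Rightarrow$ $\delta$-separated $(\delta,s,\delta^{-\epsilon})$-set" uniform enough that it can be applied at the one scale $\Delta$ selected at the end; this is handled by noting that a $(t+\eta)$-Frostman measure of mass $\sim 1$ on a set of diameter $\lesssim 1$ forces, at *every* small enough scale $\delta$, a $\delta$-separated subset of cardinality $\gtrsim \delta^{-t-\eta}$ with the desired branching, and one only needs $\eta$ large compared to $\epsilon$ for the $(\delta, t, \delta^{-\epsilon})$ bound to follow. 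Finally, one must check the compatibility of all the "up to constants" and "up to $\Delta^{-\epsilon}$" losses — the bookkeeping is routine but must be done in the order: fix $\epsilon$, then $\eta = \eta(\epsilon)$, then all Frostman measures, then pigeonhole scales, then extract the separated sets — so that no later choice feeds back into an earlier one.
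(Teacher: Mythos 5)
Your overall skeleton (Frostman measures for $K$ and $E$, pigeonholing the arc-cover of $\pi_{x}(K)$ down to a single scale, then pigeonholing over $x$ to fix the scale and the exponent, and finally extracting $\Delta$-separated sets) does match the paper's proof. But there is a genuine gap at the heart of your second paragraph: you assert that after the dyadic pigeonholing of the sector widths, the tube directions at the selected scale $\rho(x)$ ``form a $(\rho,\Sigma_{x},\mathrm{const})$-set''. Single-scale information of the form ``at most $\delta^{-\sigma}$ arcs of length $\delta$ carry a definite fraction of the mass'' (the analogue of \eqref{form8}) does not give this: the surviving arcs could all lie inside one arc of length $\delta^{1/2}$, say, in which case the direction set is badly concentrated at the intermediate scale $\delta^{1/2}$ and fails the $(\delta,\Sigma,C)$-property for the exponent $\Sigma$ determined by the arc count, even with $C = \delta^{-\epsilon}$. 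This is exactly where the paper has to work hardest: it first replaces $\pi_{x}\mu$ by a sub-measure that is uniform across a whole hierarchy of scales (Lemma \ref{lemma2}), and then runs a scale-selection algorithm producing a \emph{coarser} scale $\Delta_{x}$ at which the branching exponent $\varphi_{x}$ is nearly minimal on the window $[\Delta_{x},\Delta_{x}^{\epsilon}]$ (property \eqref{form10}); only with both ingredients can one verify non-concentration of the directions at all scales (Claim \ref{SClaim}). Your proposal contains no mechanism playing this role, and the scale your stopping time produces need not be one at which the conclusion holds.

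A second, related omission concerns the coupling in \eqref{iii}: \emph{every} tube of $\mathcal{T}_{x}$ must satisfy $|T \cap P_{K}| \geq \Delta^{\Sigma + \epsilon}|P_{K}|$ with the \emph{same} $\Sigma$ as in the $(\Delta,\Sigma,\Delta^{-\epsilon})$-property. Your dilution argument (``a definite fraction of $\mu_{K}$ spread over at most $\Delta^{-\Sigma}$ tubes'') only yields such a bound for some tubes, or on average; pruning to the good tubes could then destroy the non-concentration of the family. In the paper this is handled by the per-arc uniformity $\Delta^{\Sigma + \eta} \leq \mu_{x}'(I) \leq \Delta^{\Sigma}$ from \eqref{form13}, the matching upper bound \eqref{form19}, and the final pruning \eqref{form7}, which retains a $\Delta^{2\epsilon}$-fraction of the tubes and hence keeps the $(\Delta,\Sigma,\cdot)$-set property. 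Until you supply substitutes for the uniformisation lemma and the scale jump, the central claims of your outline are unsupported rather than merely routine bookkeeping.
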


\begin{remark} It may be informative to note that $1 + \sigma - t > 0$ under the hypothesis of Proposition \ref{prop2}. Indeed, since $\Hd E > 0$ by assumption, there exists at least one point $x \in E$. This forces $t < \Hd K \leq 1 + \sigma$, and consequently $1 + \sigma - t > 0$. \end{remark}

%The proposition assumes no relationship between $\sigma$ and $t$,
%but is (likely) useless if $\sigma \geq t$: in this case it may
%happen that $\Sigma = t$ and $|P_{K}| \sim \Delta^{-t}$. With
%these numbers, property (3) contains the uninteresting piece of
%information that there are $\approx \Delta^{-\Sigma} \sim |P_{K}|$
%tubes passing through each point $x \in E$, each of which contains
% roughly one point of $P_{K}$. Property (3) becomes more
%interesting if $\sigma < t$: then also $\Sigma \leq t$, and
%$\Delta^{\Sigma}|P_{K}| \gg 1$.

Before proving Proposition \ref{prop2}, we state a $\delta$-discretised version of Theorem \ref{main}. Then, we conclude the proof of Theorem \ref{main}, assuming this statement, and Proposition \ref{prop2}.

\begin{thm}\label{main2}
For every $t \in (1,2]$, $\sigma \in [0,1)$, and $\eta > 0$,
there exist $\epsilon = \epsilon(\sigma,t,\eta) > 0$ and $\delta_{0} = \delta_{0}(\sigma,t,\eta) > 0$
such that the following holds for all $\delta \in (0,\delta_{0}]$.

Let $s \in [0,2]$. Let $P_{K} \subset B(1) \subset \R^{2}$ be a $\delta$-separated $(\delta,t,\delta^{-\epsilon})$-set, and let $P_{E} \subset B(1)
\subset \R^{2}$ be a $\delta$-separated $(\delta,s,\delta^{-\epsilon})$-set. Assume
that for every $x \in P_{E}$, there exists a
$(\delta,\sigma,\delta^{-\epsilon})$-set of tubes
$\mathcal{T}_{x}$ with the properties $x \in T$ for all $T \in
\mathcal{T}_{x}$, and
\begin{displaymath} |T \cap P_{K}| \geq \delta^{\sigma + \epsilon}|P_{K}|, \qquad T \in \mathcal{T}_{x}. \end{displaymath}
Then $s \leq 1 + \sigma - t + \eta$.
\end{thm}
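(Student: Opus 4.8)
The plan is to deduce Theorem~\ref{main2} from the incidence theorem of Fu and Ren \cite{2021arXiv211105093F}, which bounds the number of $\delta$-incidences between a $(\delta,t,\cdot)$-set of points and a $(\delta,\sigma,\cdot)$-set of tubes. First I would set up the incidence count: let $P_E = \{x_1,\dots,x_m\}$ with $m = |P_E| \gtrsim \delta^{-s+O(\epsilon)}$ (using that $P_E$ is a $\delta$-separated $(\delta,s,\delta^{-\epsilon})$-set inside $B(1)$), and let $\mathcal{T} = \bigcup_x \mathcal{T}_x$. The hypothesis $|T \cap P_K| \geq \delta^{\sigma+\epsilon}|P_K|$ together with $|P_K| \gtrsim \delta^{-t+O(\epsilon)}$ says each tube $T \in \mathcal{T}_x$ is richly incident to $P_K$, so summing over the $\gtrsim \delta^{-\sigma+O(\epsilon)}$ tubes in each $\mathcal{T}_x$ and over the $m$ points of $P_E$ gives a lower bound for the total incidence count $I(P_K,\mathcal{T})$ of order $m \cdot \delta^{-\sigma} \cdot \delta^{\sigma+t} \cdot \delta^{-t+\epsilon} \cdot (\text{stuff}) \sim \delta^{-s} \cdot \delta^{O(\epsilon)}$ incidences — roughly $\delta^{-s - \text{(number of points per tube)}}$, I need to be careful here and instead phrase it as: the family $\mathcal{T}$ has $\gtrsim \delta^{-s-\sigma+O(\epsilon)}$ tubes (after pruning repeats, which only costs a constant since the tubes through a fixed $x$ are essentially indexed by directions), each carrying $\gtrsim \delta^{\sigma+\epsilon}|P_K|$ points, so $I(P_K,\mathcal{T}) \gtrsim \delta^{-s-\sigma+O(\epsilon)} \cdot \delta^{\sigma}|P_K| = \delta^{-s+O(\epsilon)}|P_K| \gtrsim \delta^{-s-t+O(\epsilon)}$.

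Next I would verify that $\mathcal{T}$ is a $(\delta,\sigma,\delta^{-O(\epsilon)})$-set of tubes: this requires checking a non-concentration estimate for $\mathcal{T}$ as a subset of $\mathcal{A}(2,1)$, using that each $\mathcal{T}_x$ is a $(\delta,\sigma,\delta^{-\epsilon})$-set and that the centres $x$ form a $(\delta,s,\delta^{-\epsilon})$-set. The point is that a ball of radius $r$ in $\mathcal{A}(2,1)$ meets tubes through at most $\lesssim \delta^{-\epsilon} r^s |P_E|$ distinct points $x$, and within each $\mathcal{T}_x$ picks up $\lesssim \delta^{-\epsilon} r^\sigma |\mathcal{T}_x|$ tubes; combined with $s \leq 2$ this should give the required $(\delta,\sigma,\delta^{-O(\epsilon)})$-bound provided $s + \sigma \geq \sigma$, i.e.\ always, but with the exponent being $\min\{s+\sigma, \cdot\}$ — actually here I only need the $\sigma$-dimensional bound, and since each point contributes a $(\delta,\sigma)$-set of tubes, the union over an $s$-set of points is at worst a $(\delta,\sigma)$-set (the extra $s$ dimensions only help concentration in a way that is controlled by the cardinality). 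I would then feed $P_K$ (a $(\delta,t,\delta^{-\epsilon})$-set of points) and $\mathcal{T}$ (a $(\delta,\sigma,\delta^{-O(\epsilon)})$-set of tubes) into the Fu--Ren incidence bound, which for $t > 1$ yields $I(P_K,\mathcal{T}) \lesssim \delta^{-O(\epsilon)}\big(|\mathcal{T}|^{1/2}|P_K|^{1/2+\text{something}} + \dots\big)$ — more precisely their theorem gives, in the relevant range, $I \lesssim \delta^{-\epsilon}\, |\mathcal{T}|^{(t-1)/t}\, |P_K|^{?} + \ldots$; I would use the clean form of the estimate appropriate to $(\delta,t)$-point sets and $(\delta,\sigma)$-tube sets with $t\in(1,2]$, which bounds $I$ by a quantity of order $\delta^{-O(\epsilon)}\big(\#\mathcal{T}\cdot\#P_K\big)^{1/2}\cdot(\#\mathcal{T})^{?}$, and compare exponents.

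The conclusion then comes from confronting the lower bound $I \gtrsim \delta^{-s-t+O(\epsilon)}$ with the Fu--Ren upper bound $I \lesssim \delta^{-(1+\sigma) + O(\epsilon)} \cdot (\text{powers matching } s,t)$; rearranging the resulting inequality between powers of $\delta$ and letting $\epsilon \to 0$ should yield exactly $s \leq 1 + \sigma - t + \eta$, where $\eta$ absorbs the $O(\epsilon)$ losses and the choice $\epsilon = \epsilon(\sigma,t,\eta)$ is made at the end to make $O(\epsilon) < \eta$. I would also need to dispose of the degenerate case $s \leq 1+\sigma-t$ trivially (nothing to prove) and treat the case where $|P_K|$ or $|P_E|$ is small separately, though the $(\delta,\cdot,\delta^{-\epsilon})$-set hypotheses prevent genuine degeneracy once $\delta$ is small.

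The main obstacle I anticipate is twofold: first, correctly invoking the Fu--Ren incidence theorem — it is stated for Katz--Tao $(\delta,s)$-sets and one must pass between that notion and Definition~\ref{deltaSSet} (a translation the authors flag as routine), and one must identify the precise form of their bound that is sharp in the regime $t > 1$, $0\le\sigma<1$ so that the arithmetic of the exponents closes up to give $1+\sigma-t$ rather than something weaker; second, the book-keeping of all the $\delta^{-\epsilon}$ factors and pigeonholing steps, making sure that the cumulative loss is genuinely $O(\epsilon)$ (with an absolute or $(\sigma,t)$-dependent implicit constant in the $O$) and hence can be swallowed by $\eta$ after choosing $\epsilon$ small. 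Verifying that the pooled tube family $\mathcal{T}$ genuinely satisfies the Katz--Tao non-concentration condition with an acceptable constant, uniformly over the unknown scale $\delta$, is the one place where a real (if short) argument rather than bookkeeping is required.
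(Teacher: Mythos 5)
There is a genuine gap, and it is the central idea of the proof rather than a bookkeeping issue. Your plan pools the tubes into $\mathcal{T} = \bigcup_{x \in P_{E}} \mathcal{T}_{x}$ and then treats $\mathcal{T}$ as (at best) a $(\delta,\sigma,\delta^{-O(\epsilon)})$-set before applying the Fu--Ren incidence bound. But with that input the parameter $s$ cancels out of the comparison and no bound on $s$ can be extracted. Concretely: the hypothesis gives the lower bound $|\mathcal{I}(P_{K},\mathcal{T})| \geq |\mathcal{T}| \cdot \delta^{\sigma + \epsilon}|P_{K}|$, while Theorem \ref{FuRen} (or the original Katz--Tao version, after decomposing $\mathcal{T}$ into $\sim \delta^{-s}$ Katz--Tao $(\delta,\sigma,1)$-pieces of size $\lesssim \delta^{-\sigma}$, which is exactly how your cardinality count $|\mathcal{T}| \approx \delta^{-s-\sigma}$ gets used) gives an upper bound of the form $|P_{K}||\mathcal{T}| \cdot \delta^{\kappa(t + \sigma - 1) - O(\epsilon)}$. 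Both sides are linear in $|\mathcal{T}|$, so comparing them yields only a relation between $\sigma$, $t$ and $\kappa$ --- the dimension $s$ of $P_{E}$ never appears, and the conclusion $s \leq 1 + \sigma - t + \eta$ cannot follow. Your heuristic that ``the union over an $s$-set of points is at worst a $(\delta,\sigma)$-set'' is also pointing in the wrong direction: the quantitative content of the theorem comes precisely from the fact that the pooled family is \emph{better} than each individual $\mathcal{T}_{x}$.

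The missing ingredient is the Furstenberg-type dimension upgrade: by point--line duality and the $(s,t)$-Furstenberg estimates (Lemmas \ref{lemma4} and \ref{lemma5}, resting on Lutz--Stull and H\'era--Shmerkin--Yavicoli), the union $\bigcup_{x \in P_{E}} \mathcal{T}_{x}$ contains a non-empty $(\delta,\gamma(\sigma,s),\delta^{-\zeta})$-set of tubes with $\gamma(\sigma,s) = \sigma + \min\{s,\sigma\}$. Feeding this larger-dimensional family into Theorem \ref{FuRen} and comparing with the same lower bound $|P_{K}||\mathcal{T}|\delta^{\sigma+\epsilon}$ is where $s$ enters: in the case $s \leq \sigma$ and $\kappa = 1/2$ one gets $\sigma \geq \tfrac{1}{2}(t + s + \sigma - 1) - O(\epsilon + \zeta)$, i.e.\ $s \leq 1 + \sigma - t + \eta$, and the remaining cases are ruled out using $\sigma < 1$ and $t > 1$. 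So the real work is not translating between the two notions of $(\delta,s)$-set or tracking $\delta^{-\epsilon}$ losses (those are indeed routine), but proving or invoking the dual Furstenberg bound that your argument omits; without it the approach cannot close.
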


The proof of Theorem \ref{main2} is contained in Section \ref{s:discrete}.

\begin{proof}[Proof of Theorem \ref{main} assuming Theorem \ref{main2} and Proposition \ref{prop2}] We start with a counter assumption. There exists a compact set $K \subset B(1)$ with $\Hd K > t > 1$, a number $\sigma \in [0,1)$, and another number $s > \max\{1 + \sigma - t,0\}$ such that the set
\begin{displaymath} E = \{x \in B(1) \, \setminus \, K : \Hd \pi_{x}(K) < \sigma\} \end{displaymath}
satisfies $\Hd E > s$. In particular, $s > 1 + \sigma - t + 2\eta$ for some $\eta > 0$.

With this information in hand, and since also $s \geq 0$, Proposition \ref{prop2}
 yields (for any given $\delta_{0} \in (0,1)$ and $\epsilon \in (0,1 - \sigma)$)
the objects $P_{K} \subset K \subset B(1)$ and $P_{E} \subset E
\subset B(1)$, satisfying the properties \eqref{i}--\eqref{iii} of Proposition
\ref{prop2} at some scale $\Delta \in (0,\delta_{0}]$, and for
some $\Sigma \leq \sigma + \epsilon$. In particular, if we choose
$\epsilon \in (0,\eta]$, we have
\begin{displaymath} s > 1 + \Sigma - t + \eta. \end{displaymath}
However, this violates the statement of Theorem \ref{main2} with parameters "$\Delta$" and "$\Sigma$" in place of "$\delta$" and "$\sigma$", assuming that $\epsilon,\delta_{0}$ were taken small enough to begin with, depending on $\sigma,t$, and $\eta$. This contradiction completes the proof of Theorem \ref{main}. \end{proof}

\subsection{Proof of Proposition \ref{prop2}} The next notion will be useful in proving Proposition \ref{prop2}.
\begin{definition}[Uniform measures]\label{d:regularity} Let $d \in \N$, and let $\Phi := (\varphi(1),\ldots,\varphi(m)) \in \R_{+}^{m}$. Let $\mu$ be a Radon measure on $[0,1)^{d}$, and let $\eta \in (0,1)$, $T \geq 1$. The measure $\mu$ is called $(\Phi,T,\eta)$-uniform if for every $k \in \{\ceil{\eta m},\ldots,m\}$, and every $Q \in \mathcal{D}_{kT}$ either
\begin{equation}\label{uniformity} \ell(Q)^{\varphi(k) + \eta} \leq \mu(Q) \leq \ell(Q)^{\varphi(k)} \quad \text{or} \quad \mu(Q) = 0. \end{equation}
\end{definition}

Definition \ref{d:regularity} is somewhat reminiscent of the definition of "$\sigma$-regular measures" introduced by Keleti and Shmerkin in \cite[Definition 3.2]{KeletiShmerkin19}. The main difference is that in \cite[Definition 3.2]{KeletiShmerkin19}, the sequence $\Phi$ encodes the ratios $\mu(Q)/\mu(\hat{Q})$ for $Q \in \mathcal{D}_{kT}$, whereas our definition makes no reference to this ratio.

We also note that the $(\Phi,T,\eta)$-regularity of $\mu$ requires nothing about the values of $\mu$ on cubes whose side-length is greater than $2^{-\ceil{\eta m}T}$. In particular the values $\varphi(k)$ with $k < \ceil{\eta m}$ are not actually used in Definition \ref{d:regularity}.

The next lemma allows us to find $(\Phi,T,\eta)$-uniform "sub-measures" of arbitrary Radon measures. Analogous statements for other notions of "regular measures" and "uniform sets" are common in the literature, see for example \cite[Lemma 3.4]{KeletiShmerkin19} or \cite[Lemma 7.3]{2021arXiv210603338O}.

\begin{lemma}\label{lemma2} For every $d \in \N$ and $\eta \in (0,1]$, there exists $T_{0} = T_{0}(d,\eta) \geq 1$ (see \eqref{form21}) such that the following holds for $T \geq T_{0}$. Let $\mu$ be a Radon measure with $\spt \mu \subset [0,1)^{d}$ and $\delta^{\eta} \leq \|\mu\| \leq 1$, where $\delta = 2^{-mT}$ for some $m \in \N$. Then, there exists a family of dyadic cubes $\mathcal{Q} \subset \mathcal{D}_{\delta}$ with the following properties:
\begin{itemize}
\item The restriction $\mu' := \mu|_{\cup \mathcal{Q}}$ has mass $\|\mu'\| \geq \delta^{2\eta}$,
\item $\mu'$ is $(\Phi,T,\eta)$-uniform with some $\Phi \subset [0,d + 2]^{m}$.
\end{itemize}
\end{lemma}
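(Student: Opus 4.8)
The plan is to prove Lemma \ref{lemma2} by a standard pigeonholing-over-scales argument, iterated across the $m$ "super-scales" $\delta_k = 2^{-kT}$, $k = \ceil{\eta m}, \ldots, m$. The idea is that at each super-scale $k$ we will discard the cubes $Q \in \calD_{\delta_k}$ carrying either too little or too much $\mu$-mass, and keep only those in a single dyadic "mass window" of the form $[\ell(Q)^{\varphi(k)+\eta}, \ell(Q)^{\varphi(k)}]$; the point is that there are only $O(Tm)$ such windows in the relevant range of masses, so by pigeonholing we lose only a $\mathrm{poly}(1/\delta)$-type factor in mass at each step, and we can afford to iterate this $m$ times if $T$ is large enough relative to $1/\eta$.

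In detail, first I would set up the "mass window" decomposition: fix a super-scale $\delta_k = 2^{-kT}$ with $\ceil{\eta m} \le k \le m$. Every cube $Q \in \calD_{\delta_k}$ with $\mu(Q) > 0$ satisfies $\mu(Q) \le \|\mu\| \le 1$, so $\mu(Q) \in (0,1]$, and we can write $\mu(Q) \in [\ell(Q)^{j+1}, \ell(Q)^j)$ for a unique integer $j \ge 0$ (writing $\ell(Q)^{j} = 2^{-kTj}$). The exponent $j$ ranges over $0 \le j \le C$ for some absolute $C$ depending only on $d$: indeed the standard observation is that cubes with $\mu(Q) < \ell(Q)^{d+2}$ can be discarded wholesale, since their total mass is at most $\sum_{Q \in \calD_{\delta_k}} \ell(Q)^{d+2} = \delta_k^{2} \le \delta^{2}$ (here I use $\# \calD_{\delta_k} = \delta_k^{-d}$ and $\delta_k \ge \delta$), which is negligible compared to the target mass $\delta^{2\eta}$ once $\delta$ is small. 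So after discarding, every surviving cube has $\mu(Q) \in [\ell(Q)^{d+2}, \ell(Q)^{0}] = [\ell(Q)^{d+2},1]$, and the exponent $j$ takes one of the $\le d+2$ values $0, 1, \ldots, d+1$; choosing the most massive window and keeping only those cubes, we retain a fraction $\ge 1/(d+3)$ of the mass present before this step (minus the negligible discard), and on the retained cubes $\ell(Q)^{\varphi(k)+1} \le \mu(Q) \le \ell(Q)^{\varphi(k)}$ with $\varphi(k) := j \in [0,d+1]$ — which certainly implies the weaker window in \eqref{uniformity} since $\eta \le 1$. (One has to nest the retained families across $k$, i.e. at super-scale $k$ only consider cubes contained in a retained cube from super-scale $k-1$; this is automatic since $\mu'$ is obtained by restriction.)

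Now iterate over the $\le m$ super-scales. At each step we lose a multiplicative factor $\ge 1/(d+3)$ in mass and subtract a negligible error $\le \delta^{2}$; after at most $m$ steps, starting from $\|\mu\| \ge \delta^{\eta} = 2^{-mT\eta}$, the surviving mass is at least $(d+3)^{-m}\|\mu\| - m\delta^{2} \ge (d+3)^{-m} \delta^{\eta} - m \delta^2$. Since $(d+3)^{-m} = 2^{-m\log_2(d+3)}$ and $\delta^{\eta} = 2^{-mT\eta}$, we have $(d+3)^{-m}\delta^\eta = \delta^{\eta + \log_2(d+3)/T}$, which is $\ge \tfrac12 \delta^{2\eta}$ provided $\log_2(d+3)/T \le \eta$, i.e. provided $T \ge T_0 := \log_2(d+3)/\eta$ (this is the constant $T_0(d,\eta)$ referenced as \eqref{form21}; one takes it slightly larger to also absorb the $m\delta^2$ error and the rounding in $d+3$ versus $d+2$, all of which is routine). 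Let $\calQ \subset \calD_\delta$ be the family of $\delta$-cubes contained in a cube retained at the final super-scale $m$, and set $\mu' := \mu|_{\cup \calQ}$; then $\|\mu'\| \ge \delta^{2\eta}$ and, by construction, for every $k \in \{\ceil{\eta m},\ldots,m\}$ and every $Q \in \calD_{kT}$ either $Q$ was discarded (whence $\mu'(Q) = 0$) or $\ell(Q)^{\varphi(k)+1} \le \mu(Q) = \mu'(Q) \le \ell(Q)^{\varphi(k)}$, giving the $(\Phi,T,\eta)$-uniformity with $\Phi = (\varphi(1),\ldots,\varphi(m)) \in [0,d+2]^m$ (the entries $\varphi(k)$ for $k < \ceil{\eta m}$ being irrelevant, we may set them to $0$).

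The main obstacle — really the only subtle point — is the bookkeeping that ensures the accumulated loss over $m$ iterations stays below the target $\delta^{2\eta}$; this is precisely what forces $T$ to be large depending on $d$ and $\eta$, and it is the reason the statement quantifies over $T \ge T_0(d,\eta)$. Everything else (the per-scale pigeonhole, the discarding of low-mass cubes via $\#\calD_{\delta_k} = \delta_k^{-d}$, and the consistency of the nested families under restriction) is entirely routine and parallels \cite[Lemma 3.4]{KeletiShmerkin19} and \cite[Lemma 7.3]{2021arXiv210603338O}.
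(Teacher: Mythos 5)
Your overall strategy (pigeonhole a mass window at each super-scale, iterate over the $\sim m$ scales, absorb the per-step multiplicative loss by taking $T$ large) is the same as the paper's, but three steps as written do not work. First, your windows are too wide: retaining cubes with $\mu(Q)\in[\ell(Q)^{j+1},\ell(Q)^{j})$ for an \emph{integer} $j$ gives an exponent-gap of $1$, and this does not imply the gap-$\eta$ condition \eqref{uniformity} --- the implication goes the other way, since $\eta\le 1$ means $\ell(Q)^{\varphi(k)+\eta}\ge \ell(Q)^{\varphi(k)+1}$. Indeed two retained cubes with $\mu(Q_{1})=\ell^{\,j+0.05}$ and $\mu(Q_{2})=\ell^{\,j+0.95}$ fit in your integer window but in no common window of exponent-width $\eta=1/2$. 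You must pigeonhole over exponents $\varphi(k)\in\eta\N\cap[0,d+2]$, i.e.\ over $\sim d/\eta$ classes per scale; the per-step retained fraction then becomes $\sim\eta/d$ rather than $1/(d+3)$, which is exactly why $T_{0}$ in \eqref{form21} is of size $\eta^{-1}\log(10d/\eta)$ and not your $\log_{2}(d+3)/\eta$.

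Second, and more seriously, you iterate from coarse to fine (cubes at super-scale $k$ are filtered inside cubes retained at super-scale $k-1$, and $\mathcal{Q}$ is read off at the \emph{finest} scale $m$), classify cubes by their original $\mu$-mass, and then assert $\mu'(Q)=\mu(Q)$ for retained $Q\in\mathcal{D}_{kT}$. That is false: the prunings performed at scales finer than $k$ remove mass from inside $Q$, and the pigeonhole only controls the retained proportion globally, not within each individual coarse cube, so $\mu'(Q)$ can be positive yet far below the certified lower bound $\ell(Q)^{\varphi(k)+\eta}$; the $(\Phi,T,\eta)$-uniformity of $\mu'$ is therefore not established. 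The paper runs the induction in the opposite direction, from the finest scale $m$ up to $\lceil\eta m\rceil$: each step restricts to a union of cubes of the current (coarser) scale, and such a restriction either leaves the value on every previously processed finer cube unchanged or sets it to $0$, which is exactly the ``window or zero'' dichotomy required. Finally, your bookkeeping of the discarded light cubes is off: their total mass at super-scale $k$ is at most $\delta_{k}^{2}$, and since $\delta_{k}\ge\delta$ this is \emph{not} $\le\delta^{2}$; the correct bound, via $k\ge\lceil\eta m\rceil$, is $\delta_{k}^{2}\le\delta^{2\eta}$, which summed over scales is comparable to the target mass $\delta^{2\eta}$ and cannot be dismissed as negligible (nor may you assume ``$\delta$ small'', since the lemma must hold for every $m\in\N$). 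One must instead compare the light-cube mass with the current mass $\|\mu_{k+1}\|$ at each step, as the paper does via the inductive bound \eqref{form22} and the choice of $T_{0}$, to see that at most half of the mass is lost before pigeonholing.
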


\begin{proof} The proof is based on $\sim m$ applications of the pigeonhole principle, although one needs to be somewhat careful with the constants. We set
\begin{equation}\label{form21} T_{0} := \ceil{\eta^{-1} \log \tfrac{10d}{\eta}}, \end{equation}
and fix $T \geq T_{0}$. We start the inductive construction of the measure $\mu'$ at scale $\delta = 2^{-mT}$. We set $\mu_{m + 1} := \mu$. Then, we assume that $k \in \{\ceil{\eta m},\ldots,m\}$, a measure $\mu_{k + 1}$ has already been constructed, and satisfies
\begin{equation}\label{form22} \|\mu_{k + 1}\| \geq \delta^{\eta}\left(\frac{\eta}{10d} \right)^{m - k} \geq 2\delta^{\eta}\left(\frac{\eta}{10d} \right)^{m}. \end{equation}
The measure $\mu_{k}$ will be obtained by restricting $\mu_{k +
1}$ to a certain union of cubes in $\mathcal{D}_{kT}$. We make the
following preliminary observation: the union of the "light" cubes
in $Q \in \mathcal{D}_{kT}$ satisfying $\mu(Q) < \ell(Q)^{d + 2}$
has total $\mu$ measure at most
\begin{displaymath} \ell(Q)^{-d} \cdot \ell(Q)^{d + 2} = \ell(Q)^{2} = 2^{-2kT} \leq 2^{-2\ceil{\eta m}T} \leq \delta^{2\eta}. \end{displaymath}
On the other hand, we claim that by the choice of $T = T(d,\eta)$ in \eqref{form21}, we have
\begin{equation}\label{form20} \delta^{2\eta} \leq \delta^{\eta}\left(\frac{\eta}{10d} \right)^{m} \stackrel{\eqref{form22}}{\leq} \tfrac{1}{2}\|\mu_{k + 1}\|. \end{equation}
Indeed, since $\delta = 2^{-mT}$, we have $m = \tfrac{1}{T}\log \tfrac{1}{\delta}$. Therefore, the first inequality in \eqref{form20} is equivalent to
\begin{displaymath} \eta \log \delta \leq \tfrac{1}{T} (\log \tfrac{1}{\delta}) (\log \tfrac{\eta}{10d}) \quad \Longleftrightarrow \quad T \geq \eta^{-1} \log \tfrac{10d}{\eta},\end{displaymath}
and the latter inequality is true by $T \geq T_{0}$ and \eqref{form21}. The conclusion of all this is that the "light" cubes cover at most $\tfrac{1}{2}$ of the total mass of $\mu_{k + 1}$. Consequently, the union of the "heavy" cubes
$Q \in \mathcal{D}_{kT}$ satisfying $\ell(Q)^{d + 2} \leq \mu(Q) \leq \ell(Q)^{0}$ has measure at least $\tfrac{1}{2}\|\mu_{k + 1}\|$. We also note that
\begin{displaymath} |(\eta \N) \cap [0,d + 2]| \leq 5d/\eta. \end{displaymath}
Therefore, by the pigeonhole principle, there exists a constant $\varphi(k) \in (\eta \N) \cap [0,d + 2]$, and a family $\mathcal{Q}_{k} \subset \mathcal{D}_{kT}$ satisfying
\begin{equation}\label{form23} \mu_{k + 1}
\left(\cup \mathcal{Q}_{k} \right) \geq \frac{\eta}{5d} \cdot
\tfrac{1}{2}\|\mu_{k + 1}\| \stackrel{\eqref{form22}}{\geq}
\frac{\eta}{10d}\cdot \delta^{\eta} \cdot \left(\frac{\eta}{10d}
\right)^{m - k} = \left(\frac{\eta}{10d} \right)^{m - k + 1}
\delta^{\eta}
\end{equation} and
\begin{equation}\label{form24} \ell(Q)^{\varphi(k) + \eta} \leq \mu_{k + 1}(Q) \leq \ell(Q)^{\varphi(k)}, \qquad Q \in \mathcal{Q}_{k}. \end{equation}
We now define $\mu_{k}$ as the restriction of $\mu_{k + 1}$ to the union $\cup \mathcal{Q}_{k}$. Then, the inequality \eqref{form23} shows that the inductive hypothesis \eqref{form22} remains valid with $\mu_{k}$ in place of $\mu_{k + 1}$.

After the construction above has been carried out for all $k \in \{\ceil{\eta m},\ldots,m\}$, we define $\mu' := \mu_{\ceil{\eta m}}$. The inequalities \eqref{form24} show that $\mu'$ is $(\Phi,T,\eta)$-uniform. To see this, note that $\mu_{k}$ is obtained by restricting $\mu_{k + 1}$ to a union of cubes in $\mathcal{D}_{kT}$, the values of $\mu_{k}$ on cubes in $\mathcal{D}_{(k + 1)T}$ either stay unchanged, or are assigned to $0$. This, and \eqref{form24}, implies that
\begin{displaymath} \ell(Q)^{\varphi(k) + \eta} \leq \mu'(Q) \leq \ell(Q)^{\varphi(k)} \quad \text{or} \quad \mu'(Q) = 0 \end{displaymath}
for all $Q \in \mathcal{D}_{kT}$ with $k \in \{\ceil{\eta m},\ldots,m\}$.

Finally, $\|\mu'\| \geq \delta^{2\eta}$. This is immediate from the estimate \eqref{form20}. This completes the proof of the lemma. \end{proof}

We may then prove Proposition \ref{prop2}:

\begin{proof}[Proof of Proposition \ref{prop2}] Since $K$ was assumed compact, it is easy to check that $E = \{x \in B(1) \, \setminus \, K : \Hd \pi_{x}(K) < \sigma\}$ is Borel. By assumption, $\Hd K > t$ and $\Hd E > s$. By Frostman's lemma \cite[Theorem 8.8]{zbMATH01249699}, there exist non-trivial compactly supported Borel measures $\mu$ and $\nu$ such that $\spt \mu \subset K$, $\spt \nu \subset E$, $\max\{\|\mu\|,\|\nu\|\} \leq 1$, and
\begin{displaymath} \mu(B(x,r)) \leq r^{t} \quad \text{and} \quad \nu(B(x,r)) \leq r^{s} \end{displaymath}
for all $x \in \R^{2}$ and $r > 0$. For the remainder of this argument, the constants in the "$\lesssim$" notation are allowed to depend on $\|\mu\|$ and $\|\nu\|$. The size of $\|\mu\|$ and $\|\nu\|$ certainly influences the best constant "$C$" for which $P_{K}$ and $P_{E}$ are $(\delta,u,C)$-sets. However, the argument below will let us find "$\Delta$" as small as we like, and for example we may choose $\Delta > 0$ so small that $\Delta^{-\epsilon} \geq \max\{O(\|\mu\|^{-1}),O(\|\nu\|^{-1})\}$. We will find the sets $P_{K},P_{E}$ inside $\spt \mu$ and $\spt \nu$, so we redefine $K := \spt \mu$ and $E := \spt \nu$ (the assumptions $\Hd K > t$ and $\Hd E > s$ were only needed to find the measures $\mu,\nu$). Since $K$ and $E$ are now compact and disjoint,
\begin{displaymath} D := \dist(K,E) > 0. \end{displaymath}

We then start the proof in earnest by fixing some parameters. First, let $\delta_{0},\epsilon > 0$ be the parameters given in the statement of Proposition \ref{prop2}. Next, choose $\eta = \eta(\epsilon) > 0$ much smaller that "$\epsilon$", more precisely
\begin{equation}\label{eta} 0 < \eta < \tfrac{1}{4} \cdot \epsilon^{3/\epsilon + 1}. \end{equation}
There will be several occasions in the proof where we need to assume that $\delta > 0$ is small enough in a manner depending on the constants $\delta_{0},D,\eta,\epsilon$. This dependence is simply referred to by writing "if $\delta > 0$ is small enough". One particularly important restriction of this kind is the following:
\begin{equation}\label{form25} \delta^{\eta} \leq \delta_{0}. \end{equation}
Next, let $T = T(1,\eta) > 0$ be the number given by Lemma \ref{lemma2} applied to the constant "$\eta$", and in the space $S^{1}$ (identified with $[0,1)$ if desired, although the proof would work on $S^{1}$ equally well). We will apply the lemma to measures of the form $\pi_{x}\mu$ supported on $S^{1}$. In this application, we will denote by $\mathcal{D}_{2^{-j}}$ the system of "dyadic arcs" on $S^{1}$.

To apply the lemma, fix $x \in E$. By the definition of $\Hd \pi_{x}(K) < \sigma$, and the pigeonhole principle, there exist dyadic scales $\delta < \delta_{0}$ of the form $\delta = 2^{-mT}$, satisfying \eqref{form25}, and associated collections $\mathcal{J}_{x} \subset \mathcal{D}_{\delta}$  of dyadic arcs of length $\delta$, such that
\begin{equation}\label{form8} |\mathcal{J}_{x}| \leq \delta^{-\sigma} \quad \text{and} \quad (\pi_{x}\mu)\left(\cup \mathcal{J}_{x} \right) \geq \delta^{\eta}. \end{equation}
The scale $\delta = \delta_{x} > 0$ depends on $x \in E$, but by another application of the pigeonhole principle, there exists a subset $E_{0} \subset E$ with $\nu(E_{0}) \gtrsim (\log (1/\delta))^{-2}$ such that $\delta_{x} = \delta$ is independent of $x \in E_{0}$. To simplify notation, we keep denoting $E_{0}$ by $E$, but when the $\nu$ measure of $E$ next plays a role (all the way down at \eqref{form53}), we will keep in mind the factor $(\log(1/\delta))^{-2}$; this will be completely harmless.

In the sequel, we denote $\mu_{x}$ the restriction of $\pi_{x}\mu$ to the union $\cup \mathcal{J}_{x}$, so $\delta^{\eta} \leq \|\mu_{x}\| \leq 1$. This implies that we may apply Lemma \ref{lemma2} to the measure $\mu_{x}$, for $x \in E$ fixed. The conclusion is the existence of a measure $\mu_{x}'$, which
\begin{enumerate}[(a)]
\item\label{a} is the restriction of $\mu_{x}$ to a sub-family $\mathcal{J}_{x}' \subset \mathcal{J}_{x}$,
\item\label{b} satisfies $\|\mu_{x}'\| \geq \delta^{2\eta}$, and
\item\label{c} is $(\Phi,T,\eta)$-uniform for some $\Phi \subset [0,3]^{m}$, i.e. has the property \eqref{uniformity} for some exponents $\varphi_{x}(k) \in [0,3]$, for $k \in \{\ceil{\eta m},\ldots,m\}$.
\end{enumerate}
We will abuse notation by writing $\varphi_x(\delta) :=
\varphi_x(m)$, and more generally $\varphi_x(2^{-kT}) :=
\varphi_x(k)$. We define the sets
\begin{equation}\label{form12} K_{x} := K \cap \pi_{x}^{-1}\left(\cup \mathcal{J}_{x}' \right), \qquad x \in E. \end{equation}
With this notation,
\begin{equation}\label{form18} \mu_{x}' = \pi_{x}(\mu|_{K_{x}}), \end{equation}
and consequently $\mu(K_{x}) = \|\mu_{x}'\| \geq \delta^{2\eta}$ according to points \eqref{a}-\eqref{b}. We next observe that $\varphi_{x}(\delta) \leq \sigma + 2\eta \leq \sigma + \epsilon$, because
\begin{displaymath} \delta^{2\eta} \leq \|\mu_{x}'\| \leq \delta^{\varphi_{x}(\delta)}|\mathcal{J}_{x}| \stackrel{\eqref{form8}}{\leq} \delta^{\varphi_{x}(\delta) - \sigma}. \end{displaymath}
Next, we claim the following: for every $x \in E$, there exists a scale
\begin{equation}\label{form9} \Delta_{x} \in 2^{-\N \cdot T} \cap [\delta,\delta^{\epsilon^{3/\epsilon}}] \stackrel{\eqref{eta}}{\subset} 2^{-\N \cdot T} \cap [\delta,\delta^{\eta}] \stackrel{\eqref{form25}}{\subset} [\delta,\delta_{0}] \end{equation}
with the properties $\varphi_x(\Delta_{x}) \leq \varphi_x(\delta)$
and
\begin{equation}\label{form10} \varphi_x(\rho) \geq \varphi_x(\Delta_{x}) - \epsilon, \qquad \rho \in 2^{-\N \cdot T} \cap [\Delta_{x},\Delta_{x}^{\epsilon}]. \end{equation}
We mention a subtle point straight away: for $\Delta_{x} \in 2^{-\N \cdot T} \cap [\delta,\delta^{\epsilon^{3/\epsilon}}]$, we have
\begin{equation}\label{form27} \Delta_{x}^{\epsilon} \leq \delta^{\epsilon^{3/\epsilon + 1}} \leq \delta^{\eta} \end{equation}
by the choice of $\eta$ at \eqref{eta}. Therefore the values
"$\varphi_x(\rho)$" for $\rho \in 2^{-\N \cdot T} \cap
[\Delta_{x},\Delta_{x}^{\epsilon}]$ appearing in \eqref{form10}
are always related to the values of $\mu_{x}'(Q)$ for $Q \in
\mathcal{D}_{\rho}$ via the formula \eqref{uniformity}. This will
be needed a little later.

The scale $\Delta_{x}$ is produced by the following algorithm. First, we check if $\Delta_{x} := \Delta_{0} := \delta$ would work, and satisfy \eqref{form10}. If not, then there exists $\Delta_{1} = 2^{-k_{1}T} \in [\Delta_{0},\Delta_{0}^{\epsilon}]$ with the property
\begin{displaymath} \varphi_x(\Delta_{1}) < \varphi_x(\Delta_{0}) - \epsilon. \end{displaymath}
At this point, we check if $\Delta_{x} := \Delta_{1}$ works. If we
are still out of luck, we find $\Delta_{2} = 2^{-k_{2}T} \in
[\Delta_{1},\Delta_{1}^{\epsilon}]$ violating \eqref{form10}.
Continuing in this manner, we construct an increasing sequence of
scales $\Delta_{0} \leq \Delta_{1} \leq \ldots$ with the
properties $\Delta_{j + 1} = 2^{-k_{j + 1}T} \in
[\Delta_{j},\Delta_{j}^{\epsilon}]$, and $\varphi_x(\Delta_{j + 1})
\leq \varphi_x(\Delta_{j}) - \epsilon$. Since on the other hand
$\varphi_x(\Delta_{0}) \leq 3$, and $\varphi_x(\rho) \geq 0$ for
all $\rho = 2^{-kT}$, the inequality $\varphi_x(\Delta_{j + 1})
\leq \varphi_x(\Delta_{j}) - \epsilon$ can only hold for $\leq
3/\epsilon$ consecutive indices "$j$". In other words, the
algorithm must terminate in $N \leq 3/\epsilon$ steps. At this
stage, the scale $\Delta_{x} = \delta_{N} = 2^{-k_{N}T}$ satisfies
\eqref{form9}-\eqref{form10}.

Now that $\Delta_{x}$ has been found and fixed, we set
\begin{displaymath} \Sigma_{x} := \varphi_x(\Delta_{x}). \end{displaymath}
The number
\begin{equation}\label{Sigma} \Sigma_{x} \leq \varphi_{x}(\delta) \leq \sigma + \epsilon \leq 1 \end{equation}
is a candidate for the number "$\Sigma$" appearing in the statement of the proposition. Let $\mathcal{I}_{x} \subset \mathcal{D}_{\Delta_{x}}$ be the collection of dyadic $\Delta_{x}$-arcs $I \subset S^{1}$ satisfying $\mu_{x}'(I) > 0$, or equivalently (by the $(\Phi,T,\eta)$-regularity of $\mu_{x}'$)
\begin{equation}\label{form13} \Delta_{x}^{\Sigma_{x} + \eta} \leq \mu_{x}'(I) \leq \Delta_{x}^{\Sigma_{x}}. \end{equation}
Here it is important to note that $\Delta_{x} \leq \delta^{\eta}$, so the $(\Phi,T,\eta)$-regularity of $\mu_{x}'$ still says something, namely \eqref{form13}, about arcs of length $\Delta_{x}$. We also set $S_{x} := \cup \mathcal{I}_{x}$. It follows immediately from $\|\mu_{x}'\| \leq \|\mu\| \leq 1$, and the lower bound in \eqref{form13} that $|S_{x}|_{\Delta_{x}} \sim |\mathcal{I}_{x}| \leq \Delta_{x}^{-\Sigma_{x} - \eta}$. We then make the following claim:
\begin{claim}\label{SClaim} If $\delta > 0$ is small enough, then $S_{x}$ is a $(\Delta_{x},\Sigma_{x},\Delta_{x}^{-2\epsilon})$-set.
%\begin{equation}\label{form26} |S_{x}|_{\Delta_{x}} \sim |\mathcal{I}_{x}| \geq \delta^{2\eta}\Delta_{x}^{-\Sigma_{x}} \geq \Delta_{x}^{-\Sigma_{x} + \epsilon}. \end{equation}
\end{claim}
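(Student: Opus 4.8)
The plan is to verify the defining inequality of a $(\Delta_{x},\Sigma_{x},\Delta_{x}^{-2\epsilon})$-set directly: for an arbitrary arc $J \subset S^{1}$ of length $r \geq \Delta_{x}$ one must show $|S_{x} \cap J|_{\Delta_{x}} \leq \Delta_{x}^{-2\epsilon} r^{\Sigma_{x}} |S_{x}|_{\Delta_{x}}$. The first step is a reduction. Since $S_{x}$ is a union of disjoint dyadic $\Delta_{x}$-arcs, $|S_{x}|_{\Delta_{x}} \sim |\mathcal{I}_{x}|$, and $|S_{x} \cap J|_{\Delta_{x}}$ is comparable to the number of $I \in \mathcal{I}_{x}$ meeting $J$; each such $I$ lies in a fixed neighbourhood $J(\Delta_{x})$, so by the lower bound $\mu_{x}'(I) \geq \Delta_{x}^{\Sigma_{x}+\eta}$ in \eqref{form13} and disjointness of the arcs this count is $\lesssim \mu_{x}'(J(\Delta_{x})) \cdot \Delta_{x}^{-\Sigma_{x}-\eta}$. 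Thus everything reduces to: (a) an upper bound for $\mu_{x}'$ on short neighbourhoods of arcs, and (b) a lower bound for $|\mathcal{I}_{x}|$. For (b) I would combine $\|\mu_{x}'\| \geq \delta^{2\eta}$ with $\mu_{x}'(I) \leq \Delta_{x}^{\Sigma_{x}}$ to get $|\mathcal{I}_{x}| \geq \delta^{2\eta}\Delta_{x}^{-\Sigma_{x}}$, and then write $\Delta_{x} = \delta^{\theta}$ with $\theta \geq \epsilon^{3/\epsilon}$ by \eqref{form9}; the choice of $\eta$ in \eqref{eta} forces $\delta^{2\eta} = \Delta_{x}^{2\eta/\theta} \geq \Delta_{x}^{\epsilon/2}$, hence $|\mathcal{I}_{x}| \geq \Delta_{x}^{\epsilon/2-\Sigma_{x}}$.

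Next I would split into two ranges of $r$. When $r$ is comparatively large, say $r \geq \Delta_{x}^{\epsilon}/C_{T}$ for an appropriate constant $C_{T}$ depending only on $T$, the inequality is essentially free: since $\Sigma_{x} \leq 1$ we get $r^{\Sigma_{x}} \gtrsim_{T} \Delta_{x}^{\epsilon}$, so $\Delta_{x}^{-2\epsilon} r^{\Sigma_{x}} |S_{x}|_{\Delta_{x}} \gtrsim_{T} \Delta_{x}^{-\epsilon}|\mathcal{I}_{x}|$, which dominates the trivial bound $|S_{x}\cap J|_{\Delta_{x}} \lesssim |\mathcal{I}_{x}|$ once $\delta$, and hence $\Delta_{x} \leq \delta^{\epsilon^{3/\epsilon}}$, is small enough. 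When $\Delta_{x} \leq r < \Delta_{x}^{\epsilon}/C_{T}$, I would let $\rho \in 2^{-\N \cdot T}$ be the smallest admissible scale exceeding $C_{T} r$; then $\Delta_{x} < \rho < \Delta_{x}^{\epsilon}$, so on one hand $\rho \leq \Delta_{x}^{\epsilon} \leq \delta^{\eta}$ by \eqref{form27}, which is exactly what makes the $(\Phi,T,\eta)$-uniformity of $\mu_{x}'$ have content at scale $\rho$, and on the other hand $\rho \in [\Delta_{x},\Delta_{x}^{\epsilon}]$, so \eqref{form10} gives $\varphi_{x}(\rho) \geq \Sigma_{x} - \epsilon$. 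Covering $J(\Delta_{x})$ by a bounded number of dyadic $\rho$-arcs yields $\mu_{x}'(J(\Delta_{x})) \lesssim \rho^{\varphi_{x}(\rho)} \lesssim \rho^{\Sigma_{x}-\epsilon} \lesssim_{T} r^{\Sigma_{x}-\epsilon}$. Feeding this and $|\mathcal{I}_{x}| \geq \Delta_{x}^{\epsilon/2-\Sigma_{x}}$ into the reduction, and using $r \geq \Delta_{x}$, the target inequality collapses to $C(T) \leq \Delta_{x}^{-(\epsilon/2-\eta)}$, which holds for $\delta$ small since $\epsilon/2 - \eta > 0$ by \eqref{eta}.

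The one genuinely delicate point — and where I expect to have to be careful — is the simultaneous handling of the three constraints on the auxiliary scale $\rho$: it must be at least $\Delta_{x}$ and at most $\Delta_{x}^{\epsilon}$ for \eqref{form10} to apply, and at most $\delta^{\eta}$ for the uniformity of $\mu_{x}'$ to say anything at that scale, the last being a consequence of the second via \eqref{form27}. This is precisely why the "large $r$" regime has to start already at $r \sim \Delta_{x}^{\epsilon}/C_{T}$ rather than at $r \sim \Delta_{x}^{\epsilon}$: the constant $C_{T}$ must absorb both the enlargement of $J$ to $J(\Delta_{x})$ and the multiplicative gap $2^{T}$ between consecutive admissible scales, and the resulting sliver $r \in [\Delta_{x}^{\epsilon}/C_{T},\Delta_{x}^{\epsilon})$ gets swept into the trivial regime. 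Everything else is routine bookkeeping with the Frostman exponent of $\mu$ and with the numerology fixed in \eqref{eta}.
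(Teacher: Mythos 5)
Your argument is correct and follows essentially the same route as the paper. You split at roughly $r \sim \Delta_x^{\epsilon}$ into a trivial regime (using $\Sigma_x \le 1$ and the lower bound $|\mathcal{I}_x|\gtrsim \Delta_x^{\epsilon/2-\Sigma_x}$ coming from $\|\mu_x'\|\ge\delta^{2\eta}$ together with \eqref{eta},\eqref{form9}) and a small-radius regime where the $(\Phi,T,\eta)$-uniformity and \eqref{form10} give $\mu_x'\big(J(\Delta_x)\big)\lesssim_T r^{\Sigma_x-\epsilon}$, which is exactly the paper's two cases; the only cosmetic differences are that the paper rounds $\rho$ \emph{down} to the largest admissible scale $\bar\rho\le\rho$ (so the membership $\bar\rho\in[\Delta_x,\Delta_x^\epsilon]$ is automatic and the $2^T$ loss appears in the covering) whereas you round \emph{up} and must enlarge the trivial regime by an extra $2^T$ as you correctly note, and the paper estimates $\mu_x'\big(B(e,2\rho)\big)$ in place of your $\mu_x'\big(J(\Delta_x)\big)$.
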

These facts are a precedent of the property \eqref{iii} in the statement of the proposition. First, we observe the following lower bound for the $\Delta_{x}$-covering number of $S_{x}$:
\begin{displaymath} \delta^{2\eta} \leq \|\mu_{x}'\| \stackrel{\eqref{form13}}{\leq} |\mathcal{I}_{x}| \cdot \Delta_{x}^{\Sigma_{x}} \quad \Longrightarrow \quad |S_{x}|_{\Delta_{x}} \sim |\mathcal{I}_{x}| \geq \delta^{2\eta} \cdot \Delta_{x}^{-\Sigma_{x}}. \end{displaymath}
It might appear problematic here that "$\delta$" is potentially much smaller than $\Delta_{x}$, but one should recall from \eqref{form9} that $\Delta_{x} \leq \delta^{\epsilon^{3/\epsilon}}$, and in \eqref{eta} we chose $\eta < \epsilon^{3/\epsilon + 1}/4$. Therefore,
\begin{equation}\label{form11}
\delta^{2\eta} \geq \Delta_{x}^{2\eta \cdot \epsilon^{-3/\epsilon}} \geq \Delta_{x}^{\epsilon/2} \quad \Longrightarrow \quad |S_{x}|_{\Delta_{x}} \gtrsim \Delta_{x}^{-\Sigma_{x} + \epsilon/2}.
\end{equation}
Now, to prove that $S_{x}$ is $(\Delta_{x},\Sigma_{x},\Delta_{x}^{-2\epsilon})$-set, fix $e \in S^{1}$ and $\Delta_{x} \leq \rho \leq 1$. If $\rho > \Delta_{x}^{\epsilon}$, we simply use $\Sigma_{x} \leq \sigma + \epsilon \leq 1$ (recall \eqref{Sigma}) to estimate
\begin{displaymath} |S_{x} \cap B(e,\rho)|_{\Delta_{x}} \leq |S_{x}|_{\Delta_{x}} \stackrel{\eqref{form13}}{\lesssim} \Delta_{x}^{-\Sigma_{x} - \eta} \leq \frac{\rho}{\Delta_{x}^{\epsilon}} \cdot \Delta_{x}^{-\Sigma_{x} - \eta} \stackrel{\eqref{form11}}{\lesssim} \Delta_{x}^{-3\epsilon/2 - \eta} \cdot \rho^{\Sigma_{x}} |S_{x}|_{\Delta_{x}}. \end{displaymath}
In particular, $|S_{x} \cap B(e,\rho)|_{\Delta_{x}} \leq \Delta_{x}^{-2\epsilon} \cdot \rho^{\Sigma_{x}}|S_{x}|_{\Delta_{x}}$, if $\delta > 0$ is small enough. On the other hand, if $\Delta_{x} \leq \rho \leq \Delta_{x}^{\epsilon}$, then $\rho \leq \delta^{\eta}$, as noted at \eqref{form27}. This means that the $(\Phi,T,\eta)$-regularity of $\mu_{x}'$ still says something useful at scale $\rho$. To make this precise, let $\bar{\rho} = 2^{-kT}$ satisfy $\Delta_{x} \leq \bar{\rho} \leq \rho \leq 2^{T}\bar{\rho}$. Then $\bar{\rho} \in 2^{-\N \cdot T} \cap [\Delta_{x},\Delta_{x}^{\epsilon}]$, so $\varphi_{x}(\bar{\rho}) \geq \Sigma_{x} - \epsilon$ by \eqref{form10}. Consequently
\begin{displaymath} \mu_{x}'(B(e,2\rho)) \lesssim_{T} \max_{Q \in \mathcal{D}_{\bar{\rho}}} \mu_{x}'(Q) \stackrel{\eqref{uniformity}}{\leq} \bar{\rho}^{\varphi(\bar{\rho})} \stackrel{\eqref{form10}}{\leq} \bar{\rho}^{-\epsilon}\rho^{\Sigma_{x}} \leq \Delta_{x}^{-\epsilon}\rho^{\Sigma_{x}}. \end{displaymath}
Since on the other hand $\mu_{x}'(I) \geq \Delta_{x}^{\Sigma_{x} + \eta}$ for all $I \in \mathcal{I}_{x}$, recall \eqref{form13}, we see that
\begin{displaymath} |S_{x} \cap B(e,\rho)|_{\Delta_{x}} \lesssim \frac{\mu_{x}'(B(e,2\rho))}{\Delta_{x}^{\Sigma_{x} + \eta}}
 \lesssim_{T} \Delta_{x}^{-\Sigma_{x} - \epsilon - \eta} \rho^{\Sigma_{x}} \stackrel{\eqref{form11}}{\lesssim}
  \Delta_{x}^{-3\epsilon/2 - \eta} |S_x|_{\Delta_{x}}. \end{displaymath}
Since $\eta < \epsilon/4$, we may finally estimate $C_{T}\Delta_{x}^{-\eta} \leq \Delta_{x}^{-\epsilon/2}$ for $\delta > 0$ small enough. This completes the proof of the $(\Delta_{x},\Sigma_{x},\Delta_{x}^{-2\epsilon})$-property of $S_{x}$.

To make the connection to property \eqref{iii} in the statement of Proposition \ref{prop2} clearer, we now define a family of $\Delta_{x}$-tubes $\mathcal{T}_{x}$, all containing $x$. Recall that $K \subset B(1)$ and $\dist(K,E) = D > 0$. This implies that each set of the form
\begin{displaymath} K \cap \pi_{x}^{-1}(I), \qquad I \in \mathcal{I}_{x}, \end{displaymath}
can be covered by a single tube $T = T_{I}$ of width $\sim_{D} \Delta_{x}$ containing the point $x$. We let $\mathcal{T}_{x}$ be the collection of tubes so obtained, for all $I \in \mathcal{I}_{x}$. We record an upper bound for $|\mathcal{T}_{x}|$. The intersections $\{K \cap T : T \in \mathcal{T}_{x}\}$ have bounded overlap, with overlap constant depending on "$D$". Moreover, each intersection $K \cap T$ has $\mu$ measure no smaller than $\mu_{x}'(I) \geq \Delta_{x}^{\Sigma_{x} + \eta}$ (for some $I \in \mathcal{I}_{x}$). It follows that
\begin{equation}\label{form36} |\mathcal{T}_{x}| \lesssim_{D} \Delta_{x}^{-\Sigma_{x} - \eta}. \end{equation}
By the bounded overlap property mentioned above, each intersection $K \cap T$ with $T \in \mathcal{T}_{x}$ only meets $\lesssim_{D} 1$ pre-images $\pi_{x}^{-1}(I)$, $I \in \mathcal{I}_{x}$. This yields the following upper bound, which will be needed somewhat later:
\begin{equation}\label{form19} \mu(K_{x} \cap T) \lesssim_{D} \max_{I \in \mathcal{I}_{x}} (\mu|_{K_{x}})(\pi_{x}^{-1}(I)) \stackrel{\eqref{form18}}{=} \max_{I \in \mathcal{I}_{x}} \mu_{x}'(I) \stackrel{\eqref{form13}}{\leq} \Delta_{x}^{\Sigma_{x}}, \qquad T \in \mathcal{T}_{x}, \, x \in E. \end{equation}
We note that $\mathcal{T}_{x}$ is a
$(\Delta_{x},\Sigma_{x},\Delta_{x}^{-3\epsilon})$-set of
$O_{D}(\Delta_{x})$-tubes containing $x$, if $\delta > 0$ is small
enough. This is just a restatement of Claim \ref{SClaim}. We will
fix later the small issue that $\mathcal{T}_{x}$ consists of
$O_{D}(\Delta_{x})$-tubes instead of $\Delta_{x}$-tubes.

Our next task will be to eliminate the $x$-dependence of the scale $\Delta_{x}$, and the exponent $\Sigma_{x}$, by further pigeonholing. Recall that $\Delta_{x} \in 2^{-\N}$, and $\delta \leq \Delta_{x} \leq \delta^{\epsilon^{3/\epsilon}}$ by \eqref{form9}. Also, recall that the choice of the initial scale $\delta = \delta_{x} > 0$ was common for all $x \in E_{0} \subset E$, where $\nu(E_{0}) \gtrsim (\log (1/\delta))^{-2}$. Therefore, by the pigeonhole principle, there exists a fixed dyadic scale $\Delta \in [\delta,\delta^{\epsilon^{3/\epsilon}}]$, and a subset $E' \subset E_{0} \subset \spt \nu$ with the properties
\begin{equation}\label{form53} \nu(E') \gtrsim (\log \tfrac{1}{\delta})^{-4} \geq \epsilon^{12/\epsilon} (\log \tfrac{1}{\Delta})^{-1} \quad \text{and} \quad \Delta_{x} = \Delta \text{ for all } x \in E'. \end{equation}
Now, even though $\Delta_{x} = \Delta$ is common for all $x \in E'$, the exponents $\Sigma_{x} \in [0,1]$ still depend on $x$. To fix this, we use the pigeonhole principle to pick a further subset $E'' \subset E'$ of measure $\nu(E'') \geq \epsilon \cdot \nu(E')$, and a number $\Sigma \in [0,1]$ (actually $\Sigma \leq \sigma + \epsilon$) such that
\begin{equation}\label{form15} \Sigma - \epsilon \leq \Sigma_{x} \leq \Sigma, \qquad x \in E''. \end{equation}
Evidently $\nu(E'') \gtrsim_{\epsilon} (\log (1/\Delta))^{-1}$. To simplify notation in the sequel, we keep denoting $E''$ by $E$; we just have to keep in mind that $\nu(E) \gtrsim_{\epsilon} (\log (1/\Delta))^{-1}$ with the new notation. With the new notation, and recalling \eqref{form13}, we find
\begin{displaymath} (\pi_{x}\mu)(I) \geq \mu_{x}'(I) \geq \Delta_{x}^{\Sigma_{x} + \eta} \geq \Delta^{\Sigma + \epsilon}, \qquad x \in E, \, T \in \mathcal{I}_{x}. \end{displaymath}
In particular, the tubes $T \in \mathcal{T}_{x}$ (each of which contained a set of the form $K \cap \pi_{x}^{-1}(I)$) satisfy
\begin{equation}\label{form14} \mu(T) \geq \Delta^{\Sigma + \epsilon}, \qquad x \in E, \, T \in \mathcal{T}_{x}. \end{equation}
We recall that $\mathcal{T}_{x}$, $x \in E$, was already shown to be a $(\Delta_{x},\Sigma_{x},\Delta_{x}^{-3\epsilon})$-set. Since $\Delta_{x} = \Delta$ and $\Sigma_{x} \in (\Sigma - \epsilon,\Sigma)$ for all $x \in E$  (by \eqref{form15}), it follows that $\mathcal{T}_{x}$ is a $(\Delta,\Sigma,\Delta^{-4\epsilon})$-set.

We then begin to seek the sets $P_{E}$ and $P_{K}$ appearing in \eqref{i}--\eqref{iii}. We start with $P_{E}$. By one more application of the pigeonhole principle, the set
\begin{displaymath} E^{v} := \{x \in E : v/2 \leq \nu(B(x,\Delta)) \leq v\} \end{displaymath}
satisfies $\nu(E^{v}) \approx_{\Delta,\epsilon} 1$ for some dyadic rational $v \leq \Delta^{s}$. Here the "$\approx_{\Delta}$" notation hides constants of the form $C(\log (1/\Delta))^{C}$, where $C = C(\|\nu\|,\epsilon)$. We let $P_{E} \subset E^{v}$ be a maximal $\Delta$-separated set. Then evidently $|P_{E}| \approx_{\Delta,\epsilon} v^{-1}$, and
\begin{equation}\label{form6} |P_{E} \cap B(x,r)| \lesssim \frac{\nu(B(x,2r))}{v} \lessapprox_{\Delta,\epsilon} r^{s}|P_{E}|, \qquad x \in \R^{2}, \, r \geq \Delta. \end{equation}
Therefore, if $\delta > 0$ is small enough, we see that $P_{E} \subset E$ is a non-empty $(\Delta,s,\Delta^{-\epsilon})$-set.

Finding the set $P_{K} \subset K$ proceeds similarly, although
with one small additional complication. Namely, we start by
considering subsets of $K_{x}$, instead of $K$ directly. Here
$K_{x}$ was the set defined in \eqref{form12}, with $\mu(K_{x})
\geq \delta^{2\eta}$. For every $x \in P_{E}$, we choose (by the
pigeonhole principle) a dyadic rational $w(x) \leq \Delta^{t}$
with the property that the set
\begin{displaymath} K_{x}^{w(x)} := \{x \in K_{x} : w(x)/2 \leq \mu(B(x,\Delta)) \leq w(x)\} \end{displaymath}
satisfies
\begin{equation}\label{form35} \mu(K_{x}^{w(x)}) \gtrapprox_{\Delta} \mu(K_{x}) \geq \delta^{2\eta} \stackrel{\eqref{form9}}{\geq} \Delta^{2\eta \cdot \epsilon^{-3/\epsilon}} \stackrel{\eqref{eta}}{\geq} \Delta^{\epsilon}. \end{equation}
Next, we wish to make $w(x)$ independent of $x \in P_{E}$. This is achieved by more pigeonholing: there exists a dyadic rational $w \leq \Delta^{t}$, and a subset $P_{E}' \subset P_{E}$ with $|P_{E}'| \approx_{\Delta} |P_{E}|$ such that $w(x) = w$ for all $x \in P_{E}'$. Then $P_{E}'$ is a $(\Delta,s,\Delta^{-2\epsilon})$-set, if $\delta > 0$ is small enough, and we redefine $P_{E} := P_{E}'$. The set "$P_{E}$" is the final set appearing in the statement of the proposition.

Finally, we let $P_{K} \subset K$ be a maximal $\Delta$-separated set in
\begin{displaymath} K^{w} := \{x \in K : w/2 \leq \mu(B(x,\Delta)) \leq w\}. \end{displaymath}
Evidently $K^{w} \supset K_{x}^{w}$ for every $x \in P_{E}$, so $\mu(K^{w}) \geq \mu(K_{x}^{w}) \geq \Delta^{2\epsilon}$ for every $x \in P_{E}$, by \eqref{form35}. Now
\begin{equation} \label{Pk}
w^{-1}\Delta^{2\epsilon} \lesssim |P_{K}| \lesssim w^{-1},
\end{equation} and a calculation similar to that in \eqref{form6}, now using the inequality $\mu(B(x,r)) \leq r^{t}$, shows that $P_{K}$ is a non-empty $(\Delta,t,\Delta^{-3\epsilon})$-set.

What remains is to conclude the proof of property \eqref{iii} in the statement of Proposition \ref{prop2} -- to demonstrate that $|T \cap P_{K}| \geq \Delta^{\Sigma + O(\epsilon)}|P_{K}|$ for all $T \in \mathcal{T}_{x}$, and for all $x \in P_{E}$. To be accurate, this will be true for all $T$ in a large sub-family $\mathcal{T}_{x}' \subset \mathcal{T}_{x}$.

Fix $x \in P_{E}$. We recall from \eqref{form19} that $\mu(K_{x} \cap T) \lesssim_{D} \Delta_{x}^{\Sigma_{x}} \leq \Delta^{\Sigma - \epsilon}$ for all $T \in \mathcal{T}_{x}$, for $x \in P_{E} \subset E$. (It seems difficult to maintain this upper bound for $K \cap T$ instead of $K_{x} \cap T$, and this is the main reason for working with the sets "$K_{x}$" all the way down here.) Moreover, recall from \eqref{form36} that $|\mathcal{T}_{x}| \lesssim_{D} \Delta_{x}^{-\Sigma_{x} - \eta}$, and consequently $|\mathcal{T}_{x}| \leq \Delta^{-\Sigma - \epsilon}$ for all $x \in E$, and $\delta > 0$ small enough. Since the sets $K_{x} \cap T$, $T \in \mathcal{T}_{x}$, cover $K_{x}^{w}$, and $\mu(K_{x}^{w}) \geq \Delta^{2\epsilon}$, it follows that there exists a subset $\mathcal{T}_{x}' \subset \mathcal{T}_{x}$ of cardinality $|\mathcal{T}_{x}'| \geq \Delta^{-\Sigma + \epsilon} \geq \Delta^{2\epsilon}|\mathcal{T}_{x}|$ such that
\begin{equation}\label{form7}
\mu(K^{w}\cap T) \geq \mu(K_{x}^{w}\cap T) \geq \Delta^{\Sigma + 3\epsilon}, \qquad T \in \mathcal{T}_{x}'.
\end{equation}
Since $\mathcal{T}_{x}$ was a $(\Delta,\Sigma,\Delta^{-4\epsilon})$-set, $\mathcal{T}_{x}'$ is a $(\Delta,\Sigma,\Delta^{-6\epsilon})$-set. Finally, note that by \eqref{form7}, and the definitions of $K^{w}$ and $P_{K}$, every tube $2T$ with $T \in \mathcal{T}_{x}'$ satisfies
\begin{displaymath}
w \cdot |2T \cap P_{K}| \gtrsim \mu(K^{w}\cap T) \geq \Delta^{\Sigma + 3\epsilon} \quad \Longrightarrow \quad |2T \cap P_{K}| \gtrsim \Delta^{\Sigma + 3\epsilon}/w \overset{\eqref{Pk}}{\gtrsim} \Delta^{\Sigma + 3\epsilon}|P_{K}|.
\end{displaymath}
Therefore the family $\{2T : T \in \mathcal{T}_{x}'\}$ satisfies property \eqref{iii} of Proposition \ref{prop2}, except that the tubes in this family have thickness $O_{D}(\Delta)$ instead of $\Delta$. However, at this point it is simple to cover each tube $2T$, $T \in \mathcal{T}_{x}'$, by $\sim_{D} 1$ tubes of width $\Delta$, and pick one of them, say $T'$, which still satisfies $|T' \cap P_{K}| \geq \Delta^{\Sigma + 4\epsilon}|P_{K}|$ (for $\delta > 0$ small enough). The family of $\Delta$-tubes $T'$ so obtained satisfies property \eqref{iii}, and this concludes the proof the proposition. \end{proof}

\section{Proof of Theorem \ref{main2}}\label{s:discrete}

In this section, we complete the proof of Theorem \ref{main2}. The result will be derived from a recent incidence theorem of
Fu and Ren \cite[Theorem 4.8]{2021arXiv211105093F} concerning $(\delta,s)$-sets of points and $(\delta,t)$-sets of tubes. The theorem of Fu and Ren is formulated in terms of a slightly different (and more classical) notion of $(\delta,s)$-sets. We start by stating this definition, and then we explore the connection to (our) $(\delta,s)$-sets.

\begin{definition}[Katz-Tao $(\delta,s)$-set] Let $(X,d)$ be a metric space. We say that a $\delta$-separated set $P \subset X$ is a \emph{Katz-Tao $(\delta,s,C)$-set} if
\begin{displaymath} |P \cap B(x,r)| \leq C\left(\frac{r}{\delta} \right)^{s}, \qquad x \in \R^{d}, \, r \geq \delta. \end{displaymath}
\end{definition}

As the name suggests, the Katz-Tao $(\delta,s)$-sets were introduced by Katz and Tao \cite{MR1856956}. The next lemma shows that $(\delta,t)$-sets can be decomposed into Katz-Tao $(\delta,t)$-sets:

\begin{lemma}\label{lemma3} Let $(X,d)$ be a doubling metric space with constant $D \geq 1$.\footnote{Every ball of radius $r > 0$ can be covered by $D \in \N$ balls of radius $r/2$, with $D$ independent of $r$.} For every $\epsilon,t > 0$, there exists $\delta_{0} = \delta_{0}(\epsilon,D,t) > 0$ such that the following holds for all $\delta \in (0,\delta_{0}]$. Let $P \subset B(x_{0},1) \subset X$ be a $\delta$-separated $(\delta,t,C)$-set. Then $P$ can be written as a disjoint union
\begin{displaymath} P = P_{1} \cup \ldots \cup P_{N} \end{displaymath}
where each $P_{j}$ is a Katz-Tao $(\delta,t,1)$-set, and $N \leq C|P|\delta^{t-\epsilon}$.
\end{lemma}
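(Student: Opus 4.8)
The plan is to decompose $P$ greedily, peeling off one Katz-Tao $(\delta,t,1)$-set at a time. Since $P \subset B(x_0,1)$ is $\delta$-separated, we may identify it with a finite set, and define $P_1 \subset P$ to be a maximal subset that is a Katz-Tao $(\delta,t,1)$-set; then $P_2 \subset P \setminus P_1$ a maximal Katz-Tao $(\delta,t,1)$-subset, and so on, until $P$ is exhausted. This produces a partition $P = P_1 \cup \cdots \cup P_N$ into Katz-Tao $(\delta,t,1)$-sets; the only real content is the bound $N \leq C|P|\delta^{t-\epsilon}$.

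To bound $N$, the idea is that each peeling step can only fail to absorb more points for a \emph{local} reason. Concretely, for each $j$ and each $x \in P_j$ (with $j \geq 2$), maximality of $P_{j-1}$ inside $P \setminus (P_1 \cup \cdots \cup P_{j-2})$ forces the existence of a witness ball $B(y_x, r_x)$ with $r_x \geq \delta$ such that $B(y_x,r_x)$ already contains $\geq (r_x/\delta)^t$ points of $P_{j-1} \cup \{x\}$ (otherwise $x$ could have been added to $P_{j-1}$ without violating the Katz-Tao property). Summing over the $\sim$ dyadic scales $r = 2^k\delta$ and counting how many points $x \in P$ can have a witness at a given scale, one shows — using the $(\delta,t,C)$-set hypothesis on $P$ to control $|P \cap B(y,r)| \leq Cr^t|P| = C(r/\delta)^t \cdot (\delta^t|P|)$ — that the number of "bad" points at scale $r$ is at most $\sim C \delta^t |P|^2 / |P| = C\delta^t|P|$ up to the number of essentially distinct balls, and the extra $\delta^{-\epsilon}$ absorbs the $O(\log(1/\delta))$ loss from summing over scales and the doubling constant $D$. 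Thus at each step we remove a definite fraction, or rather: each of the $|P|$ points can be "blamed" on at most $N-1$ earlier classes, and a double-counting argument bounds the total number of (point, witness) pairs, yielding $N - 1 \lesssim_{D,\epsilon} C\delta^{t-\epsilon}|P|$.

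The cleanest way to organize this: fix $x \in P_j$ with $j \geq 2$. For each $i < j$, maximality of $P_i$ means $x$ could not be added to $P_i$, so there is a ball $B_i = B(y_i, r_i)$, $r_i \geq \delta$, with $|P_i \cap B_i| \geq (r_i/\delta)^t$ but $|P_i \cap B_i| + 1 > (r_i/\delta)^t$ after adding $x$ — i.e. $x \in B_i$ and $|P_i \cap B_i| \geq \lceil (r_i/\delta)^t \rceil - 1 \geq \tfrac12 (r_i/\delta)^t$ (for $r_i \geq 2^{1/t}\delta$; the scale $r_i = \delta$ is handled separately since then the ball contains $\geq 1$ point and $\delta$-separation makes this vacuous for $\delta$ small). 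Now sum: $\sum_{i=1}^{N-1}\sum_{x \in P} \mathbf{1}[x \text{ blamed on } P_i] = \sum_{x}(\,\text{class index of }x - 1) \geq \sum_{x : \text{class} \geq N}(N-1) \geq N - 1$, while on the other hand, grouping by the (dyadically rounded) witness scale $r$ and a bounded-overlap cover of $B(x_0,1)$ by $\lesssim_D r^{-d}$ balls of radius $r$, the left side is $\lesssim_D \log(1/\delta) \cdot r^{-d} \cdot C r^t |P|$ per... — and here one must be a little careful that distinct $i$ with the same witness ball are genuinely charging disjoint sets $P_i$, so the count $\sum_i |P_i \cap B(y,r)| \leq |P \cap B(y,r)| \leq Cr^t|P|$ telescopes correctly. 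Putting the pieces together gives $N - 1 \lesssim_D \log(1/\delta) \cdot C\delta^t |P| \leq C\delta^{t-\epsilon}|P|$ once $\delta_0$ is small enough depending on $\epsilon, D, t$.

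The main obstacle is the bookkeeping in the double-counting step: one needs the "blame" assignment to be set up so that, for a fixed witness ball $B(y,r)$, the contributions from different classes $P_i$ add up to at most $|P \cap B(y,r)|$ rather than being counted with multiplicity. The right statement is that if $x \in P_j$ is blamed on $P_i$ via ball $B(y,r)$, then we charge $x$ to the \emph{pair} $(i, B(y,r))$, and for each fixed $B(y,r)$ the total charge over all $i$ is at most $\sum_i |P_i \cap B(y,r)| = |(\bigcup_i P_i) \cap B(y,r)| \le |P \cap B(y,r)|$ since the $P_i$ are disjoint — but this requires that a single point $x$, when blamed on $P_i$, really "sits inside" the tally $|P_i \cap B(y,r)|$, which is why the witness ball must contain $\gtrsim (r/\delta)^t$ points \emph{of $P_i$}. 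Getting this inequality chain exactly right, and verifying the edge scale $r = \delta$ and the passage from "maximal Katz-Tao $(\delta,t,1)$" to "cannot add $x$" is clean, is where the care is needed; everything else is routine dyadic pigeonholing and the doubling property.
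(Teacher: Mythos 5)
Your greedy-peeling strategy is genuinely different from the paper's argument (the paper partitions $P\cap B$ into groups of one fixed size $H=4^{t+1}C|P|\delta^{t}$ for balls $B$ in bounded-overlap covers at every dyadic scale, joins points in a common group by graph edges, bounds the maximum degree by $\lesssim_{D}H\log(1/\delta)$, and applies Brooks' theorem, each colour class meeting every group at most once and hence being Katz--Tao). Your strategy can be made to work, but the double-counting step, as written, does not close, and this is a genuine gap. The sentence that should deliver the upper bound on the number of blame pairs trails off, and the repair in your last paragraph is incorrect: for a fixed witness ball $B(y,r)$, the total charge over all $i$ counts pairs $(x,i)$, which can be as large as (number of blamable classes) times $|P\cap B(y,r)|$; it is not bounded by $\sum_{i}|P_{i}\cap B(y,r)|\le|P\cap B(y,r)|$, because the blamed point $x$ belongs to a later class and never "sits inside" the tally $|P_{i}\cap B(y,r)|$. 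Worse, even if that bound held, summing it over the bounded-overlap covers at all scales would only give $N-1\lesssim_{D}|P|\log(1/\delta)$, far weaker than the required $N\le C|P|\delta^{t-\epsilon}$; the global count with the $r^{-d}$ covering factor is similarly lossy. (Also, the claim that the edge scale $r\sim\delta$ is "vacuous by $\delta$-separation" is not right, though harmless: there the witness simply gives $|P_{i}\cap B|\ge1\gtrsim(r/\delta)^{t}$.)

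The count does close if you localise to a single point rather than summing over all of $P$. Pick one point $x^{*}$ in the last class $P_{N}$. For each $i<N$, maximality of $P_{i}$ yields a witness ball $B_{i}=B(y_{i},r_{i})\ni x^{*}$ with $\delta\le r_{i}\lesssim1$ and $|P_{i}\cap B_{i}|\ge\tfrac12(r_{i}/\delta)^{t}$ (valid at every scale, including $r_{i}\sim\delta$). Round $r_{i}$ to a dyadic $\rho$ and observe $B_{i}\subset B(x^{*},2\rho)$. For fixed $\rho$, disjointness of the classes gives $\#\{i:r_{i}\sim\rho\}\cdot\tfrac12(\rho/\delta)^{t}\le|P\cap B(x^{*},2\rho)|\lesssim_{D}C\rho^{t}|P|$ by the $(\delta,t,C)$-property (cardinalities and $\delta$-covering numbers being comparable via doubling), hence $\#\{i:r_{i}\sim\rho\}\lesssim_{D,t}C\delta^{t}|P|$. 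Summing over the $\lesssim\log(1/\delta)$ dyadic scales gives $N\lesssim_{D,t}C\delta^{t}|P|\log(1/\delta)\le C\delta^{t-\epsilon}|P|$ once $\delta\le\delta_{0}(\epsilon,D,t)$. Note that no covering families $\mathcal{B}_{r}$ and no $r^{-d}$ factors are needed: the essential step is the division by the per-class occupancy $(\rho/\delta)^{t}$, which is exactly what your sketch never carries out and what produces the factor $\delta^{t}$.
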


\begin{proof} The method is the same as in the proof of \cite[Proposition 4.5]{2021arXiv211105093F}. For each dyadic rational $r \in [\delta,2]$, let $\mathcal{B}_{r}$ be a cover of $B(x_{0},1)$ by balls of radius $r$ with overlap $\lesssim_{D} 1$, and with the property that every ball of radius $\leq r/2$ intersecting $B(x_0,1)$ is contained in at least one of the balls in $\mathcal{B}_{r}$. To construct $\mathcal{B}_{r}$, choose a maximal $(r/2)$-separated set $X_{r} \subset B(x_0,1)$, and set $\mathcal{B}_{r} := \{B(x,r) : x \in X_{r}\}$. The bounded overlap of $\mathcal{B}_{r}$ follows from the doubling hypothesis of $X$. Indeed, if some $x \in B(x_0,1)$ lies in $N$ distinct of $\mathcal{B}_{r}$, then the centres of these balls form an $(r/2)$-separated subset of $B(x,2r)$ of cardinality $N$. By the doubling hypothesis, $N \lesssim_{D} 1$.

We then begin the proof in earnest. Assume that $P \neq \emptyset$, otherwise there is nothing to prove. Set
\begin{equation}\label{defH} H := 4^{t+1} C|P|\delta^{t} \geq 1. \end{equation}
The lower bound "$\geq 1$" follows from the assumption that $P \neq \emptyset$ is a $(\delta,t,C)$-set. Fix $r \in 2^{-\N}$ with $r \in [\delta,1]$ and $B \in \mathcal{B}_{r}$. We divide the points in $P \cap B$ into $m(B) = \ceil{|P \cap B|/H}$ groups $G^{r}_{1}(B),\ldots,G^{r}_{m(B)}(B)$ by forming as many groups of size exactly "$H$" as possible, and then one remainder group of size $\leq H$. It is of course possible that $|P \cap B| < H$: in this case $m(B) = 1$, and we only have the remainder group.

We then form a graph $G = (P,E)$, whose edge set $E \subset P \times P$ is defined as follows. For every group $G_{j}^{r}(B)$, $1 \leq j \leq m(B)$, we connect all the members of the group to each other by an edge. Then, we do this for every $B \in \mathcal{B}_{r}$, and for every dyadic rational $r \in [\delta,1]$.

What is the maximum degree of $G$? For every $x \in P$ and $r \in [\delta,1]$ fixed, $x$ is connected to every other point in its own group $G^{r}_{j}(B)$. This statement holds whenever $B \in \mathcal{B}_{r}$ contains $x$, and this may happen for $\lesssim_{D} 1$ different choices $B \in \mathcal{B}_{r}$. So, every $r \in [\delta,1]$ yields $\lesssim_{D} H$ edges incident to $x$. The number of dyadic scales $r \in [\delta,1]$ is $\sim \log (1/\delta)$, so $\max_{x \in P} \deg_{G}(x) \lesssim_{D} H\log (1/\delta)$.

As in \cite[Lemma 4.7]{2021arXiv211105093F}, we may now deduce from Brook's theorem (see \cite{MR12236} or \cite{MR396344}) that the graph $G$ admits a colouring of the vertices $P$ with $N \lesssim_{D} H\log(1/\delta)$ colours with the property that no two adjacent vertices share the same colour. The colouring induces a partition $P = P_{1} \cup \ldots \cup P_{N}$, where
\begin{displaymath} N \lesssim_{D} H\log(1/\delta) \stackrel{\eqref{defH}}{=} 4^{t+1} C|P|\delta^{t} \log(1/\delta). \end{displaymath}
In particular, $N \leq C|P|\delta^{t - \epsilon}$ if $\delta > 0$ is small enough, depending only on $\epsilon,D,t$.

It remains to check that each $P_{j}$ is a Katz-Tao $(\delta,t,1)$-set. Fix $1 \leq j \leq N$. Let $x\in X$ and $\delta \leq \rho \leq 1$. If $B(x,\rho) \cap B(x_{0},1) = \emptyset$, then $|P_{j} \cap B(x,\rho)| = 0$. Otherwise $B(x,\rho)$ is contained in one of the balls $B \in \mathcal{B}_{r}$ for some $r/4 \leq \rho \leq r/2$. By the $(\delta,t,C)$-property of $P$, we have
\begin{displaymath} |P \cap B| \leq Cr^{t}|P| \leq 4^{t}(C\rho^{t}|P|). \end{displaymath}
This implies that the number "$m(B)$" of groups $G_{1}^{r}(B),\ldots,G_{m(B)}^{r}(B)$ is at most
\begin{displaymath} m(B) = \ceil{|P \cap B|/H} \leq \ceil{4^{t}4^{-t-1}(\rho/\delta)^{t}} \leq (\rho/\delta)^{t}. \end{displaymath}
 The set $P_{j}$ contains at most one point in each of these groups, therefore $|P_{j} \cap B(x,\rho)| \leq |P_{j} \cap B| \leq (\rho/\delta)^{t}$. This completes the proof of the lemma. \end{proof}

%The following corollary is immediate:

%\begin{cor} For every $\epsilon,d,t > 0$, there exists $\delta_{0} = \delta_{0}(\epsilon,d,t) > 0$ such that the following holds for all $\delta \in (0,\delta_{0}]$. Every $\delta$-separated lower $(\delta,t,C)$-set $P \subset B(1) \subset \R^{d}$ contains an upper $(\delta,t,1)$-subset of cardinality $\geq C^{-1}\delta^{-t + \epsilon}$.
%\end{cor}

%\begin{proof} Use Lemma \ref{lemma3} to decompose $P = P_{1} \cup \ldots \cup P_{N}$, where $N \leq C|P|\delta^{t - \epsilon}$, and each $P_{j}$ is an upper $(\delta,t,1)$-set. Now at least one of the sets $P_{j}$ must have cardinality
%\begin{displaymath} |P_{j}| \geq |P|/N = C^{-1}\delta^{-t + \epsilon}. \end{displaymath}
%This is what we claimed.  \end{proof}

We next state the incidence theorem of Fu and Ren \cite[Theorem 4.8]{2021arXiv211105093F}. Given a set of points $P$ and a set of tubes $\mathcal{T}$,
we let $\mathcal{I}(P,\mathcal{T}) := \{(p,T) : p \in T\}$ be the set of \emph{incidences} between $P$ and $\mathcal{T}$.
 The original version was stated for Katz-Tao $(\delta,s)$-sets, but the version below will follow by combining the original statement with Lemma \ref{lemma3}:
\begin{thm}\label{FuRen} Let $0 \leq s,t \leq 2$. Then, for every $\epsilon > 0$, there exist $\delta_{0} = \delta_{0}(\epsilon) > 0$ such that the following holds for all $\delta \in (0,\delta_{0}]$. If $P \subset B(1)$ is a  $\delta$-separated $(\delta,s,\delta^{-\epsilon})$-set, and $\mathcal{T}$ is a $\delta$-separated $(\delta,t,\delta^{-\epsilon})$-set, then
\begin{displaymath} |\mathcal{I}(P,\mathcal{T})| \leq |P||\mathcal{T}| \cdot \delta^{\kappa(s + t - 1) - 5\epsilon}, \end{displaymath}
where $\kappa = \kappa(s,t) = \min\{1/2,1/(s + t - 1)\}$. \end{thm}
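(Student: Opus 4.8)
The plan is to deduce Theorem~\ref{FuRen} from the original Fu--Ren incidence estimate, which is stated for Katz--Tao $(\delta,s)$-sets and Katz--Tao $(\delta,t)$-sets, by decomposing our $(\delta,s,\delta^{-\epsilon})$-set of points and our $(\delta,t,\delta^{-\epsilon})$-set of tubes into controlled unions of Katz--Tao sets via Lemma~\ref{lemma3}, and then summing the incidence bound over all pairs of pieces.

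First I would recall that $\mathcal{A}(2,1)$, equipped with the metric $d_{\mathcal{A}(2,1)}$, is a doubling metric space (it is bi-Lipschitz to a bounded subset of $\R^{2}$ locally), so Lemma~\ref{lemma3} applies to tubes as well as to points. Applying Lemma~\ref{lemma3} to $P$ with exponent (say) $\epsilon$ gives a partition $P = P_{1} \cup \cdots \cup P_{M}$ into Katz--Tao $(\delta,s,1)$-sets with $M \lesssim |P|\delta^{-\epsilon}\cdot\delta^{s-\epsilon} = |P|\delta^{s - 2\epsilon}$; similarly $\mathcal{T} = \mathcal{T}_{1} \cup \cdots \cup \mathcal{T}_{N}$ into Katz--Tao $(\delta,t,1)$-sets with $N \lesssim |\mathcal{T}|\delta^{t-2\epsilon}$. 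Here I am using that a $(\delta,s,\delta^{-\epsilon})$-set in the sense of Definition~\ref{deltaSSet}, after normalising by $|P|_\delta \sim |P|$, is a $(\delta,s,C)$-set in the input form required by Lemma~\ref{lemma3} with $C \sim \delta^{-\epsilon}$; one must be mildly careful to absorb the $|P|$ normalisation, but this is exactly the content of Lemma~\ref{lemma3}'s hypothesis and conclusion.

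Next I would invoke the original Fu--Ren theorem on each pair $(P_{i},\mathcal{T}_{j})$: since each $P_{i}$ is a Katz--Tao $(\delta,s,1)$-set contained in $B(1)$ and each $\mathcal{T}_{j}$ is a Katz--Tao $(\delta,t,1)$-set, their result yields $|\mathcal{I}(P_{i},\mathcal{T}_{j})| \lesssim \delta^{-C\epsilon'}(\ldots)$, the precise form being a bound of the shape $\delta^{s+t-1}$-type counting refined by the exponent $\kappa(s+t-1)$; I would quote it in the normalisation appropriate to Katz--Tao sets (cardinalities $\le (1/\delta)^{s}$ and $\le (1/\delta)^{t}$ respectively). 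Summing over the $M \cdot N$ pairs and using $M \lesssim |P|\delta^{s-2\epsilon}$, $N \lesssim |\mathcal{T}|\delta^{t-2\epsilon}$ together with the trivial bounds $|P_i| \le \delta^{-s}$, $|\mathcal{T}_j|\le \delta^{-t}$ converts the Katz--Tao-normalised estimate into the $|P||\mathcal{T}|$-normalised estimate claimed, at the cost of finitely many extra powers of $\delta^{-\epsilon}$; choosing the $\epsilon$ fed into Lemma~\ref{lemma3} and into the original Fu--Ren theorem to be a small fixed multiple of the target $\epsilon$ then gives the exponent $\kappa(s+t-1) - 5\epsilon$ after renaming.

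The main obstacle, such as it is, is purely bookkeeping: one must track how the normalisation constants (the $|P|$, $|\mathcal{T}|$, the $\delta^{-\epsilon}$, and the $\log(1/\delta)$ factors from Lemma~\ref{lemma3}) propagate through the decomposition and the double sum, and verify that the accumulated error is absorbed into $5\epsilon$ once $\delta$ is small. There is no new geometric idea here — the geometric content is entirely in the cited Fu--Ren theorem — so the proof is a short reduction, and I would present it as such, being careful only to state explicitly that $\mathcal{A}(2,1)$ is doubling so that Lemma~\ref{lemma3} is legitimately applicable to the tube side.
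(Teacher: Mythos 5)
Your proposal is correct and follows essentially the same route as the paper: decompose $P$ and $\mathcal{T}$ into Katz--Tao pieces via Lemma~\ref{lemma3} (applied in $\R^2$ and in the doubling space $\mathcal{A}(2,1)$ respectively), invoke the original Fu--Ren incidence estimate on each pair $(P_i,\mathcal{T}_j)$, and sum over the $\lesssim |P||\mathcal{T}|\delta^{s+t-4\epsilon}$ pairs to absorb the losses into $\delta^{-5\epsilon}$. The paper executes exactly this in three lines, so no comparison is needed.
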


\begin{proof}[Proof of Theorem \ref{FuRen}] By Lemma \ref{lemma3} applied in both $\R^{2}$ and $\mathcal{A}(2,1)$, we may write
\begin{displaymath} P = P_{1} \cup \ldots \cup P_{M} \quad \text{and} \quad \mathcal{T} = \mathcal{T}_{1} \cup \ldots \cup \mathcal{T}_{N}, \end{displaymath}
where $M \leq |P|\delta^{s - 2\epsilon}$ and $N \leq |\mathcal{T}|\delta^{t - 2\epsilon}$, each $P_{j}$ is a Katz-Tao $(\delta,s,1)$-set, and each $\mathcal{T}_{j}$ is a Katz-Tao $(\delta,t,1)$-set.
 By the original version of \cite[Theorem 4.8]{2021arXiv211105093F}, we have
\begin{displaymath} |\mathcal{I}(P_{i},\mathcal{T}_{j})| \leq \delta^{-s - t} \cdot \delta^{\kappa(s + t - 1) - \epsilon}, \qquad 1 \leq i \leq M, \, 1 \leq j \leq N, \end{displaymath}
assuming that $\delta = \delta(\epsilon) > 0$ is small enough. Therefore,
\begin{displaymath} |\mathcal{I}(P,\mathcal{T})| \leq \sum_{i = 1}^{M} \sum_{j = 1}^{N} |\mathcal{I}(P_{i},\mathcal{T}_{j})| \leq MN \cdot \delta^{-s - t} \cdot \delta^{\kappa(s + t - 1) - \epsilon} \leq |P||\mathcal{T}| \cdot \delta^{\kappa(s + t - 1) - 5\epsilon}. \end{displaymath}
This concludes the proof. \end{proof}

The next lemma allows us to find $(\delta,s)$-sets inside $\delta$-discretised Furstenberg sets.

\begin{lemma}\label{lemma4} Let $s \in [0,1]$ and $t \in [0,2]$. Then, for every $\zeta > 0$, there exists $\delta_{0} = \delta_{0}(s,t,\zeta) > 0$ and $\epsilon = \epsilon(s,t,\zeta) > 0$ such that the following holds for all $\delta \in (0,\delta_{0}]$. Assume that $\mathcal{T}$ is a non-empty $(\delta,t,\delta^{-\epsilon})$-set of $\delta$-tubes in $\R^{2}$. Assume that for every $T \in \mathcal{T}$ there exists a non-empty $(\delta,s,\delta^{-\epsilon})$-set $P_{T} \subset T \cap B(1)$.
Then, the union
\begin{equation}\label{form32a} P := \bigcup_{T \in \mathcal{T}} P_{T} \end{equation}
contains a non-empty $(\delta,\gamma(s,t),\delta^{-\zeta})$-set, where
\begin{equation}\label{form31} \gamma(s,t) = s + \min\{s,t\}. \end{equation}
\end{lemma}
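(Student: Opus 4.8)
The plan is to produce the required $(\delta,\gamma,\delta^{-\zeta})$-set as a suitably refined subset of the union $P:=\bigcup_{T\in\mathcal{T}}P_T$ itself, and to reduce the whole statement to a $\delta$-discretised form of the elementary Furstenberg lower bound $\dim F\ge s+\min\{s,t\}$.

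First I would normalise: replace $P$ by a maximal $\delta$-separated subset, and pigeonhole (losing only factors $(\log\tfrac1\delta)^{O(1)}\le\delta^{-O(\epsilon)}$) so that all fibres have $|P_T|\sim p$ and $|\mathcal{T}|=M$, with $M\gtrsim\delta^{-t+\epsilon}$ and $p\gtrsim\delta^{-s+\epsilon}$. Two elementary local counts are then available: (i) any ball $B(x,r)$ with $r\in[\delta,1]$ is met by at most $\lesssim\delta^{-\epsilon}r^{t-1}M$ tubes of $\mathcal{T}$, obtained by covering the set of lines through $B(x,r)$ by $\lesssim 1/r$ balls of radius $r$ in $\mathcal{A}(2,1)$ and invoking the $(\delta,t,\delta^{-\epsilon})$-property; and (ii) $|P_T\cap B(x,r)|\le\delta^{-\epsilon}r^{s}p$ by the $(\delta,s,\delta^{-\epsilon})$-property of $P_T$. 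Combining (i) and (ii),
\[
|P\cap B(x,r)|_{\delta}\ \lesssim\ \delta^{-2\epsilon}\,r^{\,s+t-1}\,Mp\qquad (x\in\R^{2},\ r\in[\delta,1]).
\]
Since $s+t-1-\gamma=(t-s)^{+}-1$ and $Mp\,\delta^{\gamma}=\delta^{-(t-s)^{+}}$, this crude bound already yields $|P\cap B(x,r)|_{\delta}\le\delta^{-\zeta}r^{\gamma}|P|_{\delta}$ for every $r$ bounded below by a small fixed power $\delta^{c_{0}(s,t,\zeta)}$, \emph{provided} we know the covering-number bound $|P|_{\delta}\gtrsim\delta^{-\gamma+O(\epsilon)}$ (equivalently, that the average number $\mu$ of tubes through a $\delta$-ball of $P$ satisfies $\mu\lesssim\delta^{-(t-s)^{+}-O(\epsilon)}$). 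Thus everything reduces to: (A) the covering-number bound, and (B) the refinement of the concentration estimate down to scale $\delta$, i.e. $|P'\cap B(x,r)|_{\delta}\le\delta^{-\zeta}r^{\gamma}|P'|_{\delta}$ for all $r\in[\delta,1]$ for a suitable $P'\subseteq P$ with $|P'|_{\delta}\sim|P|_{\delta}$.

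Both (A) and (B) are $\delta$-discretised instances of the bound $\dim F\ge s+\min\{s,t\}$, and I would prove them together by induction on the number of dyadic scales between $\delta$ and $1$. In the inductive step one either finds no ball $B(x,r)$, $r\le\delta^{c_{0}}$, carrying more than a $\delta^{-\zeta/2}r^{\gamma}$-fraction of $P$ — in which case, together with the crude bound for $r\ge\delta^{c_{0}}$ and the cardinality bound, a maximal $\delta$-separated subset of $P$ already works — or one passes to such an overly-concentrated ball $B(x,r)$, rescales by $r^{-1}$, and applies the inductive hypothesis to the sub-configuration it contains: the fibres $P_T\cap B(x,r)$ rescale to $(\delta/r,s)$-sets, and the tubes surviving in $B(x,r)$ rescale to a $(\delta/r,t)$-set up to $\delta^{-O(\epsilon)}$ factors, so the sub-configuration has the same parameters $(s,t)$. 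Multiplying the gains from the coarse scale and from inside a cell reproduces $\gamma=s+\min\{s,t\}$: at the coarse scale one gains $\min\{s,t\}$ from the spread of tube \emph{directions}, while within a cell one gains the full $s$ from the fibres. (When $t\le s$ the coarse gain is $t$ and can be seen directly — tubes through a common small ball have directions in a tiny arc, hence are nearly parallel, and their $(\delta,t)$-distributed offsets then prevent piling up; when $s<t$ the coarse gain $s$ genuinely uses that distinct transverse tubes meet in $\lesssim 1$ of $P$'s $\delta$-balls.)

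Granting (A) and (B), one concludes by choosing $\epsilon=\epsilon(s,t,\zeta)$ and $\delta_{0}$ small enough that the accumulated constant $\delta^{-O(\epsilon)}$ is $\le\delta^{-\zeta}$, and observing that $|P'|_{\delta}\gtrsim\delta^{-\gamma+O(\epsilon)}>0$ forces $P'$ to be non-empty; then $P'$ is the desired $(\delta,\gamma,\delta^{-\zeta})$-set. The main obstacle is precisely the induction underlying (A)–(B): one has to verify that the rescaled sub-family of tubes inside a cell is still a $(\delta/r,t)$-set with a harmless constant, and — most delicately — arrange the bookkeeping so that the $\delta^{-O(\epsilon)}$ losses incurred at each of the $\sim\log(1/\delta)$ scales do not accumulate, which is what ultimately pins down the choice of $\epsilon$ in terms of $s,t,\zeta$.
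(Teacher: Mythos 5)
Your reduction stands or falls with the induction on scales that is supposed to prove (A) and (B), and that step does not work as described. In the ``bad'' branch you zoom into an overly concentrated ball $B(x,r)$, $r\le\delta^{c_0}$, rescale, and apply the inductive hypothesis there; but no subset of $B(x,r)$ can be the desired output once $r<\delta^{\zeta/\gamma}$ (and your dichotomy allows $r$ all the way down to $\delta$): for $P'\subset B(x,r)$ the defining inequality at scale $r$ reads $|P'|_\delta\le\delta^{-\zeta}r^{\gamma}|P'|_\delta$, which forces $r^{\gamma}\ge\delta^{\zeta}$. So whatever the induction produces inside the cell, it is too concentrated (and typically also too small, of size about $(\delta/r)^{-\gamma}\ll\delta^{\zeta-\gamma}$) to be a $(\delta,\gamma,\delta^{-\zeta})$-set; the correct multi-scale arguments multiply a coarse-scale count by a count inside typical cells, rather than discarding everything outside one cell, which is exactly the information your recursion throws away. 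Moreover the hypotheses are not preserved under your rescaling: $P_T\cap B(x,r)$ need not rescale to a $(\delta/r,s)$-set, since the $(\delta,s)$-condition of $P_T$ bounds $|P_T\cap B(y,\rho)|_\delta$ relative to $|P_T|_\delta$, not relative to $|P_T\cap B(x,r)|_\delta$ (the restriction may consist of a single $\delta$-ball); and the tubes meeting $B(x,r)$ need not form a $(\delta/r,t)$-set after rescaling (their number can be far below $(\delta/r)^{-t}$ and their distribution arbitrarily non-uniform), so substantial pigeonholing/uniformisation would be needed even to set up the inductive call. A secondary but telling slip: the claim that the crude bound $\delta^{-2\epsilon}r^{s+t-1}Mp$ together with $\mu\lesssim\delta^{-(t-s)^{+}}$ gives the $\gamma$-non-concentration for $r\ge\delta^{c_0}$ is false when $s<t$, because $s+t-1$ exceeds $\gamma-1=2s-1$ by $t-s$ and a genuine multiplicity $\mu\sim\delta^{-(t-s)}$ (e.g.\ in the train-track configuration with $t=2s$) leaves a loss $\approx\delta^{-(t-s)}$ already at $r\sim1$; coarse scales are in fact handled by the trivial bound once $c_0\gamma\le\zeta$, but this shows the exponent bookkeeping behind ``coarse gain $\min\{s,t\}$, fine gain $s$'' is never actually established in your sketch.

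What is missing is therefore precisely the substantive content of the lemma: a $\delta$-discretised Furstenberg estimate giving, for (a refinement of) $P$, cardinality $\gtrsim\delta^{-\gamma+o(1)}$ together with non-concentration at \emph{all} scales. The paper does not reprove this from scratch: it extracts from the proof of \cite[Theorem A.1]{HSY21} the Hausdorff-content bound $\mathcal{H}^{\gamma(s,t)}_{\infty}(P(\delta))\ge\delta^{\zeta}$ (that argument covers both the Wolff-type case $t\ge s$ and the Lutz--Stull case $t\le s$), and then converts the content bound into a $(\delta,\gamma,\delta^{-\zeta})$-set using \cite[Proposition A.1]{FasslerOrponen14}. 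If you want a self-contained proof you must supply a correct two-scale (or multi-scale) combination, including the uniformisation needed to make restricted fibres and surviving tube families regular; as written, your proposal defers exactly this to an inductive step that cannot succeed in its current form.
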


\begin{remark} In the range $t > s$, the conclusion of Lemma \ref{lemma4} could be slightly improved: we could take $\gamma(s,t) = 2s + \epsilon(s,t)$, as shown in \cite{2021arXiv210603338O}. For $t > 1$ or $t > 2s$, further improvements are possible, see the introduction to \cite{2021arXiv210704471D} for an account of the best current results. However, Theorem \ref{main2} (hence Theorem \ref{main}) already follows from Lemma \ref{lemma4} as stated, and the sharper bounds for $\gamma(s,t)$ would not result in any further improvements. The reason is that the main application of Lemma \ref{lemma4} in the proof of Theorem \ref{main} occurs in the case $t \leq s$, and in this regime the bound $\gamma(s,t) = s + t$ is sharp.
\end{remark}

\begin{proof}[Proof of Lemma \ref{lemma4}] We only sketch the argument, since it is nearly follows from existing statements, and the full details are very standard (if somewhat lengthy). The main point is the following: it is known that the Hausdorff dimension of every $(s,t)$-Furstenberg set $F \subset \R^{2}$ satisfies $\Hd F \geq \gamma(s,t)$, where $\gamma(s,t)$ is the function defined in \eqref{form31}. The case $t \leq s$ is due to Lutz and Stull \cite{MR4179019}; they used information theoretic methods, but a more classical proof is also available, see \cite[Theorem A.1]{HSY21}. The case $t \geq s$ essentially goes back to Wolff in \cite{Wolff99}, but also literally follows from \cite[Theorem A.1]{HSY21}.

While the statement in \cite[Theorem A.1]{HSY21} only concerns Hausdorff dimension, the proof goes via Hausdorff content,
 and the following statement can be extracted from the argument. Let $P$ be the set defined in \eqref{form32a}. Then, the $\gamma(s,t)$-dimensional Hausdorff content of the $\delta$-neighbourhood $P(\delta)$ satisfies
\begin{equation}\label{form33} \mathcal{H}_{\infty}^{\gamma(s,t)}(P(\delta)) \geq \delta^{\zeta}, \end{equation}
assuming that $\epsilon = \epsilon(s,t,\zeta) > 0$ and the upper bound $\delta_{0} = \delta(s,t,\zeta) > 0$ for the scale $\delta$ were chosen small enough. The claim in the lemma immediately follows from \eqref{form33}, and \cite[Proposition A.1]{FasslerOrponen14}. This proposition, in general, states that if $B \subset \R^{d}$ is a set with $\mathcal{H}^{s}_{\infty}(B) = \kappa > 0$, then $B$ contains a non-empty $(\delta,s,C\kappa^{-1})$-set for some absolute constant $C > 0$. In particular, from \eqref{form33} we see that $P(\delta)$ contains a $(\delta,\gamma(s,t),C\delta^{-\zeta})$-set. This easily implies a similar conclusion about $P$ itself. \end{proof}

By standard point-line duality considerations (see a few details below the statement), Lemma \ref{lemma4} is equivalent to the following statement concerning tubes:
\begin{lemma}\label{lemma5} Let $s \in [0,1]$ and $t \in [0,2]$. Then, for every $\zeta > 0$, there exists $\delta_{0} = \delta_{0}(s,t,\zeta) > 0$ and $\epsilon = \epsilon(s,t,\zeta) > 0$ such that the following holds for all $\delta \in (0,\delta_{0}]$. Assume that $P \subset B(1) \subset \R^{2}$ is a non-empty $(\delta,t,\delta^{-\epsilon})$-set. Assume that for every $x \in P$ there exists a non-empty $(\delta,s,\delta^{-\epsilon})$-set of tubes $\mathcal{T}_{x}$ with the property that $x \in T$ for all $T \in \mathcal{T}_{x}$. Then, the union
\begin{equation}\label{form32} \mathcal{T} := \bigcup_{x \in P} \mathcal{T}_{x} \end{equation}
contains a non-empty $(\delta,\gamma(s,t),\delta^{-\zeta})$-set, where $\gamma(s,t) = s + \min\{s,t\}$. \end{lemma}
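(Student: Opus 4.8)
The plan is to deduce Lemma~\ref{lemma5} from Lemma~\ref{lemma4} via the standard point-line duality in the plane, so that no new geometric work is required. Recall that a non-vertical line $\ell = \{(x,y) : y = ax + b\}$ corresponds to the dual point $\ell^{*} = (a,b) \in \R^{2}$, and dually a point $p = (a,b)$ corresponds to the line $p^{*} = \{(x,y): y = -ax + b\}$ (or whatever sign convention one prefers); the key incidence-preserving feature is that $p \in \ell$ if and only if $\ell^{*} \in p^{*}$. At the $\delta$-discretised level, this duality sends a $\delta$-tube $T$ (the $\delta$-neighbourhood of a line with bounded slope) to a point $T^{*}$, and a point $p \in B(1)$ to a $\delta$-tube $p^{*}$, in such a way that $p \in T$ forces $T^{*} \in p^{*}(O(\delta))$ and conversely, up to adjusting the constants by a bounded factor. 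Crucially, this map is bilipschitz between the relevant metric spaces (on the compact region we care about), so it sends $(\delta,u,C)$-sets of tubes to $(\delta,u,C')$-sets of points with $C' \sim C$, and vice versa.

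First I would set up the duality carefully on a bounded region: since $P \subset B(1)$, every tube $T \in \mathcal{T}_{x}$ can, after a harmless rescaling/rotation, be assumed to have slope in a bounded interval, and the duals $T^{*}$ then live in a bounded subset of $\R^{2}$. One has to handle the (measure-zero, but discretely nonempty) issue of near-vertical tubes by a standard cover of $S^{1}$ by $O(1)$ arcs of directions, applying the argument to each arc and absorbing the loss into the constants. Next, given the data of Lemma~\ref{lemma5} — a $(\delta,t,\delta^{-\epsilon})$-set $P$ of points and, for each $x \in P$, a $(\delta,s,\delta^{-\epsilon})$-set $\mathcal{T}_{x}$ of tubes through $x$ — I would dualise: the tube $x^{*}$ is a single $\delta$-tube, the set $\mathcal{T}_{x}^{*} := \{T^{*} : T \in \mathcal{T}_{x}\}$ becomes a $(\delta,s,C)$-set of \emph{points} contained in $x^{*}(O(\delta)) \cap B(O(1))$, and the incidence $x \in T$ dualises to $T^{*} \in x^{*}$. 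Thus the family of tubes $\{x^{*} : x \in P\}$ is a $(\delta,t,C)$-set of $\delta$-tubes (because duality is bilipschitz and $P$ was a $(\delta,t,\delta^{-\epsilon})$-set), and for each such tube $x^{*}$ we have been handed a $(\delta,s,C)$-set of points $\mathcal{T}_{x}^{*}$ inside it. This is precisely the hypothesis of Lemma~\ref{lemma4} with the roles of $s$ and $t$ in the tube/point dichotomy matching up; note that Lemma~\ref{lemma4} requires $s \in [0,1]$ and $t \in [0,2]$, which is exactly the range assumed here, and $\gamma$ is symmetric-looking enough: the conclusion of Lemma~\ref{lemma4} gives a $(\delta,\gamma(s,t),\delta^{-\zeta})$-subset of $\bigcup_{x} \mathcal{T}_{x}^{*}$, and dualising back turns this into a $(\delta,\gamma(s,t),\delta^{-\zeta'})$-subset of $\mathcal{T} = \bigcup_{x} \mathcal{T}_{x}$, with $\zeta'$ controlled by $\zeta$ up to the bilipschitz constants.

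The only genuinely delicate point — and the step I expect to cost the most care — is bookkeeping the constants and the near-vertical directions: one must check that the bilipschitz constant of the duality on $B(O(1))$ is absolute, that the "$O(\delta)$-neighbourhood" slop in "$p \in T \iff T^{*} \in p^{*}(O(\delta))$" can be absorbed by passing from $\delta$-tubes to $C\delta$-tubes and back (exactly the kind of maneuver carried out at the end of the proof of Proposition~\ref{prop2}), and that the direction decomposition only multiplies cardinalities and constants by $O(1)$. Since all of this is entirely routine and appears in many places in the literature (e.g. the duality discussion in \cite{2021arXiv210603338O} or \cite{2021arXiv211105093F}), I would present it as a sketch: state the dual correspondence, note its bilipschitz property on bounded sets, observe that it interchanges the hypotheses and conclusions of Lemmas~\ref{lemma4} and~\ref{lemma5} verbatim up to constants, and conclude. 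No appeal to Furstenberg-set machinery is needed beyond what is already packaged inside Lemma~\ref{lemma4}.
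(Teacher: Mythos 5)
Your proposal is correct and follows essentially the same route as the paper: the authors also derive Lemma~\ref{lemma5} from Lemma~\ref{lemma4} via the point-line duality $\mathbf{D}(a,b) = \{y = ax+b\}$, $\mathbf{D}^{\ast}(\{y=cx+d\}) = (-c,d)$, noting that the incidence $p \in \ell$ transforms into $\mathbf{D}^{\ast}(\ell) \in \mathbf{D}(p)$, that the duality is bilipschitz on bounded sets so it preserves $(\delta,u,C)$-sets up to constants, and delegating the slopping issues (tubes versus lines, $O(\delta)$-neighbourhoods) to a cited reference. Your extra attention to the direction decomposition and the $O(\delta)$-loss matches the technical caveats the paper flags but does not spell out.
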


If the reader is not familiar with point-line duality, then the full details in a very similar context are recorded in \cite[Sections 6.1-6.2]{2021arXiv210704471D}. Here we just describe the key ideas. To every point $(a,b) \in \R^{2}$, we associate the line $\mathbf{D}(a,b) := \{y = ax + b : x \in \R\} \in \mathcal{A}(2,1)$. Conversely, to every line $\ell = \{y = cx + d : x \in \R\}$ we associate the point $\mathbf{D}^{\ast}(\ell) = (-c,d)$.
Then, it is easy to check that
\begin{equation}\label{form34} p \in \ell \quad \Longleftrightarrow \quad \mathbf{D}^{\ast}(\ell) \in \mathbf{D}(p). \end{equation}
For $(a,b),(c,d) \in [0,1]^{2}$, say, the maps $\mathbf{D}$ and $\mathbf{D}^{\ast}$ are bilipschitz between the Euclidean metric, and the metric on $\mathcal{A}(2,1)$. Therefore the property of "being a $(\delta,s)$-set" is preserved (up to inflating the constants slightly). Now, roughly speaking, Lemma \ref{lemma5} follows from Lemma \ref{lemma4} by first applying the transformations $\mathbf{D},\mathbf{D}^{\ast}$ to the points $P$ and the tubes $\mathcal{T}_{x}$, $x \in P$, respectively. The main technicalities arise from the fact that $\mathcal{T}_{x}$ is a set of $\delta$-tubes, and not a set of lines. Let us ignore this issue for now, and assume that $\mathcal{T}_{x} = \mathcal{L}_{x}$ is actually a $(\delta,s)$-set of lines such that $x \in \ell$ for all $\ell \in \mathcal{L}_{x}$. In this case Lemma \ref{lemma5} is simple to infer from Lemma \ref{lemma4}.

Write $P = \mathbf{D}^{\ast}(\mathcal{L})$ for some $(\delta,t)$-set of lines $\mathcal{L} \subset \mathcal{A}(2,1)$, and write also $\mathcal{L}_{x} = \mathbf{D}(P_{x})$ for some $(\delta,s)$-set of points $P_{x} \subset \R^{2}$. Now, if $\ell \in \mathcal{L}$, then $\mathbf{D}^{\ast}(\ell) = x \in \mathbf{D}(y)$ for all $y \in P_{x}$ by assumption. By \eqref{form34}, this is equivalent to $P_{x} \subset \ell$. Thus, every line $\ell = (\mathbf{D}^{\ast})^{-1}(x) \in \mathcal{L}$, $x \in P$, contains a $(\delta,s)$-set $P_{x} =: P_{\ell}$. This places us in a position to apply Lemma \ref{lemma4}.

A similar argument still works if $\mathcal{L}_{x}$ is replaced by the $(\delta,s)$-set of tubes $\mathcal{T}_{x}$. One only needs to make sure that if $x \in T \in \mathcal{T}_{x}$, then the line $\ell = (\mathbf{D}^{\ast})^{-1}(x)$ is $O(\delta)$-close to a certain $(\delta,s)$-set $P_{\ell}$; this set can be derived from $\mathcal{T}_{x}$ by using the idea above. For the technical details, we refer to \cite[Sections 6.1-6.2]{2021arXiv210704471D}, in particular \cite[Lemma 6.7]{2021arXiv210704471D}.

We are finally equipped to prove Theorem \ref{main2}:

\begin{proof}[Proof of Theorem \ref{main2}]
Fix $s \in [0,2]$, $t \in (1,2]$, $\sigma \in [0,1)$, and $\eta > 0$. Let $P_{K},P_{E} \subset B(1) \subset \R^{2}$ be as in the statement of the theorem: thus $P_{K}$ is a $(\delta,t,\delta^{-\epsilon})$-set, and $P_{E}$ is a $(\delta,s,\delta^{-\epsilon})$-set. Recall also the $(\delta,\sigma,\delta^{-\epsilon})$-sets of tubes $\mathcal{T}_{x}$ passing through $x$, for every $x \in P_{E}$, with the property
\begin{equation}\label{form28} |P_{K} \cap T| \geq \delta^{\sigma + \epsilon}|P_{K}|, \quad T \in \mathcal{T}_{x}. \end{equation}
The claim is that
\begin{equation}\label{form30} s \leq \max\{1 + \sigma - t + \eta,0\} \end{equation}
if $\delta,\epsilon$ are chosen small enough, depending on $t,\sigma,\eta$.

 By Lemma \ref{lemma5}, the union $\bigcup_{x \in P_{E}} \mathcal{T}_{x}$ contains a non-empty $(\delta,\gamma(\sigma,s),\delta^{-\zeta})$-set $\mathcal{T}$, where
 \begin{displaymath} \gamma(\sigma,s) = \sigma + \min\{s,\sigma\}, \end{displaymath}
 and $\zeta > 0$ can be made as small as we like by choosing $\epsilon,\delta > 0$ sufficiently small. In particular, we will require that $10\zeta + 2\epsilon \leq \eta$. We may assume that $\zeta \geq \epsilon$, so our $(\delta,t,\delta^{-\epsilon})$-set $P_{K}$ is also a $(\delta,t,\delta^{-\zeta})$-set (if $\zeta < \epsilon$, then both $P_{K}$ and $\mathcal{T}$ are $(\delta,u,C)$-sets with constant $C = \delta^{-\epsilon}$, and this would work even better in the sequel).

 By \eqref{form28}, we have
 \begin{equation}\label{form29} |P_{K}||\mathcal{T}| \cdot \delta^{\sigma + \epsilon} \leq \sum_{T \in \mathcal{T}} |P_{K} \cap T| = |\mathcal{I}(P_{K},\mathcal{T})|. \end{equation}
 We next compare this lower bound for $|\mathcal{I}(P_{K},\mathcal{T})|$ against the upper bounds from Theorem \ref{FuRen}. Recall the exponent "$\kappa$" from the statement of Theorem \ref{FuRen}. Since $P_{K}$ is a $(\delta,t)$-set and $\mathcal{T}$ is a $(\delta,\gamma(\sigma,s))$-set, the useful quantity for us is
  \begin{displaymath} \bar{\kappa}(s,\sigma,t) := \kappa(t,\gamma(\sigma,s)) = \min\{1/2,1/(t + \gamma(\sigma,s) - 1)\}. \end{displaymath}
The remainder of the proof splits into four cases:
 \begin{enumerate}
 \item Assume first that $s \leq \sigma$. Thus $\gamma(\sigma,s) = s + \sigma$, so $\mathcal{T}$ is a $(\delta,s + \sigma,\delta^{-\zeta})$-set.
 \begin{itemize}
  \item[(a)] Assume that $\bar{\kappa}(s,\sigma,t) = 1/2$. Then, by Theorem \ref{FuRen},
  \begin{displaymath} |\mathcal{I}(P_{K},\mathcal{T})| \leq |P_{K}||\mathcal{T}| \cdot \delta^{(t + (s + \sigma) - 1)/2 - 5\zeta}. \end{displaymath}
  Comparing this against \eqref{form29} yields $\delta^{2\sigma+2\epsilon} \leq \delta^{t + s + \sigma - 1 - 10\zeta}$, and therefore $s \leq 1 + \sigma - t + 10\zeta + 2\epsilon$. This yields \eqref{form30}, since we assumed that $10\zeta + 2\epsilon \leq \eta$.
  \item[(b)] Assume that $\bar{\kappa}(s,\sigma,t) = 1/(t + s + \sigma - 1)$. Then,
  \begin{displaymath} |\mathcal{I}(P_{K},\mathcal{T})| \leq |P_{K}||\mathcal{T}| \cdot \delta^{1 - 5\zeta}. \end{displaymath}
  Comparing this against \eqref{form29} yields $\delta^{\sigma} \leq \delta^{1 - 5\zeta - \epsilon}$. Since $\sigma < 1$, this scenario is not possible for $\delta,\epsilon$ (hence also $\zeta$) small enough, depending only on $\sigma$.
 \end{itemize}
 \item Assume second that $s > \sigma$. Thus $\gamma(\sigma,s) = 2\sigma$, so $\mathcal{T}$ is a $(\delta,2\sigma,\delta^{-\zeta})$-set.
 \begin{itemize}
 \item[(a)] Assume that $\bar{\kappa}(s,\sigma,t) = \tfrac{1}{2}$. Then,
 \begin{displaymath} |\mathcal{I}(P_{K},\mathcal{T})| \leq |P_{K}||\mathcal{T}| \cdot \delta^{(t + 2\sigma - 1)/2 - 5\zeta}. \end{displaymath}
 Comparing this against \eqref{form29} yields $1 \leq \delta^{(t - 1)/2 - 5\zeta - \epsilon}$. Since $t > 1$, this scenario is not possible for $\delta,\epsilon$ (hence also $\zeta$) small enough, depending on $t$.
 \item[(b)] Assume finally that $\bar{\kappa}(s,\sigma,t) = 1/(t + 2\sigma - 1)$. Then,
 \begin{displaymath} |\mathcal{I}(\mathcal{P}_{K},\mathcal{T})| \leq |P_{K}||\mathcal{T}| \cdot \delta^{1 - 5\zeta}. \end{displaymath}
 As in case (1)(b) above, this leads to the impossible situation $\delta^{\sigma} \leq \delta^{1 - 5\zeta - \epsilon}$.
 \end{itemize}
 \end{enumerate}
 We have now seen that the cases (1)(b) and (2)(a)-(b) are not possible for $\delta,\epsilon$ small enough, depending only on $\sigma < 1$ and $t > 1$. Case (1)(a), on the other hand, yields the desired inequality \eqref{form30} for $10\zeta + 2\epsilon \leq \eta$. This completes the proof of Theorem \ref{main2}. \end{proof}

 \section{Proof of Theorem \ref{main3}}\label{s:secondMain}

The purpose of this section is to prove Theorem \ref{main3}, restated below:
\begin{thm}\label{main4} Let $K \subset \R^{2}$ be a Borel set with $\Hd K \leq 1$. Then,
\begin{displaymath} \Hd \{x \in \R^{2} \, \setminus \, K : \Hd \pi_{x}(K) < \Hd K\} \leq 1. \end{displaymath}
\end{thm}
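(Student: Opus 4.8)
The plan is to deduce Theorem~\ref{main4} from Theorem~\ref{main} via a $\delta$-discretised ``swapping trick'' in the spirit of Liu~\cite{MR4269398}. The key observation is that the $\delta$-discretised mechanism used to prove Theorem~\ref{main} --- namely the combination of Proposition~\ref{prop2} with the incidence statement of Theorem~\ref{main2} --- is essentially symmetric in the roles of ``points'' and ``tubes through points.'' To exploit this, I first record a stand-alone $\delta$-discretised consequence of Theorem~\ref{main}, which I will call Proposition~\ref{prop4}. Informally: if $t \in (1,2]$ and $\sigma \in [0,1)$, then one cannot have a $\delta$-separated $(\delta,t)$-set $P_K$ and a $\delta$-separated $(\delta,s)$-set $P_E$ with $s > 1+\sigma-t+\eta$ such that every $x \in P_E$ sees a $(\delta,\sigma)$-set of tubes $\mathcal{T}_x$ through $x$, each satisfying $|T\cap P_K|\gtrsim \delta^{\sigma+\epsilon}|P_K|$. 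This is exactly Theorem~\ref{main2}, so no new work is needed there; the content is in setting up the swap.

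Next I would argue by contradiction for Theorem~\ref{main4}. Suppose $K \subset \R^2$ is Borel with $\kappa := \Hd K \le 1$, and the exceptional set $E = \{x \notin K : \Hd \pi_x(K) < \kappa\}$ has $\Hd E > 1$. After the usual reductions (restricting to a compact piece, applying Frostman's lemma to get measures $\mu$ on $K$ and $\nu$ on $E$ with exponents $t' < \kappa$ close to $\kappa$ on $\mu$ and $s' > 1$ close to $\Hd E$ on $\nu$, and running the uniformisation of Lemma~\ref{lemma2} and the pigeonholing of the proof of Proposition~\ref{prop2}), I extract, at a small scale $\delta$, a $\delta$-separated $(\delta,s')$-set $P_E \subset E$, a $\delta$-separated $(\delta,t')$-set $P_K \subset K$, and for each $x \in P_E$ a $(\delta,\Sigma)$-set of tubes $\mathcal{T}_x$ through $x$ with $|T \cap P_K| \ge \delta^{\Sigma+\epsilon}|P_K|$ for $T \in \mathcal{T}_x$, where $\Sigma \le \kappa + \epsilon < 1$. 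Now comes the swap: I reinterpret this incidence configuration with the roles of $P_E$ and $P_K$ exchanged. Via point--line duality (as in Lemma~\ref{lemma5} and \cite[Sections 6.1--6.2]{2021arXiv210704471D}), the statement ``$x \in P_E$ lies on $\gtrsim \delta^{-\Sigma}$ tubes each heavily hitting $P_K$'' can be recast, after a counting/pigeonholing step, as ``many points $y \in P_K$ lie on a $(\delta,\cdot)$-set of tubes heavily hitting $P_E$'' --- with the crucial feature that $P_E$ now plays the role of the dense ``$P_K$''-type set (which has dimension $s' > 1$), while $P_K$ (dimension $t' < 1$) plays the role of the thin exceptional ``$P_E$''-type set. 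This is where Theorem~\ref{main} is genuinely used, not just its statement: the $\delta$-discretised incidence bound of Theorem~\ref{main2} is symmetric enough to apply in the swapped configuration.

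Concretely, after the swap the hypotheses of Proposition~\ref{prop4}/Theorem~\ref{main2} should be met with ``$t$'' $= s' > 1$ (the dimension of the dense set $P_E$), with ``$\sigma$'' roughly $\Sigma$ (or a small perturbation thereof, still $< 1$), and with ``$s$'' $= t' $, the dimension of the thin set $P_K$. The conclusion of Theorem~\ref{main2} then forces $t' \le 1 + \Sigma - s' + \eta$. Since $\Sigma \le \kappa + \epsilon$, $s' > 1$, and $\eta, \epsilon$ can be taken arbitrarily small, this gives $t' \le \kappa - (s' - 1) + O(\epsilon) < \kappa$ for $s'$ bounded away from $1$ --- but we chose $t' $ arbitrarily close to $\kappa$ from below, so taking $t'$ close enough to $\kappa$ (and $\epsilon$ small) yields a contradiction. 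Hence $\Hd E \le 1$, proving Theorem~\ref{main4}.

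\textbf{Main obstacle.} The delicate point is making the ``swap'' rigorous at the $\delta$-discretised level: the hypothesis gives, for each $x \in P_E$, a \emph{lower} bound $|T \cap P_K| \ge \delta^{\Sigma+\epsilon}|P_K|$, and one must convert this into a configuration where a large subset of $P_K$ is the base set of a genuine $(\delta,s)$-family of tubes through \emph{its} points, with the correct (heavy-intersection) property with respect to $P_E$. This requires a careful double-counting of the incidences $\mathcal{I}(P_E, \bigcup_x \mathcal{T}_x)$ combined with pigeonholing to fix, for most $y \in P_K$, a single scale and a single ``multiplicity'' value, and then verifying the $(\delta,s)$-set property of the resulting tube families survives the duality transform --- exactly the technical content hidden behind Lemma~\ref{lemma5}. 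I expect this bookkeeping, rather than any new analytic input, to be the bulk of the work; the dimension arithmetic and the appeal to Theorem~\ref{main2} are then routine. (An additional minor subtlety: one must ensure the swapped ``$\sigma$'' parameter stays strictly below $1$ and the swapped ``$t$'' parameter strictly above $1$, so that Theorem~\ref{main2} applies; both hold because $\Sigma \le \kappa + \epsilon \le 1+\epsilon$ can be arranged $< 1$ by shrinking, and $s' > 1$ by the standing assumption $\Hd E > 1$.)
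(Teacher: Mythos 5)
There is a genuine gap, and it sits exactly at the step you dismiss as ``bookkeeping.'' Your plan is: extract, from the counter assumption, a configuration where each $x$ in the (high-dimensional) exceptional set $P_E$ carries a $(\delta,\Sigma)$-set of tubes heavily hitting the (low-dimensional) set $P_K$, and then ``swap'' via point--line duality so that Theorem \ref{main2} can be applied with $P_E$ as the projected set. But Theorem \ref{main2} in the swapped configuration needs two hypotheses that your extracted configuration simply does not contain: (i) for many $y \in P_K$, the tubes through $y$ must form a $(\delta,\sigma')$-set (a non-concentration statement in the dual variable), and (ii) each such tube must satisfy a \emph{lower} bound $|T \cap P_E| \geq \delta^{\sigma'+\epsilon}|P_E|$. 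The configuration you start from only guarantees that each tube contains at least the single point $x \in P_E$ that generated it; nothing forces any tube to contain a second point of $P_E$, let alone a $\delta^{\sigma'}$-proportion of it, and nothing prevents the dual tube families through points of $P_K$ from being completely concentrated. Point--line duality (Lemma \ref{lemma5}) transfers $(\delta,s)$-set structure between points and lines, but it cannot manufacture heavy incidences with $P_E$ out of incidences with $P_K$; this is precisely why the ``swapping'' principle of Liu is emphasised in the introduction to be only a heuristic, requiring the proof of Theorem \ref{main} and not just its statement.

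The actual argument replaces your single dualisation-plus-Theorem-\ref{main2} step by two genuinely different ingredients. First, one proves a \emph{robust} discretised form of Theorem \ref{main} in the legitimate direction (Proposition \ref{prop4}): some point of the low-dimensional set projects \emph{every} large subset of the high-dimensional set to a set of large $\sigma$-dimensional content; the quantifier over all large subsets is what allows the exhaustion in Corollary \ref{prop1}, which packages, for most points of the low-dimensional set, $(\delta,s)$-families of tubes with controlled multiplicities $|T \cap F| \sim 2^{j}$ covering most of $F$. Second, the swap itself (Proposition \ref{prop3}) is carried out by an incidence argument that never re-invokes Theorem \ref{main2}: the counter assumption bounds the number of tubes through $x \in F$ meeting $E_{x,\mathrm{bad}}$ by $\delta^{-s+\tau}$, which forces most of these tubes to have high multiplicity through $E$; an upper bound on the resulting incidences via Cauchy--Schwarz and the energy estimate $\sum_{y \neq y'} |y-y'|^{-s} \lesssim \delta^{-2s-O(\epsilon)}$ is then played against a lower bound on $|\overline{\mathcal{T}}|$ coming from standard Furstenberg-type incidence estimates, yielding the contradiction. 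So while your high-level outline (discretise, use the mechanism behind Theorem \ref{main}, then swap) matches the intended strategy, the mechanism you propose for the swap would not close, and the Cauchy--Schwarz/multiplicity/energy argument that actually closes it is new work, not a routine appeal to Theorem \ref{main2}.
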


 In this section, we use the notation
 \begin{displaymath} \mathcal{H}^{s}_{\delta,\infty}(K) := \inf \Big\{\sum_{j} \diam(E_{i})^{s} : K \subset \bigcup_{j} E_{j} \Big\}, \qquad K \subset \R^{d}, \end{displaymath}
 where the "$\inf$" runs over all countable covers of $K$ with sets $E_{j} \subset \R^{d}$ with $\diam(E_{j}) \geq \delta$. Theorem \ref{main4} will be deduced as a corollary of a $\delta$-discretised version of Theorem \ref{main}:

 \begin{proposition}\label{prop4} Let $0 < \sigma \leq s \leq 1$, $t \in (1,2]$, $\eta \in (0,1]$, and $s > \max\{1 + \sigma - t,0\}$. Then, there exist
 \begin{displaymath} \delta_{0} = \delta(s,\sigma,t,\eta) > 0 \quad \text{and} \quad \epsilon = \epsilon(s,\sigma,t,\eta) > 0 \end{displaymath}
 such that the following holds for all $\delta \in (0,\delta_{0}]$.

 Let $E,F \subset B(1) \subset \R^{2}$ be non-empty $\delta$-separated sets, where $E$ is a $(\delta,s,\delta^{-\epsilon})$-set, $F$ is a $(\delta,t,\delta^{-\epsilon})$-set, and $\dist(E,F) \geq \tfrac{1}{2}$. Then, there exists $y \in E$ such that
 \begin{displaymath} \mathcal{H}^{\sigma}_{\delta,\infty}(\pi_{y}(F')) \geq \delta^{\eta}, \qquad\text{for all } F' \subset F, \, |F'| \geq \delta^{\epsilon}|F|. \end{displaymath}
 \end{proposition}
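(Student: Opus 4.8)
The plan is to argue by contradiction: assume the conclusion of the proposition fails, so that for every $y\in E$ there is a subset $F_{y}\subseteq F$ with $|F_{y}|\geq\delta^{\epsilon}|F|$ and $\mathcal{H}^{\sigma}_{\delta,\infty}(\pi_{y}(F_{y}))<\delta^{\eta}$, and fix for each $y$ a cover $\pi_{y}(F_{y})\subseteq\bigcup_{j}I_{j}^{y}$ by arcs of lengths $\geq\delta$ with $\sum_{j}(\diam I_{j}^{y})^{\sigma}<\delta^{\eta}$. Throughout I would write $\mu$, $\nu$ for the normalised counting measures on the $\delta$-nets $F$, $E$, so that $\mu(B(x,r))\lesssim\delta^{-\epsilon}r^{t}$ and $\nu(B(x,r))\lesssim\delta^{-\epsilon}r^{s}$ for $r\geq\delta$. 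Since $\dist(E,F)\geq\tfrac12$ and $E\cup F\subseteq B(1)$, the preimage $\pi_{y}^{-1}(I)\cap F$ of an arc of length $w$ lies in a tube of width $\sim w$ through $y$, and any such tube $T$ (of width $w\geq\delta$) satisfies $\mu(T)\lesssim\delta^{-\epsilon}w^{t-1}$, by covering $T$ with $\sim w^{-1}$ balls of radius $w$.

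I would first dispose of the regime $1+\sigma-t\leq 0$, where no machinery is needed: since then $t-1\geq\sigma$ and $\diam I_{j}^{y}\leq 1$, every $y\in E$ gives
\begin{displaymath}
\delta^{\epsilon}\leq\mu(F_{y})\leq\sum_{j}\mu\bigl(\pi_{y}^{-1}(I_{j}^{y})\cap F\bigr)\lesssim\delta^{-\epsilon}\sum_{j}(\diam I_{j}^{y})^{t-1}\leq\delta^{-\epsilon}\sum_{j}(\diam I_{j}^{y})^{\sigma}<\delta^{\eta-\epsilon},
\end{displaymath}
impossible once $\epsilon<\eta/2$ and $\delta$ is small. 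So here there is no bad $y$ at all, and the hypothesis $s>\max\{1+\sigma-t,0\}=0$ is not even used.

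In the remaining regime $1+\sigma-t>0$ (so $t\in(1,1+\sigma)$, $\sigma>t-1$, and $s>1+\sigma-t>0$ is a genuine constraint) I would mirror the scheme of the proof of Proposition \ref{prop2}, ending with an application of Theorem \ref{main2}. Step one: for each $y\in E$, pigeonhole the cover into a single dyadic scale, producing $\rho_{y}\in[\delta,1]$ and $\leq\delta^{\eta}\rho_{y}^{-\sigma}$ arcs of length $\sim\rho_{y}$ whose $\pi_{y}$-preimages capture $\mu$-mass $\gtrsim(\log\tfrac1\delta)^{-1}\delta^{\epsilon}$ of $F$; running the displayed estimate above at this one scale (with $w\sim\rho_{y}$, now using $t-1<\sigma$) forces $\rho_{y}\leq\delta^{c}$ for a constant $c=c(\sigma,t,\eta)>0$, so the compression scale is polynomially small. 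Pigeonholing over $y$ gives a common scale $\rho\leq\delta^{c}$ and a subset $E'\subseteq E$, still a $(\delta,s,\delta^{-2\epsilon})$-set, carrying for each $y$ a family of $\leq\delta^{\eta}\rho^{-\sigma}$ width-$\sim\rho$ tubes through $y$ capturing $\mu$-mass $\gtrsim\delta^{2\epsilon}$ of $F$. Step two: exactly as in Proposition \ref{prop2}, uniformise — apply Lemma \ref{lemma2} to the push-forwards $\pi_{y}\mu$ restricted to the selected arcs, run the scale-selection algorithm and the argument behind Claim \ref{SClaim} — to obtain, at a scale $\Delta\in[\rho,\delta^{c'}]\subseteq[\delta,\delta^{c'}]$ (after further pigeonholing so that $\Delta$ and the uniform exponent $\Sigma\leq\sigma+O(\eta)$ are common over a large subset of $E'$) a genuine $(\Delta,\Sigma,\Delta^{-O(\epsilon)})$-set of $\Delta$-tubes through each surviving $y$, each tube carrying $\mu$-mass $\gtrsim\Delta^{\Sigma+O(\epsilon)}$ of a clipped set $K_{y}\subseteq F$. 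Step three: a final round of pigeonholing on dyadic values of $\nu(B(\cdot,\Delta))$, $\mu(B(\cdot,\Delta))$ and on the common mass levels $v\leq\Delta^{s}$, $w\leq\Delta^{t}$ — taking maximal $\Delta$-separated subsets of the corresponding level sets, rather than coarsening $E$ and $F$ directly — yields a $\Delta$-separated $(\Delta,s,\Delta^{-O(\epsilon)})$-set $P_{E}\subseteq E$, a $\Delta$-separated $(\Delta,t,\Delta^{-O(\epsilon)})$-set $P_{K}\subseteq F$, and for each $y\in P_{E}$ a $(\Delta,\Sigma,\Delta^{-O(\epsilon)})$-set of $\Delta$-tubes through $y$ with $|T\cap P_{K}|\geq\Delta^{\Sigma+O(\epsilon)}|P_{K}|$. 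This is precisely the configuration that Theorem \ref{main2}, applied with $(\Delta,\Sigma)$ in place of $(\delta,\sigma)$, forbids once $s>1+\Sigma-t+\eta'$; since $\Sigma\leq\sigma+O(\eta)$ while $s-(1+\sigma-t)>0$ is a fixed gap, choosing $\eta$ (hence the internal $\epsilon,\zeta,\eta'$, and finally $\epsilon$ and $\delta_{0}$) small enough produces $s\leq 1+\Sigma-t+\eta'<s$, the desired contradiction.

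The step I expect to be the main obstacle is the scale management in the second regime. The sets $E,F$ are given at the fine scale $\delta$, but the compression detected by the counter-assumption, together with the uniform/Furstenberg structure, lives at a coarser scale $\Delta\geq\delta$, and coarsening a $(\delta,u,C)$-set to scale $\Delta$ need not preserve the Frostman bound; moreover, the content hypothesis $\mathcal{H}^{\sigma}_{\delta,\infty}<\delta^{\eta}$ is weaker than a bound on the number of $\delta$-arcs, so controlling the uniformised exponent $\Sigma$ by $\sigma+O(\eta)$ (rather than merely by $1$) is where one must use that $F$ is a $(\delta,t)$-set and that this forces $\rho\leq\delta^{c(\eta)}$. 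The remedy for the coarsening issue, copied from Proposition \ref{prop2}, is to re-extract $P_{E},P_{K}$ through level sets of $\nu(B(\cdot,\Delta)),\mu(B(\cdot,\Delta))$; the constants $\delta^{-O(\epsilon)}(\log\tfrac1\delta)^{O(1)}$ this introduces are absorbed into $\Delta^{-O(\epsilon)}$ using $\Delta\leq\delta^{c(\eta)}$ and the choice of $\epsilon$ small relative to $\eta$. Carrying all of these $O(\epsilon)$'s and $O(\eta)$'s through so that the final application of Theorem \ref{main2} closes is routine but lengthy.
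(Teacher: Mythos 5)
Your proposal is correct and follows essentially the same route as the paper: argue by contradiction, pigeonhole the Hausdorff-content cover to a single scale $\rho\le\delta^{c}$ (using the Frostman property of $F$ to force $\rho$ polynomially small), then rerun the uniformization and level-set extraction machinery from Proposition \ref{prop2} to produce the objects (i')--(iii'), and close with Theorem \ref{main2}. The paper's own proof is an abbreviated sketch that defers almost everything to the proof of Proposition \ref{prop2}; you spell out some details the authors suppress, notably the elementary disposal of the regime $1+\sigma-t\le 0$ and the derivation of the scale upper bound from the estimate $\mu(\pi_{y}^{-1}(I)\cap F)\lesssim\delta^{-\epsilon}(\diam I)^{t-1}$, both of which are consistent with what the paper implicitly uses.
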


 %\begin{remark}\label{rem1} Note that even though Proposition \ref{prop4} only talks about a single point $y \in E$, in fact the conclusions hold for all $y \in E'$, where $E' \subset E$ is a subset of cardinality $|E'| \geq \tfrac{1}{2}|E|$. This is simply because any subset $E' \subset E$ with $|E'| > \tfrac{1}{2}|E|$ satisfies the same hypotheses as $E$ itself. \end{remark}

 \begin{proof}[Proof of Proposition \ref{prop4}] The proof of Proposition \ref{prop4} is very similar to the proof of Theorem \ref{main}, so we only sketch the argument. Assume to the contrary that for every $y \in E$, there exists a subset $F_{y}' \subset F$ with
 \begin{equation}\label{form52} |F_{y}'| \geq \delta^{\epsilon}|F| \quad \text{and} \quad \mathcal{H}^{\sigma}_{\delta,\infty}(\pi_{y}(F_{y}')) < \delta^{\eta}. \end{equation}
 This allows us to run the proof of Proposition \ref{prop2} almost verbatim, and find the objects whose existence was guaranteed by that proposition. To see a few details, consider the unit-normalised counting measures $\mu$ and $\nu$ on $F$ and $E$, respectively. These measures play exactly the same role as the measures "$\mu$" and "$\nu$" in the proof of Proposition \ref{prop2}. Now, \eqref{form52}, combined with the pigeonhole principle, allows us to find a scale $\delta_{1} \in [\delta,\delta^{\eta/\sigma}]$, and a collection of dyadic arcs $\mathcal{J}_{y} \subset \mathcal{D}_{\delta_{1}}(S^{1})$ with the properties
 \begin{equation}\label{form54} |\mathcal{J}_{y}| \leq \delta_{1}^{-\sigma} \quad \text{and} \quad (\pi_{y}\mu_{y})(\cup \mathcal{J}_{y}) \geq \delta_{1}^{2\epsilon}. \end{equation}
 This is an analogue of \eqref{form8}, with slightly different notation. By pigeonholing, and restricting to a large subset of $E$, we may assume that $\delta_{1}$ is independent of $y \in E$. Since the upper bound $\delta^{\eta/\sigma} \to 0$ as $\delta \to 0$, we may arrange that $\delta_{1}$ is arbitrarily small.

 Now, following the proof of Proposition \ref{prop2} below \eqref{form8}, one can reach the following conclusions. There exist a scale $\Delta \in [\delta_{1},\delta^{\eta/\sigma}]$, a constant $\zeta = o_{\epsilon}(1)$, and the following objects for some $\Sigma \leq \sigma + \zeta$:
 \begin{enumerate}[(i)]
 \item \label{i'} A non-empty $(\Delta,t,\Delta^{-\zeta})$-set $P_{F} \subset F$,
 \item \label{ii'} A non-empty $(\Delta,s,\Delta^{-\zeta})$-set $P_{E} \subset E$,
 \item \label{iii'} For every $y \in P_{E}$ a non-empty $(\Delta,\Sigma,\Delta^{-\zeta})$-set of $\Delta$-tubes $\mathcal{T}^{y}$ with the properties that $y \in T$ for all $T \in \mathcal{T}^{y}$, and $|T \cap P_{F}| \geq \Delta^{\Sigma + \zeta}|P_{F}|$ for all $T \in \mathcal{T}^{y}$.
 \end{enumerate}
The dependence between "$\epsilon$" and "$\zeta$" is the same as the dependence between the parameters "$\eta$" and "$\epsilon$" in the proof of Proposition \ref{prop2}. Thus, to reach the conclusions \eqref{i'}--\eqref{iii'} for given $\zeta > 0$, we need to choose $\epsilon \ll \zeta^{3/\zeta + 1}$ in \eqref{form54}.

 Once the objects satisfying properties \eqref{i'}--\eqref{iii'} are located, we are in a position to apply Theorem \ref{main2}. The conclusion is that if $\Delta$ is sufficiently small, depending only on $\Sigma,t,\zeta$, then
 \begin{displaymath} s \leq \max\{1 + \Sigma - t,0\} \leq \max\{1 + \sigma + \zeta - t,0\}. \end{displaymath}
 However, we assumed to begin with that $s > \max\{1 + \sigma - t,0\}$. This leads to a contradiction if $\zeta > 0$ is small enough (which can be arranged by picking $\epsilon$ small enough).  \end{proof}

 %To apply Proposition \ref{prop4}, we record the following "Frostman" type lemma about finding $(\delta,s)$-sets inside sets with large Hausdorff content:

 We will use Proposition \ref{prop4} only via the following corollary:

 \begin{cor}\label{prop1} Let $0 \leq \sigma \leq s \leq 1$, $t \in (1,2]$, $\eta \in (0,1]$, and $s > \max\{1 + \sigma - t,0\}$. Then, there exist
 \begin{displaymath} \delta_{0} = \delta(s,\sigma,t,\eta) > 0 \quad \text{and} \quad \epsilon = \epsilon(s,\sigma,t,\eta) > 0 \end{displaymath}
 such that the following holds for all $\delta \in (0,\delta_{0}]$.

 Let $E,F \subset B(1) \subset \R^{2}$ be non-empty $\delta$-separated sets, where $E$ is a $(\delta,s,\delta^{-\epsilon})$-set, $F$ is a $(\delta,t,\delta^{-\epsilon})$-set, and $\dist(E,F) \geq \tfrac{1}{2}$. Then, there exists a subset $E' \subset E$ with $|E'| \geq (1 - \delta^{\epsilon})|E|$, and for every point $y \in E'$, there exist  disjoint (possibly empty) families of $\delta$-tubes $\mathcal{T}_{1},\ldots,\mathcal{T}_{L}$, where $L = 3\log (1/\delta)$, with the following properties:
 \begin{enumerate}[(a)]
 \item \label{a'} For all $j \in \{1,\ldots,L\}$ and $T \in \mathcal{T}_{j}$, we have $y \in T$.
 \item \label{b'}  Every $\mathcal{T}_{j}$, $j \in \{1,\ldots,L\}$, is a $(\delta,\sigma,\delta^{-\eta})$-set.
 \item \label{c'}  For all $j \in \{1,\ldots,L\}$, we have $2^{j-1} \leq |T \cap F| \leq 2^j$, $T \in \mathcal{T}_{j}$.
 \item \label{d'}  For all $j \in \{1,\ldots,L\}$ we have that either $\mathcal{T}_{j}$ is empty, or $|F \cap (\cup \mathcal{T}_{j})| \geq \delta^{2\epsilon}|F|$.
 \item \label{e'} We have $|F_{\mathrm{bad}}| \leq \delta^{\epsilon}|F|$, where
 \begin{displaymath} F_{\mathrm{bad}} := F \, \setminus \, \bigcup_{j = 1}^{L} \bigcup_{T \in \mathcal{T}_{j}} T. \end{displaymath}
 \end{enumerate}
  \end{cor}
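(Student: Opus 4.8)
The plan is to derive Corollary~\ref{prop1} from Proposition~\ref{prop4} by a pigeonholing argument, and then to organise, at each point $y$, the $\delta$-tubes through $y$ by the number of points of $F$ they contain. \emph{Step 1: from one good point to almost every point.} First I would apply Proposition~\ref{prop4} with the same $s,\sigma,t$ but with its parameter ``$\eta$'' replaced by an auxiliary $\eta_{0}\in(0,\eta)$ to be fixed later; this yields constants $\epsilon_{0},\delta_{0}'>0$. Let $E_{\mathrm{bad}}$ be the set of $x\in E$ admitting some $F'\subseteq F$ with $|F'|\geq\delta^{\epsilon_{0}}|F|$ and $\mathcal{H}^{\sigma}_{\delta,\infty}(\pi_{x}(F'))<\delta^{\eta_{0}}$. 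If $|E_{\mathrm{bad}}|\geq\delta^{\epsilon}|E|$, then, provided $2\epsilon\leq\epsilon_{0}$, $E_{\mathrm{bad}}$ is still a $(\delta,s,\delta^{-\epsilon_{0}})$-set, and Proposition~\ref{prop4} applied to the pair $(E_{\mathrm{bad}},F)$ produces a point of $E_{\mathrm{bad}}$ at which no such $F'$ exists --- a contradiction. Hence $|E_{\mathrm{bad}}|<\delta^{\epsilon}|E|$, and I set $E':=E\setminus E_{\mathrm{bad}}$, which gives the set $E'$ of the statement.

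\emph{Step 2: the tube families of a fixed $y\in E'$, and properties (a), (c), (d), (e).} Fix $y\in E'$. Since $\dist(y,F)\geq\tfrac12$ and $F\subseteq B(1)$, the $\delta$-tubes through $y$ whose directions run over the dyadic $\delta$-arcs of $S^{1}$ cover $F$ with overlap $O(1)$ on $F$; let $\mathcal{T}^{y}$ be those meeting $F$, so $|T\cap F|\geq 1$ for all $T\in\mathcal{T}^{y}$. I would pigeonhole $\mathcal{T}^{y}$ into the weight classes $\mathcal{T}_{j}^{\mathrm{full}}:=\{T\in\mathcal{T}^{y}:2^{j-1}\leq|T\cap F|\leq 2^{j}\}$, $j=1,\dots,L$, where $L=3\log(1/\delta)$ is large enough that $2^{L}\geq|F|$ (recall $|F|\lesssim\delta^{-2}$); this already gives (c). Call a class \emph{light} if it covers fewer than $\delta^{2\epsilon}|F|$ points of $F$, and put $\mathcal{T}_{j}:=\emptyset$ for those $j$; the points of $F$ covered only by light classes then number at most $L\delta^{2\epsilon}|F|\leq\delta^{\epsilon}|F|$ for $\delta$ small. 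For a heavy class I will choose $\mathcal{T}_{j}\subseteq\mathcal{T}_{j}^{\mathrm{full}}$ retaining a $(1-\delta^{\epsilon})$-fraction of the points of $F$ covered by $\mathcal{T}_{j}^{\mathrm{full}}$; combined with the previous bound and the bounded overlap on $F$ this yields (e), up to a harmless adjustment of the constant $\epsilon$, and it also yields (d) because heavy classes start with $\geq\delta^{2\epsilon}|F|$ points. Properties (a) and (c) for $\mathcal{T}_{j}$ are inherited from $\mathcal{T}_{j}^{\mathrm{full}}$.

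\emph{Step 3: property (b), the main point, and the main obstacle.} It remains, inside each heavy class $j$, to extract from $\mathcal{T}_{j}^{\mathrm{full}}$ a subfamily $\mathcal{T}_{j}$ that is a $(\delta,\sigma,\delta^{-\eta})$-set while still covering most of the points of $F$ in $\cup\mathcal{T}_{j}^{\mathrm{full}}$. Write $A_{j}\subseteq S^{1}$ for the union of the $\delta$-arcs recording the directions of $\mathcal{T}_{j}^{\mathrm{full}}$; because the tubes in a single weight class meet $F$ in comparable numbers of points, a subfamily covering a proportion $\theta$ of the arcs of $A_{j}$ covers a proportion $\sim\theta$ of the relevant points of $F$, so it suffices to discard an $O(\delta^{\epsilon})$-proportion of the arcs of $A_{j}$ and be left with a $(\delta,\sigma,\delta^{-\eta})$-set of arcs. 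The mechanism is a ``remove the heavy balls'' refinement: call $B(e,r)\subseteq S^{1}$ with $r\geq\delta$ \emph{heavy} if it carries more than $\delta^{-\eta}r^{\sigma}|A_{j}|_{\delta}$ arcs of $A_{j}$, and delete the tubes directed into heavy balls. The crucial input from Proposition~\ref{prop4} is that no heavy ball can carry tubes covering $\geq\delta^{\epsilon_{0}}|F|$ points of $F$: such a point set $F'$ would have $\pi_{y}(F')$ inside an arc of length $\lesssim r$, so $r^{\sigma}\gtrsim\mathcal{H}^{\sigma}_{\delta,\infty}(\pi_{y}(F'))\geq\delta^{\eta_{0}}$, whereas heaviness forces $r^{\sigma}<\delta^{\eta}$, and the two conflict once $\eta>\eta_{0}$ and $\delta$ is small. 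Using this together with the elementary lower bound $|A_{j}|_{\delta}\gtrsim\delta^{\eta_{0}-\sigma}$ (from $\mathcal{H}^{\sigma}_{\delta,\infty}(\pi_{y}(F\cap\cup\mathcal{T}_{j}^{\mathrm{full}}))\geq\delta^{\eta_{0}}$ and the trivial upper bound $\mathcal{H}^{\sigma}_{\delta,\infty}\lesssim|A_{j}|_{\delta}\delta^{\sigma}$), one bounds, scale by scale, the number of heavy balls and the arcs they contain, and one has to check that they total at most a $\delta^{\epsilon}$-fraction of $A_{j}$; the surviving arcs, after a standard greedy clean-up from the coarsest admissible scale downwards (in the spirit of Lemma~\ref{lemma3} and the regularization lemmas cited around Lemma~\ref{lemma2}), form the desired $(\delta,\sigma,\delta^{-\eta})$-set. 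One then fixes the constants in the order $\eta_{0}<\eta$, then $\epsilon$ small relative to $\epsilon_{0}(s,\sigma,t,\eta_{0})$ and to $\eta$, and finally $\delta_{0}$ small. The hard part is exactly this last accounting: converting the purely content-level information furnished by Proposition~\ref{prop4}, valid for \emph{all} large subsets $F'\subseteq F$, into the single-scale $(\delta,\sigma,\delta^{-\eta})$-regularity of (b) while keeping the proportion of discarded tubes --- hence of discarded points of $F$ --- below $\delta^{\epsilon}$, since that proportion is what simultaneously ties together (b), (d) and (e).
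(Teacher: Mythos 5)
Your Steps 1 and 2 coincide with the paper's argument: the reduction to finding a single good point (if the claim failed, the bad subset of $E$ would itself be a $(\delta,s,\delta^{-2\epsilon})$-set to which Proposition \ref{prop4} applies), and the dyadic decomposition of the tubes through $y$ according to $2^{j-1}\leq|T\cap F|\leq 2^{j}$, with light classes dumped into $F_{\mathrm{bad}}$, are exactly what is done there. The divergence is in Step 3, and there the proposal has a genuine gap which you yourself flag as ``the hard part'': the per-ball estimate you state does not suffice. Knowing that each \emph{individual} heavy ball carries tubes covering fewer than $\delta^{\epsilon_{0}}|F|$ points of $F$ says nothing about their union: there may be far more than $\delta^{-\epsilon_{0}}$ heavy balls, each just below threshold, whose tubes together exhaust all of $F_{j}$; in that case (d) and (e) fail, and so does (b), because after pruning the non-concentration must be measured against the surviving cardinality $|A_{j}'|_{\delta}$ rather than $|A_{j}|_{\delta}$, which requires knowing that only a small proportion of arcs was removed --- the very quantity left uncontrolled. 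A ``scale by scale'' count of heavy balls does not obviously produce this, since heaviness gives no a priori bound on how many heavy balls exist at a given scale.

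The step can in fact be closed within your framework, but it needs an additional idea you do not supply: apply Proposition \ref{prop4} to the \emph{union} $F_{\mathrm{disc}}$ of all points lying in discarded tubes, not ball by ball. If $|F_{\mathrm{disc}}|\geq\delta^{\epsilon_{0}}|F|$, then $\mathcal{H}^{\sigma}_{\delta,\infty}(\pi_{y}(F_{\mathrm{disc}}))\geq\delta^{\eta_{0}}$; on the other hand a Vitali ($5r$)-selection of pairwise disjoint heavy balls $B(e_{i},r_{i})$ covers $\pi_{y}(F_{\mathrm{disc}})$ after bounded enlargement, and disjointness together with the heaviness condition gives $\sum_{i}\delta^{-\eta}r_{i}^{\sigma}|A_{j}|_{\delta}\lesssim|A_{j}|_{\delta}$, hence $\sum_{i}r_{i}^{\sigma}\lesssim\delta^{\eta}<\delta^{\eta_{0}}$ for small $\delta$, a contradiction; one round of pruning then suffices, since every non-heavy ball already satisfies the required bound up to the factor $|A_{j}|_{\delta}/|A_{j}'|_{\delta}\leq 2$. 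For comparison, the paper avoids this accounting altogether: for each heavy class it repeatedly applies the content bound of Proposition \ref{prop4} to the not-yet-covered part of $F_{j}$ (legitimate as long as at least $\delta^{\epsilon}|F|$ points remain), extracts via \cite[Proposition A.1]{FasslerOrponen14} a Katz--Tao $(\delta,\sigma,C)$-family of tubes through $y$ of cardinality $\gtrsim\delta^{\eta-\sigma}$ covering new points, and stops when fewer than $\delta^{\epsilon}|F|$ points are uncovered; the union of the $k$ extracted families is automatically a $(\delta,\sigma,O(\delta^{-\eta}))$-set, because its cardinality is $\gtrsim k\,\delta^{\eta-\sigma}$ while any $r$-ball of directions meets at most $k\cdot C(r/\delta)^{\sigma}$ of its members. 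As written, your proposal identifies the right structure but stops short of the decisive estimate.
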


 % and a number $N = N_{y} \geq 1$ with the following property. There exists a $(\delta,\sigma,\delta^{-\eta})$-set of tubes $\mathcal{T}^{y}$ satisfying
 %\begin{enumerate}
 %\item $N/2 \leq |F \cap T| \leq N$ for all $T \in \mathcal{T}^{y}$, and
 %\item $|F \cap (\cup \mathcal{T}^{y})| \geq \delta^{\eta}|F|$.
 %\end{enumerate}

\begin{proof} Instead of finding the subset $E' \subset E$ with $|E'| \geq (1 - \delta^{\epsilon})|E|$, it is sufficient to find a single point $y \in E$ with the stated properties. Indeed, if the claim fails, then the set of "bad" points $y\in E$ has cardinality $\geq \delta^{\epsilon}|E|$ and so is a $(\delta,s,\delta^{-2\epsilon})$-set, and locating a single good point there yields a contradiction.

After this reduction, the point $y \in E$ we are looking for is simply the one given by Proposition \ref{prop4}. Let $\mathcal{T} = \mathcal{T}_{y}$ be an initial collection of $|\mathcal{T}| \sim \delta^{-1}$ tubes of width $\delta$, all of which contain $y$, whose union covers $B(1)$, and whose directions are $\sim \delta$-separated. For $j \in \{1,\ldots,3\log(1/\delta)\}$, we then define
\begin{displaymath} \mathcal{T}_{j}' := \{T \in \mathcal{T} : 2^{j - 1} \leq |F \cap T| < 2^{j}\}. \end{displaymath}
Since $|F| \leq \delta^{-3}$, every point in $F$ is covered by at least one tube in one of the families $\mathcal{T}_{j}'$.

We then plan to refine the families $\mathcal{T}_{j}'$ into $(\delta,\sigma,\delta^{-2\eta})$-sets without discarding too much of the set $F$. Fix $j \in \{1,\ldots,3\log(1/\delta)\}$, and write $F_{j} := F \cap (\cup \mathcal{T}_{j}')$. If $|F_{j}| < 2\delta^{\epsilon}|F|$, we declare that $F_{j} \subset F_{\mathrm{bad}}$, and $\mathcal{T}_{j}=\emptyset$. Otherwise, we construct a $(\delta,s,O(\delta^{-\eta}))$-subset $\mathcal{T}_{j} \subset \mathcal{T}_{j}'$ with the following procedure. Since $|F_{j}| \geq 2\delta^{\epsilon}|F| \geq \delta^{\epsilon}|F|$, we know from Proposition \ref{prop4} (or rather the choice of $y \in E$) that
\begin{equation}\label{form48} \mathcal{H}^{\sigma}_{\delta,\infty}(\pi_{y}(F_{j})) \geq \delta^{\eta}. \end{equation}
Using \cite[Proposition A.1]{FasslerOrponen14}, this allows us to find a Katz-Tao $(\delta,s,C)$-subset $P_{1} \subset \pi_{y}(F_{j})$ of cardinality $|P_{1}| \gtrsim \delta^{\eta - s}$, where $C > 1$ is an absolute constant. Then, we define
\begin{displaymath}
 \mathcal{T}^{1}_{j} := \{T \in \mathcal{T}_{j}' : T \cap \pi_{y}^{-1}(P_{1}) \cap F_{j} \neq \emptyset\}.
 \end{displaymath}
Using $\dist(y,F) \geq \tfrac{1}{2}$, it is easy to check that $\mathcal{T}^{1}_{j}$ is a Katz-Tao $(\delta,s,C')$-set for some $C' \sim C$, and that $|\mathcal{T}_{j}^{1}| \gtrsim \delta^{\eta - s}$.

We then assume that certain disjoint subsets $\mathcal{T}_{j}^{1},\ldots,\mathcal{T}_{j}^{k} \subset \mathcal{T}_{j}'$ have already been constructed, $k \geq 1$. By "disjoint" we mean that the families consist of distinct elements of $\mathcal{T}$; the tubes themselves cannot be disjoint, as they all contain the point $y \in E$. If the cardinality of
\begin{displaymath} F_{j}^{k + 1} := F_{j} \, \setminus \, \cup (\mathcal{T}_{j}^{1} \cup \ldots \cup \mathcal{T}^{k}_{j}) \end{displaymath}
is $|F_{j}^{k + 1}| < \delta^{\epsilon}|F|$, the construction terminates, and the set $F_{j}^{k}$ is added to $F_{\mathrm{bad}}$. Otherwise $|F_{j}^{k + 1}| \geq \delta^{\epsilon}|F|$, and therefore \eqref{form48} holds with $F_{j}^{k + 1}$ in place of $F_{j}$. Repeating the previous steps, this allows us to find another Katz-Tao $(\delta,s,C')$-subset $\mathcal{T}_{j}^{k + 1} \subset \mathcal{T}_{j}'$ with $|\mathcal{T}^{k + 1}_{j}| \gtrsim \delta^{\eta - s}$. We make sure to select only such tubes to $\mathcal{T}_{j}^{k + 1}$ which contain at least one "new" point of $F_{j}$ which is not contained in $\cup (\mathcal{T}^{1}_{j} \cup \ldots \cup \mathcal{T}^{k}_{j})$. Then $\mathcal{T}_{j}^{k + 1} \subset \mathcal{T} \, \setminus \, (\mathcal{T}^{1}_{j} \cup \ldots \cup \mathcal{T}^{k}_{j})$.

The construction terminates in finitely many steps, because $F$ is a finite set: there exists $k = k(j) \geq 1$ such that $|F_{j}^{k + 1}| < \delta^{\epsilon}|F|$. Since initially $|F_{j}| \geq 2\delta^{\epsilon}|F|$, this means that the union $\mathcal{T}_{j}^{1} \cup \ldots \cup \mathcal{T}_{j}^{k}$ covers at least $\delta^{\epsilon}|F|$ points of $F$; this is needed for property \eqref{d'}. At this stage $F_{j}^{k + 1}$ is moved into $F_{\mathrm{bad}}$, and we define $\mathcal{T}_{j} := \mathcal{T}_{j}^{1} \cup \ldots \cup \mathcal{T}_{j}^{k}$. We now check that $\mathcal{T}_{j}$ is a $(\delta,\sigma,O(\delta^{-\eta}))$-set. Indeed, first observe that
\begin{equation}\label{form49} |\mathcal{T}_{j}| = \sum_{i = 1}^{k} |\mathcal{T}_{j}^{i}| \gtrsim k \cdot \delta^{\eta - \sigma}. \end{equation}
On the other hand, each family $\mathcal{T}_{j}^{i}$ was individually a Katz-Tao $(\delta,\sigma,C')$-set. Since all the tubes contain the common point $y$, this is equivalent to saying that the set of directions $S_{j}^{i} \subset S^{1}$ of the tubes in $\mathcal{T}_{j}^{i}$ forms a Katz-Tao $(\delta,\sigma,C'')$-set for some $C'' \sim C'$. %\textcolor{purple}{Do we really need to go via directions here? Why doesn't the same argument work directly for $\mathcal{T}_j$?} Moreover, if $S_{j} := S_{j}^{1} \cup \ldots \cup S_{j}^{k}$, we have
\begin{displaymath} |S_{j} \cap B(e,r)| \leq \sum_{i = 1}^{k} |S_{j}^{i} \cap B(e,r)| \leq k \cdot C''\left(\frac{r}{\delta} \right)^{\sigma} \stackrel{\eqref{form49}}{\lesssim} \delta^{-\eta} \cdot r^{\sigma}|\mathcal{T}_{j}| \sim \delta^{-\eta} \cdot r^{\sigma}|S_{j}|. \end{displaymath}
Therefore the direction set of $\mathcal{T}_{j}$ is a $(\delta,\sigma,O(\delta^{-\eta}))$-set, and finally the same is true of $\mathcal{T}_{j}$.

To conclude the proof, we recall that $j \in \{1,\ldots,L\}$ with $L = 3\log (1/\delta)$. Therefore the set
\begin{displaymath} F_{\mathrm{bad}} := F_{1}^{k(1) + 1} \cup \ldots \cup F_{L}^{k(L) + 1} \end{displaymath}
satisfies $|F_{\mathrm{bad}}| \lesssim (\log (1/\delta)) \cdot \delta^{\epsilon}|F|$, and in particular $|F_{\mathrm{bad}}| \leq \delta^{\epsilon/2}|F|$ if $\delta = \delta(\epsilon) > 0$ is small enough. We have now established all the conclusions \eqref{a'}--\eqref{e'} with "$\epsilon/2$" in place of "$\epsilon$", and the proof is complete.
\end{proof}

 By standard discretisation arguments (see for example the proof of \cite[Theorem 2]{MR4148151} for a very similar argument), Theorem \ref{main3} follows from its discrete counterpart in the proposition below:
 \begin{proposition}\label{prop3} For every $s \in (0,1]$, $t > 1$, and $\tau \in (0,1]$, there exists $\delta_{0} = \delta_{0}(s,t,\tau) > 0$ and $\epsilon = \epsilon(s,t,\tau) > 0$ such that the following holds for all $\delta \in (0,\delta_{0}]$. Let $E,F \subset B(1) \subset \R^{2}$ be non-empty $\delta$-separated sets, where $E$ is a $(\delta,s,\delta^{-\epsilon})$-set, and $F$ is a $(\delta,t,\delta^{-\epsilon})$-set, and $\dist(E,F) \geq \tfrac{1}{2}$. Then, there exists $x \in F$ such that
 \begin{equation}\label{form37} |\pi_{x}(E')|_{\delta} \geq \delta^{-s + \tau}, \qquad\text{for all } E' \subset E, \, |E'| \geq \delta^{\epsilon}|E|. \end{equation}
 \end{proposition}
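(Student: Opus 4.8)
The plan is to deduce Proposition \ref{prop3} from Corollary \ref{prop1} by a "swapping" argument in the spirit of Bochen Liu, exploiting point--tube duality. We are given a $(\delta,s,\delta^{-\epsilon})$-set $E$ and a $(\delta,t,\delta^{-\epsilon})$-set $F$ with $\dist(E,F)\geq\tfrac12$, and we must produce a single point $x\in F$ whose radial projection does not shrink any large subset of $E$ below the "expected" size $\delta^{-s}$. Suppose, for contradiction, that \eqref{form37} fails for every $x \in F$: for each $x\in F$ there is $E_x' \subset E$ with $|E_x'|\geq\delta^\epsilon|E|$ and $|\pi_x(E_x')|_\delta < \delta^{-s+\tau}$. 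The idea is to feed the \emph{complementary} pair $(F,E)$ into Corollary \ref{prop1}, with the roles of the sets of "points" and "sources of tubes" interchanged relative to how the failure of \eqref{form37} is phrased: Corollary \ref{prop1} applied with its "$E$" equal to our $F$ and its "$F$" equal to our $E$, with parameters $s \rightsquigarrow t$ (wait — $t>1$, so this does not fit the hypothesis $s\le 1$). The correct reading is the other way: apply Corollary \ref{prop1} with its role-"$E$" being our $E$ (dimension $s\le 1$, good), its role-"$F$" being our $F$ (dimension $t\in(1,2]$, good), and the exponent "$\sigma$" chosen appropriately; but then Corollary \ref{prop1}'s conclusion already hands us, for most $y\in E$, many families of $(\delta,\sigma,\delta^{-\eta})$-tubes through $y$ meeting $F$ densely. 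That is not quite what we want either. So the genuine mechanism must be duality.

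\textbf{The duality step.} Here is the mechanism I would actually carry out. For each $x\in F$, the hypothesis $|\pi_x(E_x')|_\delta < \delta^{-s+\tau}$ says that $E_x'$ is covered by $\lesssim \delta^{-s+\tau}$ tubes through $x$; equivalently (this is where the point--tube duality of \eqref{form34} enters, exactly as in the passage from Lemma \ref{lemma4} to Lemma \ref{lemma5}), dualizing $x \mapsto \ell_x := (\mathbf{D}^\ast)^{-1}(x)$ and each point $e\in E$ to a line, the incidence structure "$e \in$ (tube through $x$)" becomes "$\ell_x \in$ (tube through $\mathbf{D}(e)$)". Running this dualization on the whole configuration turns $F$ into a $(\delta,t)$-set of lines $\mathcal{L}$ and the family $\{E_x'\}_{x\in F}$ into: for each $\ell \in \mathcal{L}$, a $(\delta,s)$-set of points $P_\ell$ (coming from $E_x'$) such that $P_\ell$ is covered by few — namely $\lesssim\delta^{-s+\tau}$ — points of the dual of $E$... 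I would instead organize it as follows, which is cleaner: the failure of \eqref{form37} for all $x\in F$ means $F$ is "$(s,\sigma_0)$-Furstenberg-poor" against $E$ with $\sigma_0 = s-\tau$, and after a standard uniformization (Lemma \ref{lemma2}-type pigeonholing on the scale and on the multiplicities, plus dyadic pigeonholing on $|T\cap E|$) we obtain $\Delta\in[\delta,\delta_0]$, a $(\Delta,s,\Delta^{-\zeta})$-set $P_E\subset E$, a $(\Delta,t,\Delta^{-\zeta})$-set $P_F\subset F$, and for each $x\in P_F$ a $(\Delta,\sigma_0+\zeta,\Delta^{-\zeta})$-set of $\Delta$-tubes through $x$, each containing $\gtrsim \Delta^{\sigma_0+O(\zeta)}|P_E|$ points of $P_E$ — where $\zeta=o_\epsilon(1)$. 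This is precisely the hypothesis configuration of Theorem \ref{main2} with the substitution $(s,t,\sigma)\rightsquigarrow(s,t,\sigma_0)$ — \emph{after} we verify $t>1$ (true) and relabel $P_F$ as the "$P_K$"-set of dimension $t$ and $P_E$ as the "$P_E$"-set of dimension $s$. Theorem \ref{main2} then forces $s \leq 1 + \sigma_0 - t + \eta' = 1 + (s-\tau) - t + \eta'$, i.e. $t \leq 1 - \tau + \eta'$, contradicting $t>1$ once $\eta'<\tau$, which we secure by taking $\epsilon,\delta_0$ small enough in terms of $s,t,\tau$.

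\textbf{Filling the uniformization.} The only real work is the passage from "\eqref{form37} fails for every $x\in F$" to the clean $\Delta$-discretized configuration above; but this is almost verbatim the content of the proof of Proposition \ref{prop2} (equivalently, it is the mechanism behind Proposition \ref{prop4}): one replaces the Frostman measures there by the normalized counting measures on $F$ and $E$, uses the hypothesis "$|\pi_x(E_x')|_\delta < \delta^{-s+\tau}$ for all large $E_x'$" in place of "$\Hd\pi_x(K)<\sigma$" to produce, via Lemma \ref{lemma2} applied to $\pi_x(\text{counting measure on }E)$, a uniform pushforward measure, then extracts the transition scale $\Delta_x$ exactly as in \eqref{form9}--\eqref{form10}, builds the tube families as in \eqref{form12}--\eqref{form14}, and pigeonholes $\Delta_x,\Sigma_x$ to be $x$-independent. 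The counting-measure versions are in fact easier than Proposition \ref{prop2} because there is no Frostman exponent loss beyond $\Delta^{-O(\zeta)}$. I therefore would \emph{not} reproduce this; I would write "arguing exactly as in the proof of Proposition \ref{prop4}" (which itself says "arguing exactly as in the proof of Proposition \ref{prop2}"), invoke Theorem \ref{main2}, and conclude.

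\textbf{Remark on where the difficulty lies.} The main obstacle is purely bookkeeping: ensuring that the composite error exponent (the $\zeta$ from uniformization, the $\eta$ from Theorem \ref{main2}, and the $\epsilon$ in the $(\delta,\cdot,\delta^{-\epsilon})$-hypotheses) stays strictly below $\tau$, so that the inequality $t \leq 1-\tau+(\text{errors})$ genuinely contradicts $t>1$. There is no new geometric input — the geometry is entirely contained in Theorem \ref{main2} (hence in the Fu--Ren incidence bound). One subtle point worth flagging in the writeup: Proposition \ref{prop3} is stated for $s\in(0,1]$ and asks for a point in $F$ (the \emph{high}-dimensional set) with good projection of subsets of $E$ (the \emph{low}-dimensional set); this is the "swapped" configuration, and the reason it works is exactly that after dualizing/uniformizing, $E$ plays the role of the dimension-$s$ set and $F$ the dimension-$t$ set in Theorem \ref{main2}, with the Furstenberg exponent $\sigma_0=s-\tau<s\le 1$ legitimately in range.
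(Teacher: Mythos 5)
There is a genuine gap, and it is located exactly at the point where you declare that the uniformized configuration "is precisely the hypothesis configuration of Theorem~\ref{main2}."

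Unwind your relabeling. You propose to set $P_K$ (in Theorem~\ref{main2}) equal to your $P_F$, which is the $(\Delta,t)$-set with $t>1$, and $P_E$ (in Theorem~\ref{main2}) equal to your $P_E$, the $(\Delta,s)$-set. In Theorem~\ref{main2} the tubes pass through the points of $P_E$ (the dimension-$s$ set) and each tube catches $\gtrsim\delta^{\sigma}|P_K|$ points of $P_K$ (the dimension-$t$ set with $t>1$). But the configuration you extract from the counter-assumption is the other way around: for each $x\in P_F$ you have tubes through $x$ catching points of $P_E$. That is, in your picture the tubes pass through the \emph{high}-dimensional set $F$ and catch points of the \emph{low}-dimensional set $E$; in Theorem~\ref{main2} the tubes pass through the low-dimensional set and catch points of the high-dimensional one. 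These are not the same hypothesis. If you try the other relabeling (take "the set through which tubes pass" to be $P_F$, so main2's $s$ becomes your $t$, and "the caught set" to be $P_E$, so main2's $t$ becomes your $s$), then main2's requirement $t>1$ becomes $s>1$, which contradicts $s\leq 1$. The asymmetry is not a technicality: a glance at the case analysis in the proof of Theorem~\ref{main2} shows that case (2)(a) is dismissed precisely using $t>1$ for the dimension of the \emph{caught} set, and in your set-up the caught set has dimension $s\leq 1$, so that dismissal fails and the argument produces no contradiction at all. In other words, the clean displayed inequality $s\leq 1+\sigma_0-t+\eta'$ that you want to quote simply does not follow from any legal application of Theorem~\ref{main2} here.

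The paper's argument is genuinely different and this is exactly where the "swapping trick" enters. The paper applies the Theorem~\ref{main2} machinery only in the legal direction (via Proposition~\ref{prop4} and Corollary~\ref{prop1}, with $\sigma:=s$): for most $y\in E$ it manufactures $(\delta,s,\delta^{-\eta})$-families of tubes through $y$ whose union covers almost all of $F$, sorted by the dyadic size of $|T\cap F|$. This is precisely the information you flag and then discard as "not quite what we want." It is, in fact, what you want: the proof then introduces the high-multiplicity tube family $\mathcal T_j^{\mathrm{high}}$, observes that the counter-assumption ($|\pi_x(E_{x,\mathrm{bad}})|_\delta<\delta^{-s+\tau}$ for all $x\in F$) bounds how many such high-multiplicity tubes can pass through any given $x\in F$, and closes the argument with a Cauchy--Schwarz/second-moment count on the incidence set $\mathcal I'(\overline{\mathcal T},E')$, pitted against a standard incidence lower bound. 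That second counting step is irreducible; it cannot be replaced by a further invocation of Theorem~\ref{main2}. Your write-up skips it entirely, which is why the bookkeeping you describe in the final remark would never close. (You also omit the harmless but necessary reduction to $|E|\leq\delta^{-s}$ via Lemma~\ref{lemma6}, but that is minor by comparison.)
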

 
 In the proof of Proposition \ref{prop3}, it will be useful to assume (additionally) that $|E| \leq \delta^{-s}$. Before giving the details, we reduce matters to that case by a rather abstract argument, which is essentially the same as the proof of \cite[Proposition 25]{MR4148151}.
 
 \begin{lemma}\label{lemma6} Proposition \ref{prop3} follows from its special case where $|E| \leq \delta^{-s}$. More precisely, if the conclusion of Proposition \ref{prop3} holds with this extra assumption and constant "$5\epsilon$", then the conclusion holds without the assumption, with constant "$\epsilon$". \end{lemma}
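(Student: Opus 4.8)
The plan is to prove Lemma~\ref{lemma6} by a pigeonholing argument that decomposes an arbitrary $(\delta,s,\delta^{-5\epsilon})$-set $E$ into boundedly many pieces of cardinality at most $\delta^{-s}$, and then locate a single point $x \in F$ that works \emph{simultaneously} for all of the pieces. Concretely, suppose $E \subset B(1)$ is a $\delta$-separated $(\delta,s,\delta^{-\epsilon})$-set. Since $E$ is $\delta$-separated and lives in $B(1)$, we always have $|E| \leq C\delta^{-2}$, so we may partition $E = E_{1} \cup \cdots \cup E_{M}$ with $|E_{i}| \leq \delta^{-s}$ for each $i$, and $M \leq C\delta^{s-2}$ (for instance, greedily peel off subsets of cardinality exactly $\lceil \delta^{-s}\rceil$; a cleaner route is to fix a covering of $B(1)$ by $\sim \delta^{s-2}$ balls of radius $\delta^{(2-s)/ \text{something}}$, but the greedy partition suffices). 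Each $E_{i}$, being a subset of a $(\delta,s,\delta^{-\epsilon})$-set, is itself a $(\delta,s,\delta^{-\epsilon})$-set, though possibly with $|E_{i}|$ much smaller than $|E|$; this will be harmless because the quantitative conclusion \eqref{form37} will be applied at the scale of $E_{i}$ and then transferred back to $E$.

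The key point is the transfer. Apply the special case of Proposition~\ref{prop3} (with constant "$5\epsilon$") to \emph{each} pair $(E_{i}, F)$: this produces, for each $i$, a subset $F_{i}^{\mathrm{bad}} \subset F$ with $|F_{i}^{\mathrm{bad}}| < \delta^{5\epsilon}|F|$ (the set of "bad" $x$) such that every $x \in F \setminus F_{i}^{\mathrm{bad}}$ satisfies $|\pi_{x}(E_{i}')|_{\delta} \geq \delta^{-s+\tau}$ for all $E_{i}' \subset E_{i}$ with $|E_{i}'| \geq \delta^{5\epsilon}|E_{i}|$. Here I am using the standard reformulation (as in the first paragraph of the proof of Corollary~\ref{prop1}) that producing a single good $x$ is equivalent to producing a good $x$ outside a set of cardinality $< \delta^{5\epsilon}|F|$: if the good set were smaller, its complement would be a $(\delta,t,\delta^{-5\epsilon-\epsilon})$-set and we could find a good point inside it, a contradiction — so in fact the special case yields this "most points are good" statement for free. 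Taking the union, $F^{\mathrm{bad}} := \bigcup_{i=1}^{M} F_{i}^{\mathrm{bad}}$ has $|F^{\mathrm{bad}}| < M \delta^{5\epsilon}|F| \leq C\delta^{s-2} \delta^{5\epsilon}|F|$. The worry is the factor $\delta^{s-2}$, which is a large negative power of $\delta$; this is why the gap between "$5\epsilon$" and "$\epsilon$" matters.

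To close this gap I would replace the single greedy partition by an \emph{iterated} decomposition that keeps $M$ polynomial in $\delta^{-\epsilon}$ rather than $\delta^{-2}$. The honest fix: instead of $M \sim \delta^{s-2}$ pieces of size $\le \delta^{-s}$, first cover $B(1)$ by $N_{0} \sim \delta^{-2\epsilon_{0}}$ balls $B_{1},\dots,B_{N_{0}}$ of radius $\delta^{\epsilon_{0}}$ for a suitable small $\epsilon_{0}$ comparable to $\epsilon$; by the $(\delta,s,\delta^{-\epsilon})$-property, each $E \cap B_{j}$ has at most $\delta^{-\epsilon}(\delta^{\epsilon_{0}})^{s}|E| = \delta^{s\epsilon_{0}-\epsilon}|E|$ points, and rescaling $B_{j}$ to unit size, $E\cap B_j$ (rescaled) is a $(\delta^{1-\epsilon_0}, s, \delta^{-\epsilon})$-set. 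Iterating $\sim 1/\epsilon_0$ times brings the cardinality below $\delta^{-s}$ while the total number of pieces stays bounded by $(\delta^{-2\epsilon_0})^{1/\epsilon_0} = \delta^{-2}$ — which does not help directly. So the genuinely correct argument is the one in \cite[Proposition~25]{MR4148151}: one does \emph{not} need the pieces to have bounded count; one only needs that a random $x \in F$ is good for a \emph{random} piece, and then averages. Precisely: for $x \in F$, call $x$ good if $|\pi_x(E')|_\delta \ge \delta^{-s+\tau}$ for all large $E' \subset E$; if $x$ is bad, there is a witnessing large $E'_x \subset E$, and by pigeonholing $E'_x$ meets some piece $E_i$ in $\ge |E'_x|/M \ge \delta^{5\epsilon}|E_i|$ points (after adjusting constants, using $|E_i|\le |E|$ and $M$ not too large relative to $\delta^{-\epsilon}$ — this forces $\epsilon_0$ and hence the slack between $\epsilon$ and $5\epsilon$). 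Hence $x \in F_i^{\mathrm{bad}}$ for that $i$, so $F^{\mathrm{bad}} \subset \bigcup_i F_i^{\mathrm{bad}}$, and one needs $\sum_i |F_i^{\mathrm{bad}}| < \delta^{\epsilon}|F|$, i.e. $M\delta^{5\epsilon} < \delta^{\epsilon}$, i.e. $M < \delta^{-4\epsilon}$. This is achievable precisely because we may split $E$ into $M \lesssim \delta^{-3\epsilon}$ pieces each of size $\le \delta^{-s}$: indeed writing $E' \subset E$ with $|E'|\ge \delta^\epsilon|E|$, one such pigeonholed piece has $|E'\cap E_i| \ge \delta^\epsilon|E|/M \ge \delta^{4\epsilon}|E| \ge \delta^{4\epsilon}|E_i|\cdot(|E|/|E_i|)\ge \delta^{4\epsilon}|E_i|$ — wait, this requires $|E_i|\le |E|$, which holds, and then $|E'\cap E_i|\ge \delta^{4\epsilon}|E|\ge \delta^{4\epsilon}|E_i|$, i.e. $\ge \delta^{5\epsilon}|E_i|$ after absorbing a further $\delta^\epsilon$ if one is careful. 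Producing $M\lesssim\delta^{-3\epsilon}$ pieces of size $\le\delta^{-s}$ requires $|E|\le \delta^{-3\epsilon}\cdot\delta^{-s} = \delta^{-s-3\epsilon}$, which need \emph{not} hold in general — so one extra preliminary reduction is needed: first split $E$ into $\lesssim \delta^{-2+s}$ many $(\delta,s)$-subsets of size $\le \delta^{-s}$...

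The genuine crux, and the step I expect to be the main obstacle, is exactly this bookkeeping: balancing the number of pieces against the permitted loss $\delta^{5\epsilon} \to \delta^{\epsilon}$. The resolution (following \cite[Proposition~25]{MR4148151} faithfully) is that one does not split $E$ into \emph{small} pieces of size $\le\delta^{-s}$ at all — rather, one observes that if $|E| > \delta^{-s}$ then $E$ contains a $(\delta,s,\delta^{-\epsilon})$-subset $\tilde E$ with $|\tilde E| \sim \delta^{-s}$ (by the $(\delta,s)$-property, any subset of density $\delta^{-s}/|E|$ is still a $(\delta,s, \delta^{-\epsilon}\cdot|E|\delta^s)$-set, and one checks $|E|\delta^s \le \delta^{-\epsilon}$ fails, so instead one passes to a maximal $\delta^{\epsilon/s}$-separated subset or uses a covering). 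Cleanly: let $\rho = \delta^{1-\epsilon/(2s)}$ be a coarser scale; then $E$ is a $(\rho,s,\rho^{-\epsilon'})$-set at scale $\rho$ for a comparable $\epsilon'$, with $|E|_\rho \le \rho^{-s}\cdot\rho^{-\epsilon'}\lesssim\delta^{-s}$ roughly. Applying the special case at the single chosen subset of size $\le\delta^{-s}$ and then noting that $|\pi_x(E')|_\delta \ge |\pi_x(E'\cap\tilde E)|_\delta$ transfers the bound. I would structure the final write-up as: (1) reduce to producing a good $x$ outside a set of size $<\delta^\epsilon|F|$ (the "most points good" reformulation); (2) if $|E|\le\delta^{-s}$ invoke the hypothesis directly; (3) otherwise extract from $E$ a $(\delta,s,\delta^{-5\epsilon})$-subset $\tilde E$ of cardinality in $[\tfrac12\delta^{-s},\delta^{-s}]$ that is \emph{robust}, meaning every large subset of $E$ has large intersection with $\tilde E$ — this is the one nontrivial lemma, proved by a counting/pigeonhole or by the dyadic-pigeonholing of \cite[Prop.~25]{MR4148151}; (4) apply the special case to $(\tilde E, F)$ with constant $5\epsilon$ and pull back. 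Step~(3) is where essentially all the content lies and where I would spend the most care; the rest is routine.
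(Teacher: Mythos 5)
There is a genuine gap: the ``robust subset'' you propose in step~(3) does not exist when $|E| \gg \delta^{-s}$. If $\tilde E \subset E$ is any fixed subset with $|\tilde E| \le \delta^{-s}$, then $E' := E \setminus \tilde E$ satisfies $|E'| \ge (1 - \delta^{-s}/|E|)|E| \ge \tfrac12|E| \ge \delta^{\epsilon}|E|$ once $|E| \ge 2\delta^{-s}$, yet $E' \cap \tilde E = \emptyset$. So no single piece of size $\le \delta^{-s}$ can intersect every subset of relative density $\delta^{\epsilon}$, and your step (3) --- which you correctly identify as carrying ``essentially all the content'' --- cannot be carried out. The coarser-scale reduction to $\rho = \delta^{1-\epsilon/(2s)}$ does not rescue this either: passing to scale $\rho$ changes what ``$|\pi_x(E')|_\delta \ge \delta^{-s+\tau}$'' means, and you have no control on which $\delta$-cells of $E'$ land in which $\rho$-cells of $\tilde E$.

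The obstruction you identified correctly midway --- that the Katz-Tao decomposition produces $N \lesssim |E|\delta^{s - 2\epsilon}$ pieces, which can be $\gg \delta^{-4\epsilon}$, so a crude union bound on the $F_j^{\mathrm{bad}}$ overshoots $\delta^{\epsilon}|F|$ --- is the right thing to worry about, but the fix is not to shrink the number of pieces or to avoid the decomposition. The fix, which is what the paper's proof of this lemma does, is to keep all $N$ pieces and \emph{weight} the bad sets $F_j$ by $|E_j|/|E|$. Concretely: for each bad $x$, the sum $\sum_j |E_x \cap E_j| \ge \delta^{\epsilon}|E|$, and the indices $j$ where $|E_x \cap E_j| < \delta^{5\epsilon}|E_j|$ contribute at most $\sum_j \delta^{5\epsilon}|E_j| = \delta^{5\epsilon}|E|$ to that sum; hence the ``good'' index set $\mathcal{J}_x := \{j : |E_x \cap E_j| \ge \delta^{5\epsilon}|E_j|\}$ satisfies $\sum_{j\in\mathcal{J}_x}|E_j| \gtrsim \delta^{\epsilon}|E|$, i.e.\ has total weight $\gtrsim \delta^{\epsilon}$ under $|E_j|/|E|$. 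Each $j \in \mathcal{J}_x$ puts $x$ into $F_j := F_j^{\mathrm{bad}}$, so $\mathbf{1}_F \le \delta^{-2\epsilon}\sum_j \tfrac{|E_j|}{|E|}\mathbf{1}_{F_j}$ pointwise on $F$; summing this over $F$ and using both $\sum_j |E_j|/|E| \le 1$ and the special-case bound $\max_j |F_j| \le \delta^{3\epsilon}|F|$ yields $|F| \le \delta^{\epsilon}|F|$, a contradiction. So after your uniform pigeonholing step (``$E_x'$ meets some piece in $\ge |E_x'|/M$ points'') --- which only finds one good index and loses a factor of $M$ --- you should instead retain the whole family $\mathcal{J}_x$ and average over it with the weights $|E_j|/|E|$; then $N$ being large causes no harm, because the weights sum to $1$ regardless of $N$. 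A minor additional point: you do still need to discard the small pieces with $|E_j| < \delta^{-s+4\epsilon}$ (so that the surviving pieces are $(\delta,s,\delta^{-4\epsilon})$-sets to which the special case applies), and you should check that the discarded union has cardinality $\lesssim N\delta^{-s+4\epsilon} \le \delta^{2\epsilon}|E|$, which is absorbed.
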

 
 \begin{proof}  Let $E,F \subset B(1)$ be as in the statement of Proposition \ref{prop3} (but without the assumption $|E| \leq \delta^{-s}$). We claim the existence of $x \in F$ such that \eqref{form37} holds. We make a counter assumption: for every $x \in F$ there exists a set $E_{x} \subset E$ with cardinality $|E_{x}| \geq \delta^{\epsilon}|E|$ with the property $|\pi_{x}(E_{x})|_{\delta} < \delta^{-s + \tau}$. We apply Lemma \ref{lemma3} to the set $E$ with the parameter $\epsilon > 0$. The conclusion is a decomposition
 \begin{equation}\label{form55} E = E_{1} \cup \ldots \cup E_{N} \end{equation}
 into disjoint Katz-Tao $(\delta,s,1)$-sets $E_{j}$ with $N \leq |E|\delta^{s - 2\epsilon}$. Evidently each of the sets $E_{j} \subset B(1)$ satisfies $|E_{j}| \leq \delta^{-s}$, but some of them may be too small to be $(\delta,s,\delta^{-5\epsilon})$-sets, so the special case assumed in Lemma \ref{lemma6} is not immediately applicable to each $E_{j}$. This is easy to fix: we simply discard those sets $E_{j}$ with cardinality $|E_{j}| < \delta^{-s + 4\epsilon}$. The union of such sets, temporarily denoted $B$, has cardinality $|B| < N\delta^{-s + 4\epsilon} \leq \delta^{2\epsilon}|E|$. In particular, $|E_{x} \cap (E \, \setminus \, B)| \geq \delta^{\epsilon}|E| - \delta^{2\epsilon}|E| \gtrsim \delta^{\epsilon}|E|$ for all $x \in F$. This means that $E \, \setminus \, B$ satisfies all the initial assumptions of $E$. Replacing $E$ by $E \, \setminus \, B$ if needed, we may therefore assume that every set $E_{j}$ in the decomposition \eqref{form55} satisfies $|E_{j}| \geq \delta^{-s + 4\epsilon}$. This implies that every $E_{j}$ is a $(\delta,s,\delta^{-4\epsilon})$-set of cardinality $|E_{j}| \leq \delta^{-s}$, and the special case assumed in Lemma \ref{lemma6} is now applicable to every $E_{j}$.
 
We use this as follows. For $j \in \{1,\ldots,N\}$ fixed, we define
\begin{displaymath} F_{j} := \{x \in F : \exists \, E_{x}' \subset E_{j} \text{ s.t. } |E_{x}'| \geq \delta^{5\epsilon}|E_{j}| \text{ and } |\pi_{x}(E_{x}')|_{\delta} < \delta^{s + \tau}\}. \end{displaymath}
We claim that 
\begin{equation}\label{form57} |F_{j}| \leq \delta^{3\epsilon}|F|, \qquad j \in \{1,\ldots,N\}. \end{equation}
Indeed, if $|F_{j}| > \delta^{3\epsilon}|F|$, then $F_{j}$ is a $(\delta,t,\delta^{-4\epsilon})$-set, and contains (by the current hypothesis) a point $x_{0} \in F_{j}$ violating the defining property of $F_{j}$.
 
For $x \in F$ fixed, let $\mathcal{J}_{x} := \{1 \leq j \leq N : |E_{x} \cap E_{j}| \geq \delta^{5\epsilon}|E_{j}|\}$, and note that
 \begin{displaymath} \delta^{\epsilon}|E| \leq |E_{x}| = \sum_{j = 1}^{N} |E_{x} \cap E_{j}| \leq \sum_{j \notin \mathcal{J}_{x}} \delta^{5\epsilon}|E_{j}| + \sum_{j \in \mathcal{J}_{x}} |E_{j}| \leq \delta^{5\epsilon}|E| + \sum_{j \in \mathcal{J}_{x}} |E_{j}|, \end{displaymath}
 and consequently (for $\delta > 0$ small enough)
 \begin{equation}\label{form56} \sum_{j \in \mathcal{J}_{x}} |E_{j}| \geq \delta^{2\epsilon}|E| \quad \Longrightarrow \quad \delta^{-2\epsilon} \sum_{j \in \mathcal{J}_{x}} \frac{|E_{j}|}{|E|} \geq 1. \end{equation} 
 On the other hand, for every $j \in \mathcal{J}_{x}$ we have $|E_{x} \cap E_{j}| \geq \delta^{5\epsilon}|E_{j}|$ and $|\pi_{x}(E_{x} \cap E_{j})|_{\delta} < \delta^{-s + \tau}$. This means (taking $E_{x}' := E_{x} \cap E_{j}$) that $x \in F_{j}$ for all $j \in \mathcal{J}_{x}$. To summarise, for every $x \in F$ there exists a family $\mathcal{J}_{x} \subset \{1,\ldots,N\}$ satisfying \eqref{form56} with the property that $x \in F_{j}$ for all $j \in \mathcal{J}_{x}$. Consequently, 
\begin{displaymath} \mathbf{1}_{F} \leq \delta^{-2\epsilon} \sum_{j = 1}^{N} \frac{|E_{j}|}{|E|} \cdot \mathbf{1}_{F_{j}}, \end{displaymath}
and finally
\begin{align*} |F| \leq \delta^{-2\epsilon} \sum_{j = 1}^{N} \frac{|E_{j}|}{|E|} \cdot |F_{j}| \leq \delta^{-2\epsilon} \max_{1 \leq j \leq N} |F_{j}| \stackrel{\eqref{form57}}{\leq} \delta^{\epsilon}|F|. \end{align*} 
This contradiction completes the proof of the lemma. \end{proof}

We are then equipped to prove Proposition \ref{prop3}

 \begin{proof}[Proof of Proposition \ref{prop3}]  Thanks to Lemma \ref{lemma6}, we may assume that $E$ is a non-empty $(\delta,s,\delta^{-\epsilon})$-set satisfying $|E| \leq \delta^{-s}$. Fix $\eta \in (0,1]$, and let $0 < \epsilon \ll \eta$ be so small that the conclusions of Corollary \ref{prop1} apply with constant "$\eta$" if the hypotheses are in force with constant "$4\epsilon$". We will need to take $\epsilon,\eta > 0$ so small that
\begin{equation}\label{tau} C(\epsilon + \eta)  < \tau \end{equation}
for some absolute constant $C \geq 1$. This determines the necessary size of "$\epsilon$" via Corollary \ref{prop1}. The $t$-dependence of "$\epsilon$" arises from the fact that if $t > 1$, as we assume, then
\begin{displaymath} s > \max\{1 + s - t,0\}. \end{displaymath}
This makes Corollary \ref{prop1} applicable with $\sigma := s$, with $\epsilon = \epsilon(s,t,\tau,\eta)$ (where $\eta = \eta(\tau)$). Now, applying Corollary \ref{prop1} with $\sigma := s$, we may find a subset $E' \subset E$ with $|E'| \geq (1 - \delta^{4\epsilon})|E|$, and for all $y \in E'$ the $(\delta,s,\delta^{-\eta})$-sets $\mathcal{T}^{y}_{1},\ldots,\mathcal{T}_{L}^{y}$ (here $L = 3\log (1/\delta)$) of $\delta$-tubes containing $y$, with $\sim \delta$-separated directions, and the properties \eqref{a'}--\eqref{e'}. In particular, by \eqref{e'},
\begin{equation} \label{Fy}
|F^{y}| \geq (1 - \delta^{4\epsilon})|F|,
\end{equation}
 where
\begin{displaymath} F^{y}_{j} := F \cap (\cup \mathcal{T}_{j}^{y}) \quad \text{and} \quad F^{y} := \bigcup_{j = 1}^{L} F^{y}_{j}. \end{displaymath}
Since $\dist(E,F) \geq \tfrac{1}{2}$, and $y \in E$, the sets $F \cap T$, $T \in \mathcal{T}_{j}^{y}$, have bounded overlap. Using \eqref{c'}-\eqref{d'} in Corollary \ref{prop1}, this yields that either $\mathcal{T}_{j}^y=\emptyset$, or
\begin{equation}\label{form43} \delta^{8\epsilon}|F|/2^{j} \lesssim |F^{y}_{j}|/2^{j} \leq |\mathcal{T}^{y}_{j}| \lesssim |F|/2^{j} \end{equation}
for all $j \in \{1,\ldots,L\}$. %The lower bound on the left hand side uses that in this case $|F_{j}^{y}| \geq \delta^{8\epsilon}|F|$, recall Corollary \ref{prop1}\eqref{d'}.
We define
\begin{displaymath} \mathcal{T}^{y} := \bigcup_{j = 1}^{L} \mathcal{T}^{y}_{j} \quad \text{and} \quad \mathcal{T} := \bigcup_{y \in E'} \mathcal{T}^{y}. \end{displaymath}

Now, we make a counter assumption: \eqref{form37} fails for all $x \in F$. Thus, for every $x \in F$, there exists a subset $E_{x,\mathrm{bad}} \subset E$ such that $|E_{x,\mathrm{bad}}| \geq \delta^{\epsilon}|E|$, and
\begin{equation} \label{Ebad}
|\pi_{x}(E_{x,\mathrm{bad}})|_{\delta} < \delta^{-s+\tau}.
\end{equation}
We note that each $E_{x,\mathrm{bad}}$ has a large intersection with $E'$, which follows from $|E'| \geq (1 - \delta^{4\epsilon})|E|$:
\begin{displaymath} |E_{x,\mathrm{bad}} \cap E'| \geq |E_{x,\mathrm{bad}}| + |E'| - |E| \geq (\delta^{\epsilon} - \delta^{4\epsilon})|E| \gtrsim \delta^{\epsilon}|E|. \end{displaymath}
Given this observation, we may assume that $E_{x,\mathrm{bad}} \subset E'$ in the sequel (by replacing $E_{x,\mathrm{bad}}$ with the intersection $E_{x,\mathrm{bad}} \cap E'$ if necessary). We also define the sets
\begin{displaymath} E_{x,\mathrm{good}}^{j} := \{y \in E' : x \in F^{y}_{j}\} \quad \text{and} \quad E_{x,\mathrm{good}} := \bigcup_{j = 1}^{L} E_{x,\mathrm{good}}^{j} = \{y \in E' : x \in F^{y}\}. \end{displaymath}
We observe that
\begin{displaymath} (1 - \delta^{4\epsilon})|E'||F| \overset{\eqref{Fy}}{\leq} \sum_{y \in E'} |F^{y}| = \sum_{x \in F} |E_{x,\mathrm{good}}|, \end{displaymath}
so there exists a subset $F' \subset F$ with the properties
\begin{displaymath} |F'| \geq (1 - \delta^{2\epsilon})|F| \quad \text{and} \quad |E_{x,\mathrm{good}}| \geq (1 - \delta^{2\epsilon})|E'| \text{ for all } x \in F'. \end{displaymath}
Note that for $x \in F'$, and recalling $E_{x,\mathrm{bad}},E_{x,\mathrm{good}} \subset E'$, we have
\begin{displaymath} |E_{x,\mathrm{bad}} \cap E_{x,\mathrm{good}}| \geq |E_{x,\mathrm{bad}}| + |E_{x,\mathrm{good}}| - |E'| \geq (\delta^{\epsilon} - \delta^{2\epsilon})|E'| \gtrsim \delta^{\epsilon}|E'|. \end{displaymath}
In particular, since $E_{x,\mathrm{good}}$ is a union of the $L = 3 \log(1/\delta)$ sets $E_{x,\mathrm{good}}^{j}$, there exists $j(x) \in \{1,\ldots,L\}$ such that
\begin{displaymath} |E_{x,\mathrm{bad}} \cap E_{x,\mathrm{good}}^{j(x)}| \geq \delta^{2\epsilon}|E'|, \qquad x \in F'. \end{displaymath}

Since $j(x)$ takes $\lesssim \log(1/\delta)$ values, we may fix $j \in \{1,\ldots,L\}$ and $F'' \subset F'$ with $|F''| \geq \delta^{\epsilon}|F'| \gtrsim \delta^{\epsilon}|F|$ such that $j(x) = j$ for all $x \in F''$. We finally write
\begin{displaymath}
E_{x} := E_{x,\mathrm{bad}} \cap E_{x,\mathrm{good}}^{j},\quad x\in F''.
\end{displaymath}

We introduce the "high multiplicity" sub-families
\begin{equation} \label{T-high}
\mathcal{T}_{j}^{\text{high}} := \{T \in \mathcal{T} : |\{y \in E' : T \in \mathcal{T}_{j}^{y}\}| \geq \delta^{-\tau/2}\}, \qquad j \in \{1,\ldots,L\}.
\end{equation}
We claim that for all $x \in F''$, there exists a set of $\delta$-tubes $\mathcal{T}_{x} \subset \mathcal{T}^{\text{high}}_{j}$ containing $x$, and with the properties
\begin{enumerate}[(i)]
\item \label{i''} $|\mathcal{T}_{x}| \lesssim \delta^{-s+\tau}$,
\item \label{ii''} $\sum_{T \in \mathcal{T}_{x}} |\{y \in E' : T \in \mathcal{T}_{j}^{y}\}| \geq \tfrac{1}{2}|E_{x}| \geq \delta^{2\epsilon}|E'|$.
\end{enumerate}
Let, initially, $\mathcal{T}_{x}'$ consist of all tubes in $\mathcal{T}$ which contain $x$, and are contained in $\mathcal{T}_{j}^{y}$ for some $y \in E_{x}$. Since, by Corollary \ref{prop1}\eqref{a'}, such tubes in particular contain a point from $E_{x} \subset E_{x,\mathrm{bad}}$, we have
\begin{displaymath} |\mathcal{T}_{x}'| \lesssim |\pi_{x}(E_{x,\mathrm{bad}})|_{\delta} \overset{\eqref{Ebad}}{\leq} \delta^{-s + \tau}. \end{displaymath}
We will find $\mathcal{T}_{x}$ as a subset of $\mathcal{T}_{x}'$, so \eqref{i''} is automatically satisfied.

Note that if $y \in E_{x}$, then $\mathcal{T}_{x}' \cap \mathcal{T}^{y}_{j} \neq \emptyset$. This is because $y \in E_{x} \subset E_{x,\mathrm{good}}^{j}$ implies $x \in \cup \mathcal{T}^{y}_{j}$ by definition, thus $x\in T$ for some $T\in\mathcal{T}^{y}_{j}$ and then also $T\in\mathcal{T}_{x}'$. As a corollary,
\begin{equation}\label{form42} \sum_{T \in \mathcal{T}_{x}'} |\{y \in E' : T \in \mathcal{T}_{j}^{y}\}| \geq \sum_{y \in E_{x}} |\mathcal{T}_{x}' \cap \mathcal{T}_{j}^{y}| \geq |E_{x}|. \end{equation}
Thus, $\mathcal{T}_{x}'$ satisfies properties \eqref{i''}--\eqref{ii''}, but it may not necessarily be contained in $\mathcal{T}^{\text{high}}_{j}$. It remains to discard the "low multiplicity" tubes $\mathcal{T}_{x}^{\text{low}} := \mathcal{T}_{x}' \, \setminus \, \mathcal{T}^{\text{high}}_{j}$.
By the definition \eqref{T-high},
\begin{displaymath}
\sum_{T \in \mathcal{T}_{x}^{\text{low}}} |\{y \in E' : T \in \mathcal{T}^{y}_{j}\}| \leq |\mathcal{T}_{x}'| \cdot \delta^{-\tau/2} \overset{\eqref{i''}}{\lesssim} \delta^{-s + \tau/2}. \end{displaymath}
Recalling that $|E_{x}| \geq \delta^{2\epsilon}|E'| \geq \delta^{-s + 3\epsilon}$, the right hand side is much smaller than $|E_{x}|$ if the constant "$C$" in \eqref{tau} is large enough. Comparing this with the lower bound in \eqref{form42}, we set $\mathcal{T}_{x} := \mathcal{T}_{x}' \, \setminus \, \mathcal{T}_{x}^{\text{low}} \subset \mathcal{T}^{\text{high}}_{j}$, and we see that property \eqref{ii''} remains valid for $\mathcal{T}_{x}$.

Now, notice that, using $\mathcal{T}_{x} \subset \mathcal{T}^{\text{high}}_{j}$, we have
\begin{align*} \delta^{2\epsilon}|E'||F''| & \stackrel{\eqref{ii''}}{\leq} \sum_{x \in F''} \sum_{T \in \mathcal{T}_{x}} |\{y \in E' : T \in \mathcal{T}^{y}_{j}\}| \\
& = \sum_{y \in E'} \sum_{x \in F''} \sum_{T \in \mathcal{T}_{x}} \mathbf{1}_{\mathcal{T}_{j}^{y} \cap \mathcal{T}^{\text{high}}_{j}}(T)\\
& = \sum_{y \in E'} \sum_{T \in \mathcal{T}_{j}^{y} \cap \mathcal{T}_{j}^{\text{high}}} \sum_{x \in F''} \mathbf{1}_{\mathcal{T}_{x}}(T) \leq \sum_{y \in E'} \sum_{T \in \mathcal{T}_{j}^{y} \cap \mathcal{T}_{j}^{\text{high}}} |T \cap F|. \end{align*}
Recalling that $|F''|\gtrsim \delta^\epsilon|F|$, this implies that there exists a further subset $E'' \subset E'$ with $|E''| \geq \delta^{O(\epsilon)}|E'|$ such that
\begin{displaymath} 2^{j}|\mathcal{T}_{j}^{y} \cap \mathcal{T}^{\text{high}}_{j}| \stackrel{\eqref{c'}}{\geq} \sum_{T \in \mathcal{T}_{j}^{y} \cap \mathcal{T}_{j}^{\text{high}}} |T \cap F| \geq \delta^{O(\epsilon)}|F|, \qquad y \in E''. \end{displaymath}
In particular,
\begin{equation}\label{form50} |\mathcal{T}^{y}_{j} \cap \mathcal{T}_{j}^{\text{high}}| \geq \delta^{O(\epsilon)}|F|/2^{j} \stackrel{\eqref{form43}}{\gtrsim} \delta^{O(\epsilon)}|\mathcal{T}^{y}_{j}|, \qquad y \in E''. \end{equation}
Since $\mathcal{T}^{y}_{j}$ was a $(\delta,s,\delta^{-\eta})$-set for $y \in E'$, this shows that $\mathcal{T}^{y}_{j} \cap \mathcal{T}_{j}^{\text{high}}$ is a $(\delta,s,\delta^{-\eta - O(\epsilon)})$-set of cardinality $\ge \delta^{O(\epsilon)}|F|/2^{j}$, for all $y \in E''$. In the sequel we write
\begin{displaymath}
M := |F|/2^{j}.
\end{displaymath}

Write $\overline{\mathcal{T}} := \mathcal{T}_{j}^{\text{high}} \cap \bigcup_{y \in E''} \mathcal{T}_{j}^{y}$. We define the following set of "incidences":
\begin{displaymath} \mathcal{I}'(\overline{\mathcal{T}},E') := \{(y,T) \in E' \times \overline{\mathcal{T}} : T \in \mathcal{T}_{j}^{y}\}. \end{displaymath}
We seek upper and lower bounds for the cardinality of $\mathcal{I}'(\overline{\mathcal{T}},E')$. For the lower bound, note that $\overline{\mathcal{T}} \subset \mathcal{T}_{j}^{\text{high}}$ and so, recalling the definition \eqref{T-high},
\begin{equation}\label{form44} |\mathcal{I}'(\overline{\mathcal{T}},E')| = \sum_{T \in \overline{\mathcal{T}}} |\{y \in E' : T \in \mathcal{T}_{j}^{y}\}| \geq |\overline{\mathcal{T}}| \cdot \delta^{-\tau/2}. \end{equation}
For the upper bound, we start with Cauchy-Schwarz:
\begin{align} |\mathcal{I}'(\overline{\mathcal{T}},E')| & \leq |\overline{\mathcal{T}}|^{1/2} \Big(\sum_{T \in \overline{\mathcal{T}}} |\{y \in E' : T \in \mathcal{T}_{j}^{y}\}|^{2} \Big)^{1/2} \notag\\
&\label{form45} = |\overline{\mathcal{T}}|^{1/2} \Big( \sum_{y \neq y'\in E'} |\mathcal{T}_{j}^{y} \cap \mathcal{T}_{j}^{y'}| + |\mathcal{I}'(\overline{\mathcal{T}},E')| \Big)^{1/2} \end{align}
Since any tube in $\mathcal{T}_{j}^{y} \cap \mathcal{T}_{j}^{y'}$ passes through $y$ and $y'$, the $(\delta,s,\delta^{-\eta})$-set property of $\mathcal{T}_{j}^{y}$ yields (as usual using directions as a proxy for tubes passing through a fixed point)
\begin{displaymath} |\mathcal{T}_{j}^{y} \cap \mathcal{T}_{j}^{y'}| \lesssim \delta^{-\eta}\left(\frac{\delta}{|y - y'|} \right)^{s}|\mathcal{T}_{j}^{y}| \stackrel{\eqref{form43}}{\lesssim} \delta^{-\eta}\left(\frac{\delta}{|y - y'|} \right)^{s}M. \end{displaymath}
Plugging this back into \eqref{form45}, we find that
\begin{displaymath} |\mathcal{I}'(\overline{\mathcal{T}},E')| \lesssim \big(|\overline{\mathcal{T}}|\delta^{s-\eta}M\big)^{1/2} \Big(\sum_{y \neq y'} \frac{1}{|y - y'|^{s}} \Big)^{1/2} + |\overline{\mathcal{T}}|. \end{displaymath}
The double sum on the right runs over distinct $y,y' \in E' \subset E$, where $E$ was assumed to be a $\delta$-separated $(\delta,s,\delta^{-\epsilon})$-set with $|E| \leq \delta^{-s}$. Therefore, a standard calculation (fix $y \in E$ and split $y' \in E \, \setminus \, \{y\}$ into dyadic annular regions) yields
\begin{displaymath} \sum_{y \neq y'} \frac{1}{|y - y'|^{s}} \lesssim \delta^{-2s - O(\epsilon)}. \end{displaymath}
Therefore, finally,
\begin{displaymath}
|\mathcal{I}'(\overline{\mathcal{T}},E')| \lesssim \delta^{-O(\epsilon+\eta)}|\overline{\mathcal{T}}|^{1/2}(\delta^{-s}M)^{1/2} + |\overline{\mathcal{T}}|.
\end{displaymath}
 When this upper bound is compared with the lower bound in \eqref{form44}, we find that
\begin{equation}\label{form46}
|\overline{\mathcal{T}}| \lesssim \delta^{-O(\epsilon+\eta)} \delta^{-s + \tau}M.
\end{equation}
Recalling that $O(\epsilon+\eta) < \tau$ by \eqref{tau}, we see that \eqref{form46} will now lead to a contradiction if we manage to show that
\begin{equation}\label{form51} |\overline{\mathcal{T}}| \geq \delta^{O(\epsilon+\eta)}\delta^{-s}M. \end{equation}
For this purpose, recall from \eqref{form50} that $\overline{\mathcal{T}}$ contains for every $y \in E''$ the $(\delta,s,\delta^{-\eta - O(\epsilon)})$-set $\mathcal{T}_{j}^{y} \cap \mathcal{T}_{j}^{\text{high}}$ of cardinality $\geq \delta^{O(\epsilon)}M$, and all tubes in $\mathcal{T}_{j}^{y} \cap \mathcal{T}_{j}^{\text{high}}$ contain $y$. Also, $E''$ is a non-empty $(\delta,s,\delta^{-O(\epsilon)})$-set. With these observations in hand, the lower bound \eqref{form51} follows from standard incidence bounds; see e.g. \cite[Corollary 2.14]{2021arXiv210603338O} (the definition of tubes and incidences in that paper is slightly different, but it is not hard to deduce the version we use here from either the statement or the proof of \cite[Corollary 2.14]{2021arXiv210603338O}). This concludes the proof of Proposition \ref{prop3}. \end{proof}

\bibliographystyle{plain}
\bibliography{references}

\end{document}